\newcommand{\lp}[1]{\ensuremath{\ell_{#1}}\xspace} 
\newcommand{\lz}{\ensuremath{\ell_0}\xspace} 
\newcommand{\lo}{\ensuremath{\ell_1}\xspace} 
\newcommand{\lzx}[1]{\ensuremath{\ell_0(#1)}\xspace} 
\newcommand{\lpx}[2]{\ensuremath{\ell_{#1}(#2)}\xspace} 
\newcommand{\lpxp}[2]{\ensuremath{\ell_{#1}^{#1}(#2)}\xspace} 
\newcommand{\spoq}{\ensuremath{\text{SPOQ}}\xspace} 
\newcommand{\celo}{\ensuremath{\mathrm{CEL}0}\xspace} 
\newcommand\allx{}
\newcommand\allxx{}
\newcommand{\lda}[1]{#1}
\newcommand{\ldr}[2]{#1}
\newcommand{\ldd}[2]{#2}
\newcommand{\afa}[1]{#1}
\newcommand{\afr}[2]{#1}
\newcommand{\afd}[2]{#2}
\newcommand{\eca}[1]{#1}
\newcommand{\rlda}[1]{\added[id=rld]{#1}}
\newcommand{\cred}{\textcolor{black}}
\newcommand{\minimize}[2]{\ensuremath{\underset{\substack{{#1}}}%
{\mathrm{minimize}}\;\;#2 }}
\newcommand{\argmin}[2]{\ensuremath{\underset{\substack{{#1}}}%
{\mathrm{argmin}}\;\;#2 }}
\newcommand\ex{\ensuremath{\mathsf{\textbf{x}}}}
\newcommand\ey{\ensuremath{\mathsf{\textbf{y}}}}
\newcommand\ez{\ensuremath{\mathsf{\textbf{z}}}}
\newcommand\er{\ensuremath{\mathsf{\textbf{r}}}}
\newcommand\eb{\ensuremath{\mathsf{\textbf{b}}}}
\newcommand\ed{\ensuremath{\mathsf{\textbf{d}}}}
\newcommand\eI{\ensuremath{\pmb{I}}}
\newcommand\eD{\ensuremath{\pmb{D}}}
\newcommand\BB{\ensuremath{\mathcal{B}}}
\newcommand\eA{\ensuremath{\pmb{A}}}
\newcommand\zerob{\ensuremath{\pmb{0}}}
\newcommand\eR{\ensuremath{\mathbb{R}}}
\newcommand\eN{\ensuremath{\mathbb{N}}}
\newcommand\eC{\ensuremath{\mathcal{C}}}
\newcommand{\sign}{\ensuremath{\operatorname{sign}}} 
\newcommand\DD{\ensuremath{\operatorname{Diag}}}
\newcommand{\prox}{\ensuremath{\operatorname{prox}}}
\theoremstyle{definition}
\newtheorem{theorem}{Theorem}
\newtheorem{lemma}{Lemma}
\newtheorem{proposition}{Proposition}
\newtheorem{remark}{Remark}
\newtheorem{assumption}{Assumption} 
\begin{document}
\title{{SPOQ}  $\ell_p$-over-$\ell_q$  Regularization for Sparse Signal Recovery applied to Mass Spectrometry}
\author{Afef Cherni,~\IEEEmembership{Member,~IEEE,}
        Emilie Chouzenoux,~\IEEEmembership{Senior member,~IEEE,}
        Laurent Duval,~\IEEEmembership{Member,~IEEE,}      
        and~Jean-Christophe Pesquet,~\IEEEmembership{Fellow,~IEEE}
\thanks{A. Cherni is with the Department
of Aix-Marseille Univ, CNRS, Centrale Marseille, I2M, Marseille, France.}
\thanks{E. Chouzenoux and J.-C. Pesquet are with Universit\'e Paris-Saclay, CentraleSup\'elec, Inria, Centre de Vision Num\'erique, 91190 Gif-sur-Yvette, France.}
\thanks{L. Duval is with ESIEE Paris, University Gustave Eiffel, LIGM, Noisy-le-Grand, and IFP Energies nouvelles, Rueil-Malmaison, France.}
\thanks{This work was partly supported by the Excellence Initiative of Aix-Marseille University --- A*Midex, a French "Investissements d'Avenir" program, the Agence Nationale de la Recherche of France under MAJIC (ANR-17-CE40-0004-01) project, and the Institut Universitaire de France}
}
\markboth{Journal of \LaTeX\ Class Files,~Vol.~XX, No.~XX, Month~XXXX}%
{Shell \MakeLowercase{\textit{et al.}}: Bare Demo of IEEEtran.cls for IEEE Journals}

\maketitle

\begin{abstract}
Underdetermined or ill-posed inverse problems require additional information for \ldd{d} sound solutions with tractable optimization algorithms. Sparsity yields consequent  heuristics to that matter, with numerous applications  in signal restoration,  image recovery, or machine learning. Since the $\ell_0$ count measure is barely tractable, many statistical or learning approaches have invested in computable proxies, such as the $\ell_1$ norm. However, the latter does  not exhibit  the desirable property of  scale invariance for sparse data. \cred{Extending} the SOOT Euclidean/Taxicab \cred{$\ell_1$-over-$\ell_2$} norm-ratio initially introduced for blind deconvolution, we propose SPOQ, a family of smoothed \cred{ (approximately)} scale-invariant penalty functions. It consists of a Lipschitz-differentiable surrogate for $\ell_p$-over-$\ell_q$ quasi-norm/norm ratios with $p\in\,]0,2[$ and $q\ge 2$. This surrogate is embedded into a novel majorize-minimize trust-region approach, generalizing the variable metric forward-backward algorithm. For naturally sparse mass-spectrometry  signals, we show that SPOQ significantly outperforms  $\ell_0$, $\ell_1$, Cauchy, Welsch, \cred{SCAD} and \celo penalties on several performance measures. 
Guidelines on SPOQ hyperparameters tuning are also provided, suggesting simple data-driven choices.
\end{abstract}

\begin{IEEEkeywords}
Inverse problems, majorize-minimize method, mass spectrometry, nonconvex optimization, nonsmooth optimization, norm ratio, quasinorm, sparsity.
\end{IEEEkeywords}
 
%
\IEEEpeerreviewmaketitle



\section{Introduction and background\label{Introduction}}

\subsection{\cred{Measuring and optimizing sparsity in} data science}
\cred{Sparsity classically refers to a heuristic principle \cite{Laird_J_1919_j-monist_law_p,Bontly_T_2005_j-mind-lang_modified_orppawm}: simplest representations or models may be better (for representation, interpretation, inference, etc.)}\footnote{\cred{The law of parsimony is used  in theology, history, social and empirical disciplines; Occam's razor after  William of Ockham:  ``Entities should not be multiplied without necessity" }.}.
%
%
\cred{In data sciences, it can qualify} degrees of freedom for parametric models,  reduce a search space,  define stopping criteria,  bound  filter support,  simplify signals or images with  meaningful structures. \cred{Sparse signal recovery amounts to restoring data from inherently sparse information (e.g., spiking neurons, analytical chemistry) degraded by smoothing kernels and noise}. From partial observations, it  \cred{selects one solution}, among all potential \cred{consistent model candidates}. \cred{We review here precise formulations for sparsity measures, especially used in deconvolution, restoration or recovery. Starting with norm and quasinorm based regularizations, we will then address penalties made of ratios of those, that were recently proposed to enforce (near) scale-invariance and that will be at the core of our paper.}

The natural playground for sparsity in discrete time series analysis is the space   of  almost-zero  real sequences:  \cred{$c_{00}(\eR)  = \{ \ex=(x_n)_{n\in \eN} \;| \; \exists N \in \eN$ such that $ x_n=0\rlda{,}\;  \forall n \geq N\}$}. It is closed under finite addition and convolution \cite[p. 597]{Bronshtein_I_2007_book_handbook_m},  \cite[Chapter 2]{Albiac_F_2006_book_topic_bst}. Unless stated otherwise, in the following, we  consider sparse finite sequences  (hence, in $c_{00}(\eR)$), each being associated with a vector $\ex=(x_n)_{1 \leq n \leq N} \in \eR^N$. 
The cornerstone measure of parsimony is 
the count index,
i.e. the number of non-zero terms in $\ex$,  denoted by  $\lzx{\ex}$. 
It is also called \emph{cardinality function},  \emph{numerosity measure},  \emph{parsimony}, or \emph{sparsity}. For $p\in\,]0,+\infty[$, we define 
\cred{$\lpxp{p}{\ex} =  \sum_{n=1}^N|x_n|^p $}, which is a norm for $p \ge 1$. 
For quasinorms $\ell_p$, $p <1$, a  \cred{weaker triangle inequality holds:} $\| \ex+\ey \| \le K \left(\| \ex\|+\| \ey\| \right)$ with\footnote{The lowest $K$, modulus of concavity of the quasinorm,  saturates to $1$ for norms. For $0<p\le1$,  $\ell_p(\ex+\ey) \le 2^{\frac{1-p}{p}}  \big(\ell_p(\ex)+\ell_p(\ey)\big)$.}  $K\in\, [1,+\infty[$.
The \lp{p} quasinorm ($p<1$) is sometimes called $p$-norm-like   \cite{Rao_B_1999_j-ieee-tsp_affine_smbbs}.
\lz is  piecewise constant, nonsmooth and nonconvex.  
It is often considered as unusable for data optimization  in large linear systems,\footnote{Other denominations are \emph{subset selection}, \emph{minimum weight solution}, \emph{sparse null-space}, or \emph{minimum set cover}.}
since it leads to NP-hard  problems \cite{Natarajan_B_1995_j-siam-j-comput_sparse_asls,Fortnow_L_2009_j-comm-acm_status_pvnpp}. Under  drastic conditions, an $\lz$-problem  can \ldd{however} be solved exactly using a convex relation by surrogate penalties, like the \lo-norm \cite{Candes_E_2006_j-comm-pure-appl-math_stable_sriim}. 
\cred{Such conditions are rarely met in practice. The } use of the  \lo-norm yields approximate solutions and becomes more heuristic \cite{Ramirez_C_2013_j-uncertain-syst_why_l1gal0ge}.
Mixed-integer programming  reformulations using  branch-and-bound methods  \cite{BenMhenni_R_2019_PREPRINT_global_osslsp} are possible, \cred{yet often restricted to} relatively small-sized problems.

Norm- or quasinorm-based penalties have subsequently played an important role in sparse data processing or parsimonious modeling for \cred{high-dimensional} regression. Squared Euclidean norm   $\ell_2^2$  possesses efficient implementations  but often heavily degrades data sharpness. As a data fidelity term, the $\ell_2^2$ cost function alone cannot address, at the same time,  residual noise properties and additional data assumptions. 
It can be supplemented by various  variational regularizations, \emph{\`a la Tikhonov} \cite{Tikhonov_A_1963_j-dan-sssr_sol_ipmr}. Those act on the composition of  data with a well-suited sparsifying operator, e.g.  identity, gradients, higher-order derivatives, or wavelet frames \cite{Jacques_L_2011_j-sp_panorama_mgrisdfs}. The $\ell_2^2$  penalty case corresponds to ridge regression \cite{Hoerl_A_1970_j-technometrics_ridge_ranp}. In  basis pursuit \cite{Chen_S_1998_j-siam-j-sci-comput_atomic_dbp}, feature selection \cite{Bradley_P_1998_p-icml_feature_scmsvm}, or \emph{lasso} method (least absolute shrinkage and selection operator \cite{Tibshirani_R_1996_j-r-stat-soc-b-stat-methodol_regression_sslasso}), the one-norm  \lo or taxicab distance is  preferred, as it promotes a form of sparsity\ldd{ on the solution}. Solutions to the "$\ell_2^2$ fidelity plus $\ell_1$" regularization problem are related to total variation regularization in  image restoration \cite{Fu_H_2006_j-siam-j-sci-comput_efficient_mmml2l1l1l1nir}. \cred{It can be combined with higher-order derivatives for trend filtering and source separation in analytical chemistry} \cite{Ning_X_2014_j-chemometr-intell-lab-syst_chromatogram_bedusbeads}.
A convex combination of ridge and  lasso  regularizations yields  the elastic net regularization \cite{Zou_H_2005_j-r-stat-soc-b-stat-methodol_regularization_vsen,DeMol_C_2009_j-complexity_elastice-net_rlt}.
Other convex $p$-norms ($p \ge 1$) regularizations have been addressed, as in  bridge regression which interpolates between lasso and ridge \cite{Fu_W_1998_j-comput-graph-stat_penalized_rbvl}.
Expecting sparser solutions in practice, non-convex  least-squares plus $\ell_p$ ($p < 1$) problems have been addressed   \cite{Foucart_S_2009_j-acha_j-acha_sparsest_sulsvlqm0q1}. \cred{Other non-convex metrics have also been used in sparse regression and variable selection \cite{Soubies_E_2016_PREPRINT_unified_ecpl2l0m,Wen_F_2018_j-ieee-access_survey_nrbslrrspsml}}.
Although \emph{a priori} appealing for sparse data restoration, such problems retain NP-hard complexities \cite{Chen_X_2012_j-math-programm_complexity_ul2lpm}.
Another caveat of the above  norm/quasinorm penalties, as  proxies for reasonable  approximations to \lz, is their scale-variance:  norms and quasinorms satisfy the absolute homogeneity axiom ($\lpx{p}{\lambda\ex}=|\lambda|\lpx{p}{\ex}$, for $\lambda\in \eR $). 

\subsection{Penalties with quasinorm and norm ratios}
\cred{Better scale-invariance can be obtained by a proper normalization of quasinorms or norms penalties.}
We thus investigate a broader family of (quasi-)norm  ratios \cred{$\ell_p$-over-$\ell_q$}  with couples $(p,q)\in ]0,2[ \times [2,\infty[$, based on both their counting properties and probabilistic interpretation. First,  we have   equivalence relations in finite dimension:
\begin{equation}
\lpx{q}{\ex} \leq \lpx{p}{\ex}  \leq \lzx{\ex}^{\frac{1}{p}-\frac{1}{q}}\lpx{q}{\ex} 
\label{eq_lp-lq}
\end{equation}
with  $ p \leq q$, from  the standard power-mean inequality \cite{Beckenbach_E_1946_j-am-math-mon_inequality_j} implying classical $\ell_p$-space embeddings and  generalized  Rogers-H\"older's inequalities.
The LHS in \eqref{eq_lp-lq} is attained when $\ex$  \cred{is a sparsest signal in $c_{00}(\eR)$} 
 with \cred{a single}  non-zero component. The RHS is reached by a maximally non-sparse $\ex$, with \cred{all  samples  set} to a non-zero constant \cite{Hurley_N_2009_j-ieee-tit_comparing_ms}.
Thus,  \cred{$\ell_p$-over-$\ell_q$} quasinorm-ratios provide  interesting proxies  for a sparseness measure of $\ex$, to  quantify how much the ``action'' or ``energy'' of a discrete signal  is concentrated into only a few of its components \cred{ (\emph{cf.} Figure \ref{Fig:Testsparsitydegree})}.
They are invariant under  integer (circular) shift or sample shuffling in the sequence, and under  non-zero scale change (or $0$-degree homogeneity).
Those ratios are sometimes termed $pq$-means. 

For every $p \in ]0,2[$ and $q \in [2, +\infty[$, we thus define:
\begin{equation}
\left(\ell_p / \ell_q (\ex) \right)^{p} = \sum_{n =1}^N \left(\frac{|x_n|^q}{\sum_{n' =1}^N |x_{n'}|^q} \right)^{p/q} \,.
\label{eqlplqexact}
\end{equation}
\cred{Interpreting} the inner term 
\begin{equation}
p_n = \frac{|x_n|^q}{\sum_{n' =1}^N |x_{n'}|^q}
\end{equation}
 as a \cred{unitless} discrete probability distribution,  \cred{$\ell_p$-over-$\ell_q$ quasinorm-ratio} rewrites as an increasing function ($u \to u^{1/p}$) of a sum of concave functions ($u \to u^{p/q}$ when $ p \leq q$) of probabilities. The minimization of such an additive  information  cost  function 
\cite{Coifman_R_1992_j-ieee-tit_entropy-bsed_abbs,Sikic_H_2001_j-acha_information_cf}, 
a special case of Schur-concave functionals \cite{Aczel_J_1975_book_measures_ic,Marshall_A_1979_book_inequalities_tma}, is used for instance in best basis selection \cite{Leporini_D_1999_incoll_best_brbpsmbiwbm}.
Thus, special cases of  \cred{$\ell_p$-over-$\ell_q$} quasinorm ratios have served as sparsity-inducing penalties in the long history of blind signal deconvolution or image deblurring, as stopping criteria (for instance in NMF, non-negative matrix factorization \cite{Hoyer_P_2004_j-mach-learn-res_non-negative_mfsc}),  measures of sparsity satisfying a number of sound parsimony-prone axioms \cite{Hurley_N_2009_j-ieee-tit_comparing_ms}, estimates for time-frequency spread or concentration \cite{Ricaud_B_2014_j-adv-comput-math_survey_upsspa}. 
The $\ell_p$ quasinorm weighted by $\ell_2$ is considered as a  ``possible sparseness criterion'' in \cite{Bronstein_A_2005_j-int-j-imag-syst-tech_sparse_icabstri} when scaled with the  normalizing factor $N^{\frac{1}{p}-\frac{1}{2}}$.  It bears close connection with the kurtosis \cite{Fisher_R_1930_j-proc-lond-math-soc_moments_pmsd} for centered distributions and central moments, widely used in sparse sensory coding \cite{Field_D_1994_j-neural-comput_what_gsc} or adaptive filtering \cite{Tanrikulu_O_1994_j-elec-letters_least-mean_knhosbafa}.

The most frequent one with  $(p,q) = (1,2)$ is used \cite{Krishnan_D_2011_p-cvpr_blind_dnsm,Repetti_A_2015_j-ieee-spl_euclid_tsbdsl1l2r} as a surrogate to $\ell_0$ \cite{NoseFilho_K_2014_p-eusipco_sparse_bdbsisl0n,Li_Y_2016_j-ieee-tit_identifiability_bdssc}. \cred{The work \cite{Repetti_A_2015_j-ieee-spl_euclid_tsbdsl1l2r} (SOOT: Smoothed \cred{$\ell_1$-over-$\ell_2$ norm} ratio) proposed an efficient optimization algorithm to address it with theoretically guaranteed convergence, with application to seismic signal retrieval.} This ratio was also used to enhance lasso recovery on graphs \cite{Bresson_X_2015_p-eusipco_enhanced_lrg}. Its early history includes the "minimum entropy deconvolution'' proposed in \cite{Wiggins_R_1978_j-geoexploration_minimum_ed}, where the "varimax norm", akin to  kurtosis $\left(\ell_4 / \ell_2 \right)^4$, is maximized to yield visually simpler (spikier) signals. It was inspired by simplicity measures proposed in factor analysis \cite{Ferguson_G_1954_j-psychometrika_concept_pfa}, and meant to improve one of the earliest mentioned $\ell_0$ regularization \cite{Claerbout_J_1973_j-geophysics_robust_med}  in seismic. The  relationship with the concept of entropy was  explained later  \cite{Donoho_D_1981_p-atsas_minimum_ed}. It was generalized to the so-called "variable norm deconvolution" by maximizing $\left(\ell_q / \ell_2 \right)^q$ \cite{Gray_W_1978_tr_variable_nd}.  Note that  techniques in \cite{Wiggins_R_1978_j-geoexploration_minimum_ed,Gray_W_1978_tr_variable_nd} are relatively rudimentary. They aim at finding some inverse filter that maximizes a given contrast. They do not explicitly take into account noise statistics. Even more, the  deconvolved  estimate is  linearly obtained from observations, see \cite{Castella_M_2010_incoll_convolutive_m} for an overview.
Recently,  \cite{Demanet_L_2014_j-inf-inference_scaling_lrses} uses \cred{$\ell_1$-over-$\ell_{\infty}$} for sparse  recovery, and \cite{Chang_X_2018_j-stat-sin_sparse_kmlil0phddc} \cred{$\ell_{\infty}$-over-$\ell_0$} for cardinality-penalized clustering. The family of entropy-based sparsity measures $\left(\ell_q/\ell_1 \right)^{\frac{q}{1-q}}$ \cite{Lopes_M_2016_j-ieee-tit_unknown_scsdi} (termed $q$-ratio sparsity level in \cite{Zhou_Z_2019_j-sp_sparse_rbqrcmsv}), extends a previous work on squared \cred{$\ell_1$-over-$\ell_2$} ratios \cite{Lopes_M_2013_p-icml_estimating_uscs} for compressed sensing. Finally, \cite{Yu_Y_2016_j-neurocomputing_concave-convex_nrbdmfabd} proposes an extension of \cite{Krishnan_D_2011_p-cvpr_blind_dnsm} to an  \cred{$\ell_q$-over-$\ell_2$} ratio to discriminate between sharp and blurry images, and \cite{Jia_X_2018_j-mech-syst-sp_sparse_fglplqnacmrm,Jia_X_2017_j-sp_geometrical_iglplqnbd} use a norm ratio for the purpose of impulsive signature enhancement in sparse filtering, still without rigorous convergence proofs. 


\subsection{Contribution and outline}
Our main contribution resides in providing a \cred{new family} of smooth-enough surrogates to \lz with sufficient Lipschitz regularity. The resulting penalties, called SPOQ, \cred{Smoothed $\ell_p$-over-$\ell_q$}, extend the \cred{SOOT, smoothed $\ell_1$-over-$ \ell_2$} \cred{ratio penalty from} \cite{Repetti_A_2015_j-ieee-spl_euclid_tsbdsl1l2r}. A novel trust-region algorithm \ldd{which} generalizes and improves the variable metric forward-backward algorithm from \cite{Chouzenoux_E_2014_j-optim-theory-appl_variable_mfbamsdfcf} \cred{is then introduced, to minimize the sum of the SPOQ penalty and a non-necessarily smooth convex term}. \cred{The convergence of our algorithm to a critical point of the problem is shown under mild technical assumptions.} \cred{We then show the applicability of our method on the practical problem of sparse signal retrieval arising in mass spectrometry data analysis. In particular, we illustrate the great ability of SPOQ penalty to retrieve accurately the sought spike position and intensities, when compared to a bunch of state-of-the-art sparse recovery methods.}  
Section~\ref{Introduction} recalled the parsimony role and introduces sparsity measures. Section~\ref{ProposedFormulation} describes the observation model and the proposed SPOQ quasinorm-norm 
ratio regularization. We then derive our trust-region minimization algorithm and analyze its convergence in Section~\ref{MinimizationAlgorithm}. Section~\ref{Application} illustrates the good performance of SPOQ regularization \cred{in recovering} ``naturally sparse" mass spectrometry signals, \cred{as compared to several} sparsity penalties \cred{for restoration and recovery}.

\section{Proposed formulation\label{ProposedFormulation}}

\subsection{Sparse signal reconstruction}
Let us consider the observation model
\begin{equation}
\label{eq:pbinverse}
\ey = \eD \ex + \eb
\end{equation}
where $\ey = (y_m)_{1 \leq m \leq M} \in \eR^M$ represents the degraded measurements related to the original signal $\ex=(x_n)_{1 \leq n \leq N} \in \eR^N$ through the observation matrix $\eD \in \eR^{M \times N}$.
Hereabove, $ \eb\in \eR^M$ models additive acquisition noise. In this work, we focus on the inverse problem aiming at recovering signal $\ex$ from $\ey$ and $\eD$, under the assumption that the sought signal is sparse, i.e., has few non-zero entries. 
A direct (pseudo) inversion of $\eD$ generally yields poor-quality solutions,  because of noise and  the ill-conditioning of $\eD$. \cred{More suitable is} a penalized approach, which defines an estimate $\hat{\ex} \in \eR^N$ of $\ex$ as a solution of the constrained minimization problem
\begin{equation}
\label{eq:pbminimzation1}
\minimize{\ex \in \eC} {\cred{\Psi(\ex)} + \Theta(\ex)},
\end{equation}
where $\eC$ is a non-empty convex and compact subset of $\eR^N$. Moreover, function $\Theta: \eR^N \to ]-\infty, +\infty]$ is a \cred{non-necessarily smooth convex} data fidelity function measuring the discrepancy between the observation and the model. One can define $\Theta$ as the least-squares \cred{function $\zeta : \ex \mapsto \xi\| \eD \ex -\ey \|^2$, } or adopt an interesting constrained formulation, by setting
\begin{equation}
(\forall \ex \in \eR^N) \quad
\Theta(\ex) =  \iota_{\mathcal{B}^\ey_\xi}(\eD\ex).
\label{eq:phidef}
\end{equation}
Hereabove, $\xi >0$ is a parameter depending on noise characteristics, $\mathcal{B}^\ey_\xi$ is the Euclidean ball centered at $\ey$ with radius $\xi$, and $\iota_{\mathcal{S}}$ denotes the indicator function of a set $\mathcal{S}$, equal to zero for $\ex \in \mathcal{S}$, and $+ \infty$ otherwise. Furthermore, function $\Psi$ \cred{defined from} $\eR^N \to ]-\infty, +\infty]$ is a regularization function used to enforce desirable properties on the solution. The choice of \ldd{the regularization function} $\Psi$ is essential for reaching satisfying results \cred{by promoting desirable properties in the sought signal.} When sparsity is expected, the $\ell_1$ norm is probably the most used regularization function. It is  a convex envelope proxy to \lz, a key feature for deriving  efficient minimizations to solve \eqref{eq:pbminimzation1}. However, because it is not scale invariant, the $\ell_1$ penalty can lead to an under-estimation bias of signal amplitudes, which is detrimental to the quality of the solution. In this work, we propose a new regularization strategy, relying on the \cred{$\ell_p$-over-$\ell_q$} norm ratio, aiming at limiting  scale ambiguity in the estimation. 

\subsection{Proposed SPOQ penalty}
Let $p \in \, ]0, 2[$ and $q \in [2, +\infty[$. We first define two smoothed approximations to $\ell_p$ and $\ell_q$ parametrized by constants $(\alpha, \eta)\in ]0,+\infty[^2$:
for every $\ex= (x_{n})_{1\le n \le N}\in \mathbb{R}^{N}$,
\begin{equation}
\allx
\ell_{p,\alpha}(\ex) = \left(\sum_{n=1}^N \left(\left(x_n^2 + \alpha^2\right)^{p/2}  - \alpha^p \right)\right)^{1/p}
\label{eq:lpsmooth}
\end{equation}
and
\begin{equation}
\allx
\ell_{q,\eta}(\ex) = \left( \eta^q + \sum_{n=1}^N |x_n|^q \right)^{1/q}.
\end{equation}
Remark that the traditional $p$ and $q$ (quasi-)norms are recovered for $\alpha = \eta = 0$. Our Smoothed  \cred{$\ell_p$-over-$\ell_q$} (SPOQ) penalty is then defined as the following function:
\begin{equation}
\allx
\Psi(\ex) = \log \left( \dfrac{(\ell_{p,\alpha}^p(\ex) + \beta^p)^{1/p}}{\ell_{q,\eta}(\ex) } \right).
\label{eq:reg_sparse} 
\end{equation}
Parameter $ \beta \in ]0,+\infty[$ is introduced to account for the fact that the $\log$ function is not defined at $0$. It is worth  noting that the proposed function \eqref{eq:reg_sparse}  combines both the sparsity promotion effect of the non-convex logarithmic loss \cite{Mazumder_R_2011_j-asa_sparsenet_cdnp,Selesnick_I_2014_j-ieee-tsp_sparse_semsco} and of the \cred{$\ell_p$-over-$\ell_q$} ratio. It generalizes the Euclidean/Taxicab Smoothed \cred{$\ell_1$-over-$\ell_2$} (SOOT) penalty \cite{Repetti_A_2015_j-ieee-spl_euclid_tsbdsl1l2r}, recovered for $p=1$ and $q=2$. Figure \ref{Fig:sparsity-promoting} illustrates the shape of the SPOQ penalty in the case $N=2$, for $p=1$ and $q=2$ (i.e., SOOT), and $p=1/4$ and $q=2$, in comparison with $\ell_0$, $\ell_1$, \cred{SCAD, and \celo}. It is worth noticing that, on the second row, the logarithm sharpens the \cred{$\ell_1$-over-$\ell_2$} behavior toward  \lz. By choosing $p=1/4$,  \spoq further enhances the folds along the axes. As a result, the bottom-right picture best mimics the top-left \lz representation.

\begin{figure}[htb]
\centering
\includegraphics[width=8cm]{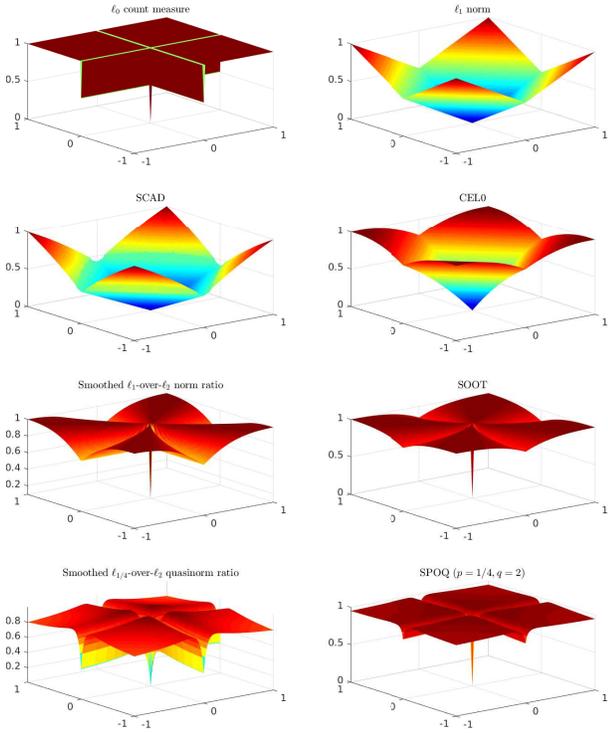}
\caption{Sparsity-promoting penalties, scaled to $[0,\,1]$. From top to bottom and from left to right: $\ell_0$, $\ell_1$, \cred{SCAD (with $\delta=2.5, a=1.1$), \celo (with $\delta=1$)}, smoothed \cred{$\ell_1$-over-$\ell_2$}, SOOT, smoothed \cred{$\ell_p$-over-$\ell_q$} and SPOQ with $(\alpha,\beta,\eta)=(\num{ 7e-7},\num{ 3e-3},\num{ 1e-1})$.}
\label{Fig:sparsity-promoting}
\end{figure}

\subsection{Mathematical properties}
We present here several properties of the proposed SPOQ penalty, that will be essential for deriving an efficient optimization algorithm to solve Problem~\eqref{eq:pbminimzation1}.

\subsubsection{Gradient and Hessian}
Let us express the gradient and Hessian matrices of function $\Psi$ at $\ex \in \eR^N$:
\begin{equation}
\begin{cases}
\nabla \ell_{q,\eta}^q(\ex) = q\, \left(\sign(x_n)|x_n|^{q-1}\right)_{1 \leq n \leq N} \\  
\nabla^2 \ell_{q,\eta}^q(\ex) = q(q-1)\,\DD{\left((|x_n|^{q-2})_{1 \leq n \leq N}\right)}
\end{cases}
\label{e:D1ellqx}
\end{equation}
and
\begin{equation}
\begin{cases}
\nabla \ell_{p,\alpha}^p(\ex) = p \, \left( x_n (x_n^2 + \alpha^2)^{\frac{p}{2}-1}\right)_{1\leq n \leq N} \\
\nabla^2 \ell_{p,\alpha}^p(\ex) =  p \,\DD \left((\left((p-1)x_n^2+\alpha^2\right) \right.\\
\left. \qquad \qquad \qquad\qquad\;\;\times (x_n^2+\alpha^2)^{\frac{p}{2}-2})_{1\le n \le N}\right),
\label{e:D2ellpx}
\end{cases}
\end{equation}
with the notation $\sign(x) = 0$ for $x = 0$, $-1$ for $x<0$ and $+1$ for $x>0$. It is worth noting that one can decompose the SPOQ penalty under the form 
\begin{equation}
\allx
 \Psi(\ex) = \Psi_1(\ex) - \Psi_2(\ex),
\end{equation}
by setting
\begin{equation}
\allx
\Psi_1(\ex) = \frac{1}{p} \log \left( \ell_{p,\alpha}^p(\ex) + \beta^p \right), 
\end{equation}
and
\begin{equation}
\allx
\Psi_2(\ex) = \frac{1}{q}  \log \left( \ell_{q,\eta}^{q}(\ex) \right).
\end{equation} 
Hence, $\nabla \Psi = \nabla \Psi_1 - \nabla \Psi_2$, and $\nabla^2 \Psi = \nabla^2 \Psi_1 - \nabla^2 \Psi_2$, with
\begin{align}
\nabla \Psi_1(\ex) & = \frac{1}{p} \frac{\nabla \ell_{p,\alpha}^p(\ex)}{\ell_{p,\alpha}^p(\ex)+\beta^p} ,
\label{e:Dphi1}
\\
\nabla \Psi_2(\ex) & = \frac{1}{q} \frac{\nabla \ell_{q,\eta}^q(\ex)}{\ell_{q,\eta}^q(\ex)}, 
\label{e:Dphi2}
\\
p\nabla^2 \Psi_1(\ex) &=  \frac{\nabla^2 \ell_{p,\alpha}^p(\ex)}{ \ell_{p,\alpha}^p(\ex)+\beta^p} -  \frac{ \nabla \ell_{p,\alpha}^p(\ex)\left(\nabla \ell_{p,\alpha}^p(\ex)\right)^\top}{\left( \ell_{p,\alpha}^p(\ex)+\beta^p \right)^2},
\label{e:D2phi1}
\\
q\nabla^2 \Psi_2(\ex)  &=  \frac{\nabla^2 \ell_{q,\eta}^q(\ex)}{\ell_{q,\eta}^q(\ex)} - \frac{ \nabla \ell_{q,\eta}^q(\ex) \left(\nabla \ell_{q,\eta}^q(\ex) \right)^\top}{\ell_{q,\eta}^{2q}(\ex)}  .
\label{eq_phi-hessian1}
\end{align}

From the above\ldd{ expressions}, we derive the following proposition, stating that for suitable parameter choices, function $\Psi$ has $\zerob_N$, i.e. the zero vector of dimension $N$, as a minimizer, which is desirable for a sparsity promoting regularization function. 

\begin{proposition}\label{prop:minimiseur}
Assume that either $q = 2$ and $\eta^2 \alpha^{p-2} > \beta^p$, or $q > 2$. Then, $\nabla^2 \Psi(\zerob_N)$ is a positive definite matrix and $\zerob_N$ is a local minimizer of $\Psi$. In addition, if
\begin{equation}\label{e:simpcondglob}
\eta^2 \ge \beta^2\max\left\{\frac{8\alpha^{2-p}}{p(2+p)\beta^{2-p}},\frac{1}{(2^{p/2}-1)^{2/p}}\right\}
\end{equation}
then $\zerob_N$ is a global minimizer of $\Psi$.
\end{proposition}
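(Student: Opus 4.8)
The plan is to prove the two assertions separately: the local statement by a direct second-order computation at the origin, and the global statement by reducing it to a single scalar inequality that splits into the two regimes responsible for the two entries of the max.

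\emph{Local minimizer.} First I would verify that $\zerob_N$ is a critical point. From \eqref{e:Dphi1}--\eqref{e:Dphi2}, combined with \eqref{e:D1ellqx}--\eqref{e:D2ellpx}, both $\nabla\ell_{p,\alpha}^p(\zerob_N)$ and $\nabla\ell_{q,\eta}^q(\zerob_N)$ vanish (the latter because $q\ge 2>1$), so $\nabla\Psi(\zerob_N)=\zerob_N$. Next I would evaluate the Hessian via \eqref{e:D2phi1}--\eqref{eq_phi-hessian1}: the rank-one terms carry the vanishing gradients and drop out, leaving $\nabla^2\Psi_1(\zerob_N)=\frac{\alpha^{p-2}}{\beta^p}\,\eI$, while $\nabla^2\Psi_2(\zerob_N)=\frac{1}{\eta^2}\eI$ when $q=2$ and $\nabla^2\Psi_2(\zerob_N)=\zerob_N$ when $q>2$ (since $|x_n|^{q-2}\to 0$). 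Hence $\nabla^2\Psi(\zerob_N)$ equals $\frac{\alpha^{p-2}}{\beta^p}\eI$ for $q>2$ and $\big(\frac{\alpha^{p-2}}{\beta^p}-\frac{1}{\eta^2}\big)\eI$ for $q=2$, which is positive definite exactly under the stated sign conditions; a positive definite Hessian at a critical point gives the local minimizer claim.

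\emph{Reduction of the global claim.} Since $\Psi(\zerob_N)=\log(\beta/\eta)$, the global statement $\Psi(\ex)\ge\Psi(\zerob_N)$ is equivalent, after raising to suitable powers, to $\big(1+\ell_{p,\alpha}^p(\ex)/\beta^p\big)^{q/p}\ge 1+\sum_n|x_n|^q/\eta^q$. Writing $\ell_{p,\alpha}^p(\ex)=\sum_n A_n$ with $A_n=(x_n^2+\alpha^2)^{p/2}-\alpha^p\ge 0$, I would use that $t\mapsto(1+t/\beta^p)^{q/p}-1$ is convex with value $0$ at $t=0$, hence superadditive on $[0,+\infty[$; this bounds the left-hand side below by $\sum_n\big((1+A_n/\beta^p)^{q/p}-1\big)$ and reduces the whole inequality to the scalar inequality, for every $x\in\eR$,
\[
\Big(1+\tfrac{(x^2+\alpha^2)^{p/2}-\alpha^p}{\beta^p}\Big)^{q/p}\ge 1+\frac{|x|^q}{\eta^q}.
\]

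\emph{The scalar inequality, in two regimes.} Writing $\Phi(x)=(x^2+\alpha^2)^{p/2}-\alpha^p$, this is where the max originates. For $|x|\ge\alpha$, the bound $(1+v)^{q/p}\ge 1+v^{q/p}$ (valid since $q/p\ge 1$) with $v=|x|^p/\eta^p$ shows it suffices to prove $\eta^p\Phi(x)\ge\beta^p|x|^p$; since $|x|\mapsto\Phi(x)/|x|^p=(1+\alpha^2/x^2)^{p/2}-(\alpha/|x|)^p$ is increasing and equals $2^{p/2}-1$ at $|x|=\alpha$, this holds as soon as $\eta^p(2^{p/2}-1)\ge\beta^p$, i.e. the second entry of the max. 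For $|x|\le\alpha$, raising the scalar inequality to the power $p/q$ makes it equivalent to $1+\Phi(x)/\beta^p\ge(1+|x|^q/\eta^q)^{p/q}$, and I would bound the right side in a $q$-free way by $(1+|x|^q/\eta^q)^{p/q}\le(1+x^2/\eta^2)^{p/2}\le 1+x^2/\eta^2$, using $(1+u^{q/2})\le(1+u)^{q/2}$ (as $q/2\ge 1$) and $(1+w)^{p/2}\le 1+w$ (as $p/2\le 1$). It then suffices to show $\Phi(x)\ge\beta^p x^2/\eta^2$. A second-order expansion of $(1+x^2/\alpha^2)^{p/2}$, whose Maclaurin series alternates past the linear term, gives $\Phi(x)\ge\frac{p}{2}\alpha^{p-2}x^2-\frac{p(2-p)}{8}\alpha^{p-4}x^4$, and on $[0,\alpha]$ the estimate $x^4\le\alpha^2 x^2$ collapses this to $\Phi(x)\ge\frac{p(2+p)}{8}\alpha^{p-2}x^2$, so comparing coefficients yields precisely the first entry of the max.

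\emph{Main obstacle.} The derivative evaluations at the origin and the scalar algebra are routine; the genuine work lies in organizing the global inequality so that the two clean sufficient conditions emerge. The decisive structural choices are the superadditivity reduction to a single coordinate and the split point $|x|=\alpha$; the small-$|x|$ regime is the delicate one, since one must produce a quadratic lower bound on $\Phi$ with the exact constant $\frac{p(2+p)}{8}$ while suppressing the $q$-dependence through $(1+u^{q/2})\le(1+u)^{q/2}$. I expect keeping the direction of every power inequality correct — each hinging on the ordering $0<p<2\le q$ — to be the most error-prone part.
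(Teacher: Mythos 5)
Your proof is correct, and while the local part is identical to the paper's, your global argument is assembled differently. The paper works with the aggregated inequality in the variables $z_n=x_n^2$, partitions the index set into $I=\{n : z_n<\epsilon\alpha^2\}$ and its complement, applies superadditivity of $u\mapsto u^{2/p}$ to separate the two blocks, and then needs two further tools to recombine them: the power-mean embedding \eqref{eq_lp-lq} to pass from exponent $p/2$ to $q/2$ on the large coordinates, and the triangle inequality for the $\ell^{q/2}$ norm to merge the two resulting terms; the stated condition \eqref{e:simpcondglob} is the instance $\epsilon=1$ of a one-parameter family of sufficient conditions. You instead decouple the coordinates completely at the outset, via superadditivity of the single convex function $t\mapsto(1+t/\beta^p)^{q/p}-1$, reducing everything to a one-dimensional inequality split at $|x|=\alpha$, and you eliminate the $q$-dependence in the small regime by the elementary chain $1+u^{q/2}\le(1+u)^{q/2}$ and $(1+w)^{p/2}\le 1+w$. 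Notably, your two scalar estimates are exactly the paper's at $\epsilon=1$: your monotonicity of $\Phi(x)/|x|^p$ (value $2^{p/2}-1$ at $|x|=\alpha$) is the paper's claim that $\upsilon\mapsto((\upsilon+1)^{p/2}-1)/\upsilon^{p/2}$ is increasing, and your truncated-binomial bound $\Phi(x)\ge\frac{p(2+p)}{8}\alpha^{p-2}x^2$ on $|x|\le\alpha$ matches the paper's second-order mean value estimate at $\epsilon=1$ (your alternating-series justification could equally be replaced by Taylor--Lagrange, which gives the bound on all of $[0,+\infty[$). What each approach buys: yours is more modular and elementary --- no index partition, no quasinorm triangle inequality, and mixed-regime coordinates are handled independently --- whereas the paper's free parameter $\epsilon$ yields a tunable trade-off between the two entries of the max in \eqref{e:simpcondglob}, of which the stated condition is just one point. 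Your losses (e.g., $(1+w)^{p/2}\le 1+w$ rather than $1+\frac{p}{2}w$) are harmless since you recover precisely the constants of \eqref{e:simpcondglob}.
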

\begin{proof} See Appendix \ref{Proof0}.
 \end{proof}


\subsubsection{Majorization properties}
We now gather in the following proposition two properties that allow us to build quadratic surrogates for Function \eqref{eq:reg_sparse}. 
\begin{proposition} Let $\Psi$ be defined by \eqref{eq:reg_sparse}.\\
\label{proposPsi}
(i) $\Psi$ is a $L$-Lipschitz differentiable function on $\eR^N$, \textit{i.e}, for every $(\ex,\ex')\in (\mathbb{R}^{N})^{2}$,
\begin{equation}
\allxx
 \| \nabla \Psi(\ex) - \nabla \Psi(\ex') \| \leq L \|\ex-\ex'\|
\end{equation}
where 
\begin{equation}
\label{eq:LipSPOQ}
L= p\frac{\alpha^{p-2}}{\beta^p} +  \frac{p}{2\alpha^2} \max\Big\{1,\Big(\frac{N\alpha^p}{\beta^p}\Big)^{2}\Big\} + \frac{q-1}{\eta^2}.
\end{equation} 
In particular, 
\begin{multline}
\allxx
\Psi(\ex') \leq \Psi(\ex) + (\ex'-\ex)^\top \nabla \Psi(\ex) + \frac{L}{2} \|\ex'-\ex\|^2.
\end{multline}
(ii) For every $\rho \in [0, +\infty[$, define the $\ell_q$-ball complement:
\begin{equation}
\overline{\BB}_{q,\rho} = \{\ex = (x_{n})_{1\le n \le N} \in \eR^N \mid \sum_{n=1}^N |x_n|^q \ge \rho^q\}.\label{eq_outer-lq-ball}
\end{equation}
$\Psi$ admits a quadratic tangent majorant at every $\ex \in \overline{\BB}_{q,\rho}$, i.e.
\begin{multline}\label{e:majloc}
(\forall \, \ex'\in \overline{\BB}_{q,\rho} ) \quad  \Psi(\ex') \leq \Psi(\ex) +
(\ex'-\ex)^\top \nabla \Psi(\ex) \\ + \frac12 (\ex'-\ex)^\top \eA_{q, \rho}(\ex) (\ex'-\ex), 
\end{multline}
where
\begin{multline}
\eA_{q, \rho}(\ex) = \chi_{q,\rho}\, \boldsymbol{I}_N \\
+ \frac{1}{\ell_{p,\alpha}^p(\ex)+\beta^p} \DD{ \big((x_n^2+\alpha^2)^{p/2-1}\big)_{1\le n \le N}},
\label{eq:metriqueA}
\end{multline}
with
\begin{equation}
\chi_{q,\rho}=\frac{q-1}{(\eta^q+\rho^q)^{2/q}}.
\end{equation}
Moreover, for every $\ex \in \eR^N$,
\begin{equation}
\chi_{q,\rho} \, \eI_N \cred{\preceq} \eA_{q, \rho}(\ex) \cred{\preceq} (\chi_{q,\rho} + \beta^{-p} \alpha^{p-2}) \, \eI_N. \label{eq:maj_bnd}
\end{equation}
\end{proposition}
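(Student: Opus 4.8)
The plan is to bound the Hessian of $\Psi$ and then exploit the decomposition $\Psi=\Psi_1-\Psi_2$, treating the two halves with different majorization devices.

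For (i), since $\Psi$ is twice continuously differentiable, I would establish the $L$-Lipschitz property by bounding the spectral norm of $\nabla^2\Psi(\ex)$ uniformly in $\ex$, through $\nabla\Psi(\ex)-\nabla\Psi(\ex')=\int_0^1\nabla^2\Psi(\ex'+t(\ex-\ex'))(\ex-\ex')\,dt$. Inserting \eqref{e:D2phi1}--\eqref{eq_phi-hessian1}, each Hessian splits into a diagonal part (the Hessian of a smoothed power over that smoothed power) and a rank-one negative-semidefinite correction built from the associated gradient. The diagonal part of $\nabla^2\Psi_1$ is handled by studying the scalar map $t\mapsto((p-1)t+\alpha^2)(t+\alpha^2)^{p/2-2}$ on $[0,+\infty[$, whose modulus is maximal at $t=0$ with value $\alpha^{p-2}$; dividing by $\ell_{p,\alpha}^p(\ex)+\beta^p\ge\beta^p$ gives the $\alpha^{p-2}/\beta^p$ contribution. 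The rank-one term of $\Psi_1$ requires comparing $\sum_n x_n^2(x_n^2+\alpha^2)^{p-2}$ with $(\ell_{p,\alpha}^p(\ex)+\beta^p)^2$; the elementary bound $x_n^2(x_n^2+\alpha^2)^{p-2}\le\alpha^{p-2}(x_n^2+\alpha^2)^{p/2}$ reduces this to a one-variable maximization in $s=\ell_{p,\alpha}^p(\ex)$, producing the $\frac{1}{2\alpha^2}\max\{1,(N\alpha^p/\beta^p)^{2}\}$ factor, the $\max$ reflecting the two regimes of that maximization. For $\Psi_2$, the diagonal part has entries $\frac{(q-1)|x_n|^{q-2}}{\ell_{q,\eta}^q(\ex)}\le\frac{q-1}{\eta^2}$ via $|x_n|^{q-2}\le(\ell_{q,\eta}^q(\ex))^{1-2/q}$ and $\ell_{q,\eta}^q(\ex)\ge\eta^q$, while its rank-one correction carries a favorable sign. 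Collecting these estimates yields $L$, and the quadratic upper bound (the ``In particular'' inequality) is the standard descent lemma for $L$-Lipschitz gradients.

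For (ii), I would build the majorant additively. The $\Psi_1$ part admits a \emph{global} quadratic tangent majorant: concavity of $t\mapsto t^{p/2}$ gives $((x_n')^2+\alpha^2)^{p/2}\le(x_n^2+\alpha^2)^{p/2}+\frac{p}{2}(x_n^2+\alpha^2)^{p/2-1}((x_n')^2-x_n^2)$, and expanding $(x_n')^2-x_n^2=(x_n'-x_n)^2+2x_n(x_n'-x_n)$ converts this into a tangent quadratic for $\ell_{p,\alpha}^p$ with diagonal curvature $p(x_n^2+\alpha^2)^{p/2-1}$; composing with the concave-logarithm inequality $\log v'\le\log v+(v'-v)/v$ and dividing by $p$ reproduces exactly the $\frac{1}{\ell_{p,\alpha}^p(\ex)+\beta^p}\DD((x_n^2+\alpha^2)^{p/2-1})$ term of $\eA_{q,\rho}(\ex)$, valid for all $\ex'$.

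The remaining, and main, difficulty is the majorant of $-\Psi_2$ with constant curvature $\chi_{q,\rho}\eI_N$ over the \emph{$\ell_q$-ball complement}. I would first prove the pointwise bound $-\nabla^2\Psi_2(\ex)\preceq\frac{q-1}{(\eta^q+\sum_n|x_n|^q)^{2/q}}\eI_N$: applying Cauchy--Schwarz to the rank-one term in \eqref{eq_phi-hessian1} and using the norm monotonicity $\sum_n|x_n|^{2q-2}\le(\sum_n|x_n|^q)^{2-2/q}$ (which forces $q\ge2$) lets the negative diagonal part absorb enough of the rank-one part. On $\overline{\BB}_{q,\rho}$ one has $\sum_n|x_n|^q\ge\rho^q$, so this pointwise bound is $\preceq\chi_{q,\rho}\eI_N$. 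The subtle point, and the hardest part of the proof, is that $\overline{\BB}_{q,\rho}$ is \emph{not convex}: the pointwise Hessian bound cannot be integrated naively along the segment $[\ex,\ex']$, which may cross the excluded ball where the curvature is strictly larger than $\chi_{q,\rho}$. I would therefore derive the tangent majorant directly from the logarithmic inequality $\log\ell_{q,\eta}^q(\ex')\ge\log\ell_{q,\eta}^q(\ex)+(\ell_{q,\eta}^q(\ex')-\ell_{q,\eta}^q(\ex))/\ell_{q,\eta}^q(\ex')$ combined with the convexity of $\ell_{q,\eta}^q$, so that the endpoint constraint $\ell_{q,\eta}^q(\ex')\ge\eta^q+\rho^q$ is what compensates for the non-convexity. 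Summing the two majorants yields \eqref{e:majloc}, and the spectral estimate \eqref{eq:maj_bnd} is then immediate: $\eA_{q,\rho}(\ex)$ equals $\chi_{q,\rho}\eI_N$ plus a positive-semidefinite diagonal matrix whose entries are at most $\beta^{-p}\alpha^{p-2}$, since $(x_n^2+\alpha^2)^{p/2-1}\le\alpha^{p-2}$ (as $p<2$) and $\ell_{p,\alpha}^p(\ex)+\beta^p\ge\beta^p$.
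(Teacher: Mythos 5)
Your skeleton matches the paper's Appendix B: for (i) you bound the spectral norms of $\nabla^2\Psi_1$ and $\nabla^2\Psi_2$ through the diagonal-plus-rank-one splitting, and for (ii) you majorize $\ell_{p,\alpha}^p$ via concavity of $t\mapsto(t+\alpha^2)^{p/2}$ composed with the Napier inequality, exactly as the paper does. However, two of your steps do not deliver the constants actually claimed in the proposition. First, for the rank-one term of $\nabla^2\Psi_1$, your elementary bound $x_n^2(x_n^2+\alpha^2)^{p-2}\le\alpha^{p-2}(x_n^2+\alpha^2)^{p/2}$ produces a numerator \emph{linear} in $\nu=\sum_n(x_n^2+\alpha^2)^{p/2}$, and the one-variable maximization of $\alpha^{p-2}\nu/(\nu-N\alpha^p+\beta^p)^2$ over $\nu\ge N\alpha^p$ then has, when $N\alpha^p<\beta^p$, an interior maximum equal to $\alpha^{p-2}/\big(4(\beta^p-N\alpha^p)\big)$, which blows up as $N\alpha^p\uparrow\beta^p$ and so cannot be dominated by the claimed factor $\frac{1}{2\alpha^2}\max\{1,(N\alpha^p/\beta^p)^2\}$ in \eqref{eq:LipSPOQ}. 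The paper avoids this by using $x_n^2/(x_n^2+\alpha^2)^2\le 1/(2\alpha^2)$ together with $\sum_n(x_n^2+\alpha^2)^p\le\nu^2$, so that the resulting ratio $\nu^2/(\nu-N\alpha^p+\beta^p)^2$ is monotone on each regime and bounded; your step, as written, fails to prove the stated $L$.

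Second, concerning the majorant of $-\Psi_2$ on $\overline{\BB}_{q,\rho}$: you are right that the non-convexity of $\overline{\BB}_{q,\rho}$ blocks naive integration of a pointwise Hessian bound, and it is worth noting that the paper itself glosses over exactly this point --- Appendix B just invokes ``the descent lemma'' with \eqref{e:majnormD2phi2pre}, even though the segment $[\ex,\ex']$ may cross the ball where $\frac{q-1}{(\ell_{q,\eta}^q)^{2/q}}$ exceeds $\chi_{q,\rho}$ (for $\ex'=-\ex$ with $\sum_n|x_n|^q=\rho^q$ and small $\eta$, integrating any bound of the form $c/(\ell_{q,\eta}^q)^{2/q}$ along the segment even diverges, while the claimed inequality still holds by cancellation). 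So your diagnosis is sharper than the paper's write-up. But your proposed repair is only asserted, and when executed it loses constants: with $a=\ell_{q,\eta}^q(\ex)$, $g=\nabla\ell_{q,\eta}^q(\ex)$, $d=\ex'-\ex$ and $s=d^\top g\ge0$, the log inequality plus convexity give $\Psi_2(\ex)-\Psi_2(\ex')+d^\top\nabla\Psi_2(\ex)\le\frac1q\big(\frac sa-\log(1+\frac sa)\big)\le\frac{s^2}{2qa^2}$, and then $\|g\|^2\le q^2 a^{2-2/q}$ yields the curvature $\frac{q}{(\eta^q+\rho^q)^{2/q}}$ rather than $\chi_{q,\rho}=\frac{q-1}{(\eta^q+\rho^q)^{2/q}}$ (a similar factor loss, at least $2$ when $q=2$, appears on the other branch). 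As sketched, you therefore prove \eqref{e:majloc} only with a strictly larger metric than $\eA_{q,\rho}(\ex)$ of \eqref{eq:metriqueA}; recovering the exact $q-1$ requires retaining the cancellation between the rank-one and diagonal parts of $\nabla^2\Psi_2$ (the paper captures it pointwise via the factorization $\boldsymbol{\Lambda}_\epsilon^{1/2}(\eI_N-\mathbf{v}_\epsilon\mathbf{v}_\epsilon^\top)\boldsymbol{\Lambda}_\epsilon^{1/2}$ with $\|\mathbf{v}_\epsilon\|^2-1<1$), not just endpoint information. Your remaining claims --- the diagonal bound $\alpha^{p-2}/\beta^p$ at $t=0$, the pointwise estimate $-\nabla^2\Psi_2\preceq\frac{q-1}{(\ell_{q,\eta}^q)^{2/q}}\eI_N$ via Cauchy--Schwarz and $\sum_n|x_n|^{2q-2}\le(\sum_n|x_n|^q)^{2-2/q}$, and the spectral sandwich \eqref{eq:maj_bnd} --- are all sound and consistent with the paper.
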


\begin{proof}
See Appendix \ref{Proof1}.
\end{proof}

Proposition \ref{proposPsi}(i) leads to a rather simple majorizing function for $\Psi$, valid on the whole Euclidean space $\eR^N$. This extends our previous result established in \cite{Repetti_A_2015_j-ieee-spl_euclid_tsbdsl1l2r} for the particular case when $p=1$ and $q=2$. The majorization property presented in Proposition \ref{proposPsi}(ii) only holds in the non-convex set $\overline{\BB}_{q,\rho}$.
By limiting the size of the region where majorization is imposed, one may expect more accurate approximations for $\Psi$. This observation motivates the trust-region minimization algorithm \cred{that} we propose in the next section to solve Problem \eqref{eq:pbminimzation1}.

\section{Minimization algorithm}
\label{MinimizationAlgorithm}

\subsection{Preliminaries}

We first introduce some key notation and concepts. 
Problem \eqref{eq:pbminimzation1} can be rewritten equivalently as:
\begin{equation}
\label{eq:pbminimzation2}
\minimize{\ex \in \eR^N} {\Omega(\ex)}
\end{equation}
where $\Omega = \Psi + \Phi$ with $\Psi$ defined in \eqref{eq:reg_sparse} and $\Phi = \Theta + \iota_{\eC}$. We will assume that function $\Theta$ belongs to $\Gamma_0(\eR^N)$, the class of convex lower semi-continuous functions. This is for instance valid for the least-squares term as well as for function \eqref{eq:phidef}. Note that the assumptions made on set $\eC$ implies that $\Phi$ is coercive and belongs to $\Gamma_0(\eR^N)$.
The particular structure of $\Omega$, summing a Lipschitz differentiable function $\Psi$ and the non-necessarily smooth convex term $\Phi$ \cred{suits it well to} the class of variable metric forward-backward (VMFB) optimization methods \cite{Becker_S_2012_p-nips_quasi-newton_psm, Combettes_P_2014_j-optimization_variable_mfbsamid,Chouzenoux_E_2014_j-optim-theory-appl_variable_mfbamsdfcf,Repetti_A_2019_PREPRINT_variable_mfbacmp}.
In such methods, one alternates gradient steps on $\Psi$ and proximity steps on $\Phi$, preconditioned by a specific sequence of metric matrices. Let us recall that, for $\Phi \in \Gamma_0(\eR^N)$, and for a symmetric positive definite (SPD) \cred{metric matrix} $\eA \in \eR^{N \times N}$, the proximity operator of $\Phi$ at $\ex \in\eR^N$ relative to the metric \cred{matrix} $\eA$ is defined as
\begin{equation}
\label{eq:proxmetrique}
\prox_{\eA,\Phi}(\ex) = \argmin{\ez \in \eR^N} {\left(\frac{1}{2}\|\ez-\ex\|_{\eA}^2 + \Phi(\ez)\right)}
\end{equation}                                                                                            
with  notation $\| \mathbf{u} \|_{\eA} = \sqrt{\mathbf{u}^\top \eA \mathbf{u}}$ for $\mathbf{u} \in \eR^N$. 
Then, the VMFB method for solving Problem \eqref{eq:pbminimzation2} reads, for every $k \in \eN$,
\begin{equation}
\ex_{k+1 } = \prox_{\gamma_k^{-1} \eA_{k}, \Phi}\left(\ex_k - \gamma_k (\eA_{k})^{-1} \nabla \Psi(\ex_k) \right),
\label{eq:proxineexact}
\end{equation}
where $\ex_0 \in \eR^N$, and $(\gamma_k)_{k \in \eN}$ and $(\eA_{k})_{k \in \eN}$ are sequences of positive stepsizes and SPD metric \cred{matrix}, respectively, chosen in such a way to guarantee the convergence of VMFB iterates to a solution to Problem \eqref{eq:pbminimzation1} \cite{Chouzenoux_E_2014_j-optim-theory-appl_variable_mfbamsdfcf}. Two main challenges arise, when implementing the VMFB algorithm, namely (i) the choice for the preconditioning matrices $(\eA_{k})_{k \in \eN}$, and (ii) the evaluation of the proximity operator involved in the update \eqref{eq:proxineexact}. In \cite{Chouzenoux_E_2014_j-optim-theory-appl_variable_mfbamsdfcf}, a novel methodology was proposed based on the choice of preconditioning matrices satisfying a majorization condition for $\Psi$. This methodology provides a practically efficient algorithm. Furthermore, it allows to establish convergence in the case of a non necessarily convex function $\Psi$, as soon as it satisfies the so-called Kurdyka-\L{}ojasewiecz inequality \cite{Attouch_H_2013_j-math-programm_convergence_dmsatppafbsrgsm}. Convergence also holds when the proximity update is subject to numerical errors. These advantages are particularly beneficial in our context, as our SPOQ penalty $\Psi$ is non-convex, and the data fidelity term $\Phi$ may have a non closed form for its proximity operator (for instance, in the case of \eqref{eq:phidef}). As shown in Proposition \ref{proposPsi}, function $\Psi$ is Lipschitz differentiable and thus a constant metric could be used in our implementation of VMFB, then reduced to the standard forward-backward (FB) scheme. However, FB algorithm \cred{sometimes exhibit poor convergence speed performance}. In particular, it is clear that $L$ --- the Lipschitz constant defined in \eqref{eq:LipSPOQ}, albeit an upper bound --- can become very high for small parameters $(\alpha,\beta,\eta)$, which is actually the case of interest as they only act as smoothing constants for ensuring differentiability\ldd{ of the function}. As shown in Proposition \ref{proposPsi}(ii), it is possible to build a more accurate quadratic majorizing approximation on $\Psi$, whose curvature depends on the point it is calculated. However, the majorization in that case holds only on a subset of $\eR^N$. \cred{We extend  \cite{Chouzenoux_E_2014_j-optim-theory-appl_variable_mfbamsdfcf} with a trust-region scheme in order to use local majorizing metrics.} 
Without deteriorating the convergence guarantees of the original method, this gives rise to a novel preconditioned proximal gradient scheme adapted at each iteration.  



\subsection{Proposed algorithm}
\cred{In this section, we present our Algorithm \ref{alg:TRVMFB} called trust-region (TR) VMFB, for solving Problem \eqref{eq:pbminimzation2}}. At each iteration $k \in \eN$, we will make $B \geq 1$ trials of values $(\rho_{k,i})_{k \in \eN, 1 \leq i \leq B}$ for the trust-region radius. For each tested radius $\rho_{k,i} \geq 0$, a VMFB update $\ez_{k,i}$ is computed within the majorizing metric \cred{matrix} $\eA_{q, \rho_{k,i}}$ at $\ex_k$, defined in Proposition \ref{proposPsi}(ii). Then, a test is performed for checking whether the update does belong to the region $\overline{\BB}_{q,\rho_{k,i}}$. If not, the region size is reduced with a factor $\theta \in ]0,1[$, and a new VMFB step is performed. The trust-region loop stops as soon as $\ez_{k,i} \in \overline{\BB}_{q,\rho_{k,i}}$. Note that, for the last trial, i.e. $i = B$, a radius equal to $0$ is tested, which allows us to guarantee \cred{that our method is well-defined}. More precisely, this leads to the following sequence, for the radius values: 
\begin{equation}                      
\label{eq:rhoTR}
\rho_{k,i} =
\begin{cases}
 \sum_{n=1}^N |x_{n,k}|^q & \text{if} \quad i = 1\\
\theta \rho_{k,i-1} & \text{if} \quad 2 \leq i \leq B-1 \\
0 & \text{if} \quad i = B.
\end{cases}
\end{equation}
Let us remark that $\ex_k \in \overline{\BB}_{q,\rho_{k,1}}$ and the following inclusion holds by construction:
\begin{equation}
\label{eq:BBinc}
 \overline{\BB}_{q,\rho_{k,1}} \subset \overline{\BB}_{q,\rho_{k,2}} \cdots \subset \overline{\BB}_{q,\rho_{k,B}} = \eR^N.
\end{equation}
\cred{Thus, for every $i \in \{1,\ldots,B\}$, $\ex_k \in  \overline{\BB}_{q,\rho_{k,i}}$ and the Proposition \ref{proposPsi}(ii), ensuring the validity of the local majorizing quadratic, can be applied.}

\begin{algorithm}
\caption{TR-VMFB algorithm}
\label{alg:TRVMFB}
$$
\begin{array}{l}
\text{Initialize:} \quad \ex_0 \in \text{dom} \Phi, \; B \in \eN^*, \; \theta \in ]0,1[, \; (\gamma_k)_{k \in \eN} \in ]0,+\infty[\\
\text{For}\;\;k = 0,1,\ldots:\\
\left\lfloor
\begin{array}{l}
\text{For}\;\;i = 1,\ldots,B:\\
\left\lfloor
\begin{array}{l}
\text{Set trust-region radius} \quad \rho_{k,i} \quad \text{using \eqref{eq:rhoTR}}\\
\text{Construct} \quad \eA_{k,i} = \eA_{q,\rho_{k,i} }(\ex_k) \quad \text{using \eqref{eq:metriqueA}}\\
\ez_{k,i} = \prox_{\gamma_k^{-1} \eA_{k,i}, \Phi}\left(\ex_k - \gamma_k (\eA_{k,i})^{-1} \nabla \Psi(\ex_k) \right)\\
\text{If}\;\; \ez_{k,i} \in \overline{\BB}_{q,\rho_{k,i}}: \text{Stop loop}\\
\end{array} 
\right. \\ 
\ex_{k+1} = \ez_{k,i}\\
\end{array}
\right.
\end{array} 
$$
\end{algorithm}

As already mentioned, the computation of the proximity operator of $\Phi$ within a general SPD metric \cred{matrix} cannot usually be performed in a closed form, and an inner solver is required. In order to encompass this situation, we propose in Algorithm~\ref{alg:TRVMFBi} an inexact form of our TR-VMFB method. The precision for the computation of the proximity update is measured by means of two inequalities, Alg.~\ref{alg:TRVMFBi}(a) and Alg.~\ref{alg:TRVMFBi}(b). 
\cred{In practice, an inner solver such as \cite{Pesquet_J_2012_j-pacific-j-opt_parallel_ipom,Abboud_F_2017_j-math-imaging-vis_dual_bcfbaaddvs} can be employed to compute this update. Moreover, the outer loop of TR-VMFB would be ran until a given stopping criterion is met, as we will explain in Section \ref{sec:appli_B}.}

\begin{algorithm}
\caption{TR-VMFB algorithm --- Inexact form}
\label{alg:TRVMFBi}
$$
\begin{array}{l}
\text{Initialize:} \quad \ex_0 \in \text{dom} \Phi, \; B \in \eN^*, \; \theta \in ]0,1[, \; (\gamma_k)_{k \in \eN} \in ]0,+\infty[\\
\text{For}\;\;k = 0,1,\ldots:\\
\left\lfloor
\begin{array}{l}
\text{For}\;\;i = 1,\ldots,B:\\
\left\lfloor
\begin{array}{l}
\text{Set trust-region radius} \quad \rho_{k,i} \quad \text{using \eqref{eq:rhoTR}}\\
\text{Construct} \quad \eA_{k,i} = \eA_{q,\rho_{k,i} }(\ex_k) \quad \text{using \eqref{eq:metriqueA}}\\
\text{Find } \ez_{k,i} \in \eR^N  \text{ such that}\\
\text{(a)}\; \Phi(\ez_{k,i}) + (\ez_{k,i}-\ex_k)^\top \nabla \Psi(\ex_k) \\ 
\quad \quad \quad +  \gamma_k^{-1} \|\ez_{k,i} - \ex_k \|_{\eA_{k,i}}^{2} \leq \Phi(\ex_k)\\
\text{(b)}\; \| \nabla \Psi(\ex_k) + \er_{k,i} \| \leq \kappa \| \ez_{k,i} - \ex_k \|_{\eA_{k,i}} \\ 
\qquad \text{with} \; \er_{k,i} \in \partial \Phi(\ez_{k,i}) \;\text{and}\; \kappa>0 \\
\text{If}\;\; \ez_{k,i} \in \overline{\BB}_{q,\rho_{k,i}}: \text{Stop loop}\\
\end{array} 
\right. \\ 
\ex_{k+1} = \ez_{k,i}\\
\end{array}
\right.
\end{array} 
$$
\end{algorithm}

\subsection{Convergence analysis}
In this section, we show that Algorithm \ref{alg:TRVMFB} can be viewed as a special instance of Algorithm \ref{alg:TRVMFBi} provided that $\kappa$ is chosen large enough. Moreover, we establish a descent lemma for Algorithm \ref{alg:TRVMFBi}, that allows us to deduce its convergence to a solution to Problem \eqref{eq:pbminimzation2}. 
We start with the following assumptions on the sequences $(\gamma_k)_{k \in \eN}$ and $(\eA_{k,i})_{k \in \eN,1\leq i \leq B}$, that are necessary for our convergence analysis:

\begin{assumption} ~ \\
(i) There exist\lda{s} $(\underline{\gamma},\overline{\gamma}) \in ]0, +\infty[^2$ such that for every $k \in \eN$, \\
$\underline{\gamma} \leq \gamma_k \leq 2 - \overline{\gamma}$.\\
(ii) There exists $(\underline{\nu}, \overline{\nu}) \in ]0, +\infty[^2$, such that, for every $k \in \eN$ and for every $i \in \{1,\ldots,B\}$, $
\underline{\nu} \eI_N \cred{\preceq} \eA_{k,i} \cred{\preceq} \overline{\nu}\eI_N$. 
\label{ass:Hyp}
\end{assumption}
\begin{remark}
By construction,  iterates $(\ex_k)_{k \in \eN}$ produced by Algorithms \ref{alg:TRVMFB} and \ref{alg:TRVMFBi}, belong to the domain of $\Phi$ \cred{and therefore} to the set $\eC$. This implies that sequence $(\ex_k)_{k \in \eN}$ is bounded, so that there exists $\rho_{\max} \geq 0$ such that, for every $k \in \eN$ and  $i \in \{1,\ldots,B\}$, we have $\rho_{k,i} \leq \rho_{\max}$. Assumption~\ref{ass:Hyp}(ii) thus holds as a consequence of \eqref{eq:maj_bnd}, by setting $\underline{\nu} = \chi_{q,\rho_{\max}}$ and $\overline{\nu} =  \chi_{q,0} + \beta^{-p} \alpha^{p-2}$. 
\end{remark}

The following lemma establishes the link between Algorithm \ref{alg:TRVMFB} and its inexact form, Algorithm \ref{alg:TRVMFBi}.
\begin{lemma}~ \\
Under Assumption \ref{ass:Hyp}, for every $i \in \{1,\ldots,B \}$, there exist $\er_{k,i} \in \partial \Psi(\ez_{k,i})$
such that conditions Alg.~\ref{alg:TRVMFBi}(a) and Alg.~\ref{alg:TRVMFBi}(b) are fulfilled, with 
\begin{equation}
\label{eq:exactprox}
\ez_{k,i} = \prox_{\gamma_k^{-1} \eA_{k,i}, \Phi}\left(\ex_k - \gamma_k (\eA_{k,i})^{-1} \nabla \Psi(\ex_k) \right)
\end{equation}
and $\kappa \geq \underline{\gamma}^{-1} \sqrt{\overline{\nu}}$.
\label{prop:2}
\end{lemma}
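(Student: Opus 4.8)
The plan is to read off both inexactness conditions directly from the \emph{exact} variable-metric forward-backward update \eqref{eq:exactprox}, by using its first-order optimality characterization to single out one canonical subgradient $\er_{k,i}\in\partial\Phi(\ez_{k,i})$ and then showing that Alg.~\ref{alg:TRVMFBi}(a) and Alg.~\ref{alg:TRVMFBi}(b) both hold with \emph{that same} $\er_{k,i}$. Throughout I fix $k\in\eN$ and $i\in\{1,\ldots,B\}$ and abbreviate $\ez=\ez_{k,i}$, $\ex=\ex_k$, $\gamma=\gamma_k$, $\eA=\eA_{k,i}$, $\boldsymbol{g}=\nabla\Psi(\ex_k)$, and $\eu=\ex-\gamma\eA^{-1}\boldsymbol{g}$. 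By \eqref{eq:proxmetrique}, $\ez$ is the unique minimizer of the strongly convex map $\ez'\mapsto \tfrac12\|\ez'-\eu\|_{\gamma^{-1}\eA}^2+\Phi(\ez')$.

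Since $\Phi\in\Gamma_0(\eR^N)$ and the quadratic part is differentiable, Fermat's rule yields $\zerob_N\in\gamma^{-1}\eA(\ez-\eu)+\partial\Phi(\ez)$, so there exists $\er\in\partial\Phi(\ez)$ (this is the retained $\er_{k,i}$) with $\er=-\gamma^{-1}\eA(\ez-\eu)$. Substituting $\eu=\ex-\gamma\eA^{-1}\boldsymbol{g}$ gives the single identity that drives the whole argument,
\begin{equation}
\boldsymbol{g}+\er=-\gamma^{-1}\eA(\ez-\ex).
\label{eq:keyid}
\end{equation}

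For Alg.~\ref{alg:TRVMFBi}(b) I would take the Euclidean norm of \eqref{eq:keyid}, giving $\|\boldsymbol{g}+\er\|=\gamma^{-1}\|\eA(\ez-\ex)\|$. The one technical ingredient is the matrix inequality $\eA^2\preceq\overline{\nu}\,\eA$, which follows from Assumption~\ref{ass:Hyp}(ii) ($\eA\preceq\overline{\nu}\eI_N$): for SPD $\eA$ one has $v^\top\eA^2 v=(\eA^{1/2}v)^\top\eA(\eA^{1/2}v)\le\overline{\nu}\,v^\top\eA v$ for all $v\in\eR^N$. Hence $\|\eA(\ez-\ex)\|^2=(\ez-\ex)^\top\eA^2(\ez-\ex)\le\overline{\nu}\,\|\ez-\ex\|_{\eA}^2$, and combining with $\gamma^{-1}\le\underline{\gamma}^{-1}$ from Assumption~\ref{ass:Hyp}(i) gives $\|\boldsymbol{g}+\er\|\le\underline{\gamma}^{-1}\sqrt{\overline{\nu}}\,\|\ez-\ex\|_{\eA}$, which is exactly Alg.~\ref{alg:TRVMFBi}(b) for any $\kappa\ge\underline{\gamma}^{-1}\sqrt{\overline{\nu}}$.

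For Alg.~\ref{alg:TRVMFBi}(a) the natural move is the subgradient inequality of the convex $\Phi$ at $\ez$, namely $\Phi(\ex)\ge\Phi(\ez)+\er^\top(\ex-\ez)$; substituting $\er$ from \eqref{eq:keyid} yields $\Phi(\ez)-\Phi(\ex)\le\er^\top(\ez-\ex)=-(\ez-\ex)^\top\boldsymbol{g}-\gamma^{-1}\|\ez-\ex\|_{\eA}^2$, which rearranges precisely into Alg.~\ref{alg:TRVMFBi}(a). I expect this last step to be the main subtlety: a cruder argument that merely compares the proximity objective values at $\ez$ and at $\ex$ delivers only the weaker constant $\tfrac12\gamma^{-1}$ in front of $\|\ez-\ex\|_{\eA}^2$, whereas condition (a) demands the full $\gamma^{-1}$; it is the use of the subgradient inequality with the \emph{specific} $\er$ furnished by optimality that supplies the missing factor, which is why the same $\er$ must serve in both (a) and (b). (I note in passing that the subgradient in the statement should be taken in $\partial\Phi(\ez_{k,i})$, consistent with Alg.~\ref{alg:TRVMFBi}(b).)
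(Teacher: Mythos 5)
Your proof is correct and follows essentially the same route as the paper's: both extract the specific subgradient $\er_{k,i} = -\nabla\Psi(\ex_k) + \gamma_k^{-1}\eA_{k,i}(\ex_k - \ez_{k,i})$ from the optimality condition of the proximity operator, obtain condition (a) by combining this identity with the convexity (subgradient) inequality for $\Phi$ at $\ez_{k,i}$, and obtain condition (b) by taking norms and bounding $\|\eA_{k,i}(\ex_k - \ez_{k,i})\| \leq \sqrt{\overline{\nu}}\,\|\ex_k - \ez_{k,i}\|_{\eA_{k,i}}$ via Assumption~\ref{ass:Hyp} (a step the paper leaves implicit and you spell out). Your parenthetical remark is also right: the lemma's statement $\er_{k,i} \in \partial\Psi(\ez_{k,i})$ is a typo for $\er_{k,i} \in \partial\Phi(\ez_{k,i})$, which is what the paper's own proof and Alg.~\ref{alg:TRVMFBi}(b) use.
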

\begin{proof} Let $k \in \eN$ and $i \in \left\{1,\ldots,B\right\}$, and set $\ez_{k,i}$ as in \eqref{eq:exactprox}. 
Due to the variational definition of the proximity operator, and the convexity of $\Phi$,  there exist\afr{s}{\ldd{s}} $\er_{k,i} \in \partial \Phi(\ez_{k,i})$ such that
\begin{equation}
\label{eq:proxopt}
\begin{cases}
\er_{k,i} = - \nabla \Psi(\ex_k) + \gamma_k^{-1} \eA_{k,i} (\ex_k - \ez_{k,i})\\
(\ez_{k,i} - \ex_k)^\top \er_{k,i} \geq \Phi(\ez_{k,i}) - \Phi(\ex_k).
\end{cases}
\end{equation}
Thus, $\ez_{k,i}$ satisfies:
\begin{equation}
\Phi(\ez_{k,i}) + (\ez_{k,i} - \ex_k)^\top \nabla \Psi(\ex_k) + \gamma_k^{-1} \| \ez_{k,i} - \ex_k \|^2_{\eA_{k,i}} \leq \Phi(\ex_k)
\end{equation}
Therefore, condition Alg.~\ref{alg:TRVMFBi}(a) holds. Moreover, using \eqref{eq:proxopt} and Assumption \ref{ass:Hyp}, 
\begin{align}
\| \er_{k,i} + \nabla \Psi(\ex_k) \| & = \gamma_k^{-1} \| \eA_{k,i} (\ex_k - \ez_{k,i}) \| \nonumber\\
& \leq \underline{\gamma}^{-1} \sqrt{\overline{\nu}} \| \ex_k - \ez_{k,i} \|_{\eA_{k,i}}.
\end{align}
Hence the condition Alg.~\ref{alg:TRVMFBi}(b) holds for $\kappa \geq \underline{\gamma}^{-1} \sqrt{\overline{\nu}}$.
\end{proof}
We now establish a descent property on the sequence generated by our method. 


\begin{lemma} ~ \\
\label{lem:desc}
Under Assumption \ref{ass:Hyp}, there exists $\mu \in ]0,+\infty[$ such that, for every $k \in \eN$,
\begin{equation}
\label{eq:des1}
\Omega(\ex_{k+1}) \leq \Omega(\ex_k) - \frac{\mu}{2}\| \ex_{k+1} - \ex_k \|^2 
\end{equation}
with $(\ex_k)_{k \in \eN}$ defined in Algorithm \ref{alg:TRVMFBi}. 
\end{lemma}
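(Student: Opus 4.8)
The plan is to combine the local quadratic majorant of $\Psi$ from Proposition~\ref{proposPsi}(ii) with the inexact descent inequality Alg.~\ref{alg:TRVMFBi}(a) satisfied by the accepted inner iterate, and then to exploit Assumption~\ref{ass:Hyp} to turn the resulting quadratic remainder into a strictly negative term, uniformly in $k$.

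First I would fix $k \in \eN$ and let $i \in \{1,\ldots,B\}$ be the index at which the trust-region loop stops, so that $\ex_{k+1} = \ez_{k,i}$. Because the loop terminates only when $\ez_{k,i} \in \overline{\BB}_{q,\rho_{k,i}}$ (and this is guaranteed by $i=B$, where $\rho_{k,B}=0$ and $\overline{\BB}_{q,0}=\eR^N$), the accepted point satisfies $\ex_{k+1}\in \overline{\BB}_{q,\rho_{k,i}}$. Moreover, since $\ex_k\in \overline{\BB}_{q,\rho_{k,1}}$ and the inclusions \eqref{eq:BBinc} give $\overline{\BB}_{q,\rho_{k,1}}\subset \overline{\BB}_{q,\rho_{k,i}}$, the base point $\ex_k$ also lies in $\overline{\BB}_{q,\rho_{k,i}}$. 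Hence both $\ex_k$ and $\ex_{k+1}$ belong to the set on which the majorant \eqref{e:majloc}, built at $\ex_k$ with the metric $\eA_{k,i}=\eA_{q,\rho_{k,i}}(\ex_k)$ of \eqref{eq:metriqueA}, is valid, and I may evaluate \eqref{e:majloc} at $\ex'=\ex_{k+1}$.

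Next I would add this majorization inequality to condition Alg.~\ref{alg:TRVMFBi}(a) written for $\ez_{k,i}=\ex_{k+1}$. The two occurrences of the linear term $(\ex_{k+1}-\ex_k)^\top\nabla\Psi(\ex_k)$ cancel, and the values $\Psi(\ex_{k+1})$ and $\Phi(\ex_{k+1})$ recombine into $\Omega(\ex_{k+1})$, leaving
\begin{equation}
\Omega(\ex_{k+1}) \leq \Omega(\ex_k) + \Big(\tfrac12 - \gamma_k^{-1}\Big)\,\|\ex_{k+1}-\ex_k\|_{\eA_{k,i}}^2 .
\end{equation}
Assumption~\ref{ass:Hyp}(i) gives $\gamma_k \leq 2-\overline{\gamma}$, whence $\tfrac12-\gamma_k^{-1} \leq -\overline{\gamma}/\big(2(2-\overline{\gamma})\big) < 0$; since this coefficient is negative, I then lower-bound the metric norm by $\|\ex_{k+1}-\ex_k\|_{\eA_{k,i}}^2 \geq \underline{\nu}\,\|\ex_{k+1}-\ex_k\|^2$ using Assumption~\ref{ass:Hyp}(ii). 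This produces \eqref{eq:des1} with the explicit constant $\mu = \overline{\gamma}\,\underline{\nu}/(2-\overline{\gamma}) \in\,]0,+\infty[$, which is independent of $k$.

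I expect no serious analytical obstacle here, as the computation is the classical majorize--minimize forward--backward descent estimate. The only delicate step, and the one that genuinely relies on the trust-region mechanism, is the very first one: justifying that the local majorant \eqref{e:majloc} may be applied to the accepted pair $(\ex_k,\ex_{k+1})$. This requires simultaneously that the base point and the updated point lie in the \emph{same} non-convex ball complement $\overline{\BB}_{q,\rho_{k,i}}$, which is exactly what the stopping test and the nesting \eqref{eq:BBinc} guarantee. I note that condition Alg.~\ref{alg:TRVMFBi}(b) plays no role in this descent estimate and is reserved for the subsequent Kurdyka--{\L}ojasiewicz convergence analysis.
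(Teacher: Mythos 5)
Your proposal is correct and follows essentially the same route as the paper: validity of the local majorant \eqref{e:majloc} at the accepted pair $(\ex_k,\ex_{k+1})$ via the stopping test and the nesting \eqref{eq:BBinc}, summation with condition Alg.~\ref{alg:TRVMFBi}(a), and then Assumption~\ref{ass:Hyp} to obtain a uniform constant. The only (immaterial) difference is the constant: your $\mu = \overline{\gamma}\,\underline{\nu}/(2-\overline{\gamma})$ is the sharp value from the computation, while the paper conservatively takes $\mu = \underline{\nu}\,\overline{\gamma}/\bigl(2(2-\overline{\gamma})\bigr)$, which the same inequality implies a fortiori.
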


\begin{proof}
We have
\begin{equation}
(\forall k \in \eN) \quad \Omega (\ex_{k+1}) = \Psi(\ex_{k+1}) + \Phi(\ex_{k+1})
\end{equation}
Under condition Alg.~\ref{alg:TRVMFBi}(a),
\begin{equation}
\Phi(\ex_{k+1}) + (\ex_{k+1}-\ex_k)^\top \nabla \Psi(\ex_k) + \gamma_k^{-1} \|\ex_{k+1} - \ex_k \|_{\eA_{k,i}}^{2} \leq \Phi(\ex_k).
\end{equation}
By construction, $\ex_{k+1} \in \overline{\BB}_{q,\rho_{k,i}}$ for some $i\in \{1,\ldots,B\}$. Moreover, $\ex_k \in \overline{\BB}_{q,\rho_{k,1}} \subset \overline{\BB}_{q,\rho_{k,i}}$. Therefore, by Proposition~\ref{proposPsi},
\begin{equation}
\label{eq:Fxplus1}
\Psi(\ex_{k+1}) \leq \Psi(\ex_k)+(\ex_{k+1}-\ex_k)^\top \nabla \Psi(\ex_k) + \frac{1}{2}\|\ex_{k+1}-\ex_k\|_{\eA_{k,i}}^{2}.
\end{equation}
Thus,
\begin{align}
\Omega(\ex_{k+1}) 
& \leq \Psi(\ex_k) + \Phi(\ex_k) + \frac{1}{2}\|\ex_{k+1} -\ex_k \|_{\eA_{k,i}}^2  \nonumber \\
& \qquad  - \gamma_k^{-1} \|\ex_{k+1}- \ex_k\|_{\eA_{k,i}}^2,   \nonumber\\
& \leq \Omega(\ex_k) - (\gamma_k^{-1} - \frac{1}{2}) \|\ex_{k+1} -\ex_k \|_{\eA_{k,i}}^2 .
\end{align}
Consequently, using Assumption \ref{ass:Hyp}, we deduce \eqref{eq:des1} by taking  $\mu = \frac{\underline{\nu} \overline{\gamma}}{2(2-\overline{\gamma})}$.
\end{proof}


\begin{theorem}~ \\
If $\Phi$ is a semi-algebraic function on $\eR^N$ and Assumption \ref{ass:Hyp} holds, then the sequence $(\ex_k)_{k \in \eN}$ generated by Algorithm \ref{alg:TRVMFB} converges to a critical point $\hat{\ex}$ of $\Omega$.
\end{theorem}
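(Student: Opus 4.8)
The plan is to cast this as an instance of the abstract Kurdyka-\L{}ojasiewicz (KL) convergence machinery of \cite{Attouch_H_2013_j-math-programm_convergence_dmsatppafbsrgsm,Chouzenoux_E_2014_j-optim-theory-appl_variable_mfbamsdfcf}, which guarantees that a bounded sequence converges to a single critical point as soon as three ingredients hold: a sufficient-decrease inequality, a relative-error subgradient bound, and the KL property of the objective at its cluster points. First I would use Lemma~\ref{prop:2} to regard the iterates of Algorithm~\ref{alg:TRVMFB} as a particular instance of the inexact Algorithm~\ref{alg:TRVMFBi} (taking $\kappa \ge \underline{\gamma}^{-1}\sqrt{\overline{\nu}}$), so that the descent Lemma~\ref{lem:desc} and conditions Alg.~\ref{alg:TRVMFBi}(a)--(b) apply verbatim to these iterates.

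The sufficient-decrease condition is exactly Lemma~\ref{lem:desc}, which gives $\Omega(\ex_{k+1}) \le \Omega(\ex_k) - \tfrac{\mu}{2}\|\ex_{k+1}-\ex_k\|^2$ with $\mu>0$. Since the iterates lie in the compact set $\eC$ and $\Omega$ is lower bounded there, summing this inequality yields $\sum_k \|\ex_{k+1}-\ex_k\|^2 < +\infty$ and the convergence of the nonincreasing sequence $(\Omega(\ex_k))_{k\in\eN}$ to some $\Omega^\star$. For the relative-error bound I would combine condition Alg.~\ref{alg:TRVMFBi}(b) with the Lipschitz differentiability of $\Psi$ from Proposition~\ref{proposPsi}(i). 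Writing $\ex_{k+1}=\ez_{k,i}$ for the accepted index $i$, the associated $\er_{k,i}\in \partial\Phi(\ex_{k+1})$ defines $w_{k+1} = \nabla\Psi(\ex_{k+1}) + \er_{k,i} \in \partial\Omega(\ex_{k+1})$ (using $\partial\Omega = \nabla\Psi + \partial\Phi$, as $\Psi$ is smooth), and the triangle inequality gives
\begin{multline}
\|w_{k+1}\| \le \|\nabla\Psi(\ex_{k+1}) - \nabla\Psi(\ex_k)\| + \|\nabla\Psi(\ex_k) + \er_{k,i}\| \\ \le (L + \kappa\sqrt{\overline{\nu}})\,\|\ex_{k+1}-\ex_k\|,
\end{multline}
where the first term is controlled by Proposition~\ref{proposPsi}(i) and the second by Alg.~\ref{alg:TRVMFBi}(b) together with Assumption~\ref{ass:Hyp}(ii). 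This is the required bound $\|w_{k+1}\| \le b\,\|\ex_{k+1}-\ex_k\|$.

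It remains to handle the continuity ingredient and the cluster points. Since the sequence lives in the compact $\eC$, it admits cluster points; along any subsequence $\ex_{k_j}\to\tilde\ex$, the lower semicontinuity of $\Phi$, the continuity of $\Psi$, and condition Alg.~\ref{alg:TRVMFBi}(a) (which forces $\limsup_j \Phi(\ex_{k_j}) \le \Phi(\tilde\ex)$) give $\Omega(\ex_{k_j})\to\Omega(\tilde\ex)=\Omega^\star$. Passing to the limit in the subgradient bound, using $\|\ex_{k+1}-\ex_k\|\to 0$ and the closedness of the subdifferential graph, shows that every cluster point is a critical point of $\Omega$, with $\Omega$ constant on the cluster set.

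The hard part will be establishing the KL property of $\Omega$, since the hypothesis only supplies that $\Phi$ is semi-algebraic. I would complement this by observing that $\Psi$, being built in \eqref{eq:lpsmooth}--\eqref{eq:reg_sparse} from the logarithm and from the real-exponent power maps $t\mapsto (t^2+\alpha^2)^{p/2}$ and $t\mapsto|t|^q$, is definable in the o-minimal structure $\mathbb{R}_{\mathrm{an},\exp}$. As $\Phi$, being semi-algebraic, is definable in the same structure, the sum $\Omega=\Psi+\Phi$ is definable and hence satisfies the KL inequality at every point of its domain. With the three ingredients and the KL property at hand, the abstract theorem yields the finite-length property $\sum_k\|\ex_{k+1}-\ex_k\| < +\infty$; the sequence $(\ex_k)_{k\in\eN}$ is therefore Cauchy and converges to a single limit $\hat\ex$, which by the above is a critical point of $\Omega$.
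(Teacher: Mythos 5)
Your proposal is correct and follows essentially the same route as the paper: the paper's proof is a one-liner that notes coercivity of $\Omega$ (from compactness of $\eC$), establishes the Kurdyka--\L{}ojasiewicz property via definability of $\Omega$ in an o-minimal structure containing semi-algebraic functions and the logarithm, and then invokes the descent Lemma together with \cite[Theorem 4.1]{Chouzenoux_E_2014_j-optim-theory-appl_variable_mfbamsdfcf}, whose hypotheses (sufficient decrease, relative-error subgradient bound via Lemma~\ref{prop:2} and Proposition~\ref{proposPsi}(i), KL property) are exactly the three ingredients you verify by hand. The only small imprecision is attributing the continuity ingredient $\limsup_j \Phi(\ex_{k_j}) \le \Phi(\tilde\ex)$ to condition Alg.~\ref{alg:TRVMFBi}(a), which only relates consecutive iterates; it instead follows from the convexity of $\Phi$ and the boundedness of the subgradients $\er_{k,i}$ supplied by condition (b) --- a detail already absorbed in the cited theorem.
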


\begin{proof}
As already stated, the compactness assumption made on $\eC$ implies that $\Omega$ is coercive. Moreover, it belongs to an o-minimal structure including semi-algebraic functions and logarithmic function, so that it satisfies Kurdyka-\L{}ojasiewicz inequality \cite{Attouch_H_2013_j-math-programm_convergence_dmsatppafbsrgsm,VanDenDries_L_1998_book_tame_toms}. Therefore, by using Lemma \ref{lem:desc}, and \cite[Theorem 4.1]{Chouzenoux_E_2014_j-optim-theory-appl_variable_mfbamsdfcf}, we deduce that $(\ex_k)_{k \in \eN}$ converges to a critical point of $\Omega$.
\end{proof}

\section{Application to mass spectrometry processing}
\label{Application}

\subsection{Problem statement}
In this section, we illustrate the usefulness of the proposed SPOQ regularizer in the context of mass spectrometry (MS) data processing.
MS is a fundamental technology of analytical chemistry to identify, quantify, and extract important information on molecules \cred{from pure samples and  complex chemical mixtures}.
Thanks to its high performance and capabilities, MS is applied as a routine experimental procedure in several fields, including clinical research 
\cite{Tan_K_2012_j-clin-microbiol_prospective_emalditfmsshcmlibybbbsaitice}, anti-doping and proteomics \cite{Aebersold_R_2003_j-nature_mass_sbp}, metabolomics 
\cite{Scalbert_A_2009_j-metabolomics_mass-spectrometry-based_mlrf}, biomedical and biological analyses \cite{Mano_N_2003_j-anal-sci_biomedical_bms,SchmittKopplin_P_2003_j-electrophoresis_capillary_ems15yda},
diagnosis process, cancer and tumors profiling \cite{Schwartz_S_2004_j-clin-cancer-res_protein_pbtums}, food contamination detection 
\cite{Panchaud_A_2012_j-proteomics_mass_snphafbhep}.

In an MS experiment, the raw signal arising from the molecule ionization in an ion beam is measured as a function of time via Fourier Transform. A spectral analysis step is then performed leading to the so-called MS spectrum signal. It presents a set of positive-valued peaks distributed according to the charge state and the isotopic distribution of the studied molecule, generating typical patterns. \cred{The observed signal entails the determination of the  most probable sample chemical composition}, through the determination of the monoisotopic mass, charge state, and abundance of each present isotope. 

In the particular context of proteomic analysis, the studied chemical compound contains only molecules involving carbon, hydrogen, oxygen, nitrogen, and sulfur. Thus, its isotopic pattern at a given mass and charge state can be easily synthesized, by making use of the so-called "averagine"\footnote{Determining an \emph{average amino acid} from a statistical distribution.} model \cite{Senko_M_1994_j-anal-chem_collisional_almciuftms,Senko_M_1995_j-am-soc-mass-spectrom_determination_mmiplbrid}. Assuming that the charge state is known and mono-valued (see \cite{Cherni_A_2018_p-icassp_fast_dbamsda} for the multi-charged case), we propose to express the measured MS spectrum $\ey \in \eR^M$ as the sparse combination of individual isotopic patterns, i.e.
\begin{equation}
\ey = \sum_{n=1}^N x_n \ed(m_n^{\text{iso}}, z) + \eb
\label{eq:MSpb}
\end{equation}
where $\ed(m_n^{\text{iso}}, z) \in [0,+\infty[^M$ represents the mass distribution built with the "averagine" model at isotopic mass $m_n^{\text{iso}}$ and charge $z$, discretized on a grid of size $M$, and $x_n \geq 0$ the associated weight. A non-zero value for entry $x_n$ corresponds to the presence of monoisotope with mass $m_n^{\text{iso}}$. Moreover, $\eb \in \eR^M$ models the acquisition noise and some possible errors arising from the spectral analysis step. Let us form a dictionary matrix $\eD \in \eR^{M \times N}$ whose $n$-th column reads $\ed(m_n^{\text{iso}},z)$. Then, the above observation model \eqref{eq:MSpb} reads as \eqref{eq:pbinverse}, and the problem becomes the restoration of the sparse positive-valued signal $\ex$, given $\ey$ and $\eD$. We proposed in \cite{Cherni_A_2018_p-icassp_fast_dbamsda} \cred{a restoration based on a penalized least squares problems with $\ell_1$ prior and a primal-dual splitting minimization}. In this section, we show by means of several experiments the benefits obtained by considering instead the proposed SPOQ penalty. We also perform comparisons between SPOQ and 
various other non-convex penalties.

\subsection{Simulated datasets and settings}
\label{sec:appli_B}
Two synthetic signals A and B, with size $N = 1000$, are used for the sought vector $\ex$, containing $P$ randomly selected nonzero components ($P=48$ and $P=94$, respectively). In both examples, the mass axis contains $N$ regularly spaced values between $m_{\min}=1000$ Daltons and $m_{\max}=1100$ Daltons, and we set $M=N$. This allows us to generate the associated dictionary $\eD$. The condition number of this matrix is equal to $4\times 10^{4}$. The observed vector $\ey$ is then deduced using Model \eqref{eq:pbinverse}, where the noise is assumed to be zero-mean Gaussian, i.i.d with known standard deviation $\sigma$ (chosen as a given percentage of the MS spectrum maximal amplitude).

Figure \ref{Fig:sigAandB} presents the sought isotopic distributions $\ex$ and an example of associated MS spectra, for dataset A and B.
In order to retrieve the original sparse signals, we will solve Problem \eqref{eq:pbminimzation2} using $\Theta$ defined in \eqref{eq:phidef} and $\eC = [0,x_{\max}]^N$ with $x_{\max} = 10^5$. Concerning the regularization function $\Psi$, we will make comparisons between the $\ell_1$ norm, $\ell_0$, \cred{the Smoothly Clipped Absolute Deviation (SCAD) penalty $\Psi(\ex) = \sum_{n=1}^N \psi(x_n)$ with, for every $x \in \mathbb{R}$, $\psi(x) =  \delta |x|$ if $|x| < \delta$, $(2 a \delta |x| - x^2-\delta^2)/2(a-1)$ if $\delta \leq |x| < a \delta$, and $(a+1) \delta^2 /2$ if $|x| \geq a \delta$ defined for $\delta >0$ and $a>2$ \cite{Fan_J_2001_j-asa_variable_snplop}}, the SPOQ penalty for $p \in \left\{\afa{0.05, 0.1, 0.15, 0.2}, 0.25,0.5,0.75,1,1.25,1.5\right\}$ and $q \in \left\{2,3,\ldd{4,}5,10\right\}$, the Cauchy penalty $\Psi(\ex) = \sum_{n=1}^N \log(1 + x_n^2/\delta^2)$ with $\delta>0$ \cite[p. 111--112]{Rey_W_1983_book_introduction_rqrsm}, the Welsch penalty $\Psi(\ex) = \sum_{n=1}^N (1 - \exp(-x_n^2/\delta^2))$ with $\delta>0$ \cite{Chouzenoux_E_2013_j-siam-j-imaging-sci_majorize-minimize_sal2l0ir}, and the Continuous Exact $\ell_0$ penalty (\celo) $\Psi(\ex) =$\linebreak $\sum_{n=1}^N \big(\delta - \frac{\|\ed_n\|^2}{2} \left( |x_n| - \frac{\sqrt{2 \delta}}{\|\ed_n\|} \right)^2 \mathbf1_{\{|x_n|<\frac{\sqrt{2 \delta}}{\|\ed_n\|} \} }\big)$ where $\delta >0$ and $\ed_n$ is the $n$-th column of $\eD$ \cite{Soubies_E_2015_j-siam-j-imaging-sci_continuous_el0pcel0lsrp,Soubies_E_2019_j-math-imaging-vis_insights_ocl2l0mp},\footnote{\cred{The characteristic function is} defined as $\mathbf1_{\chi} = 1$ if \ldd{condition} $\chi$ holds, $0$ otherwise.}

The resolution of \eqref{eq:pbminimzation2} is performed by using the primal-dual splitting algorithm in \cite{Chambolle_A_2011_j-math-imaging-vis_first_opdacpai,Komodakis_N_2015_j-ieee-spm_playing_dorpdaslsop} in the case of $\ell_1$ norm, $\ell_0$, \cred{SCAD} and $\celo$ penalties. For Cauchy and Welsch penalties, we use the VMFB strategy, using the majorizing metrics described in \cite{Chouzenoux_E_2014_j-optim-theory-appl_variable_mfbamsdfcf}. Finally, in the case of SPOQ, we run our trust-region VMFB method, where we set $\theta =0.5$, $B=10$, and $\gamma_k \equiv 1.9$. The proximity operator of $\Phi$ within the metric \cred{matrix} is computed by using the parallel proximal splitting algorithm from~\cite{Pesquet_J_2012_j-pacific-j-opt_parallel_ipom}, with a maximum number of $5 \cdot 10^3$ iterations. With the exception of $\ell_1$, all the tested penalization potentials are non-convex and only convergence to a local minimum can be guaranteed. In order to limit the sensitivity to spurious local minima, we initialize the optimization method using $10$ iterations of primal-dual splitting algorithm with $\ell_1$ penalty. All algorithms were run until the stopping criterion defined as $\|\ex_{k+1} - \ex_k \| \le \epsilon \| \ex_k \| $ \cred{with $\epsilon = 10^{-4}$} is satisfied, and a maximum of $10^3$ iterations. The most difficult task in this application is to estimate the support of the signal. Each of the iterative approaches presented has been evaluated with this regard. In order to avoid any bias in the estimation of the signal values, the support estimation process has been followed by a basic least squares step.

The considered non-convex regularizations depend on smoothing parameters, namely $\delta$ for Cauchy, Welsch and \celo, and $(\alpha,\beta,\eta)$ for SPOQ. When not precised, hyperparameters were optimized with grid search to maximize the signal-to-noise ratio (SNR) defined as
\begin{equation}
\text{SNR}(\ex,\hat{\ex})= 20 \log_{10} \left( \frac{\|\ex\|_2}{\|\ex - \hat{\ex}\|_2}\right)
\label{eq:SNRL2}
\end{equation}
where $\hat{\ex}$ is the estimated signal and $\ex$ the original one. Moreover, the bound $\xi$ in \eqref{eq:phidef} is set to $\sqrt{N}\sigma$. A sensitivity analysis is performed to assess the influence of these parameters on the solution quality. \cred{For quantitative comparisons}, we use the SNR defined above, the thresholded SNR metric denoted TSNR, defined as the SNR computed only on the support of the sought sparse signal, and the sparsity degree given as the number of entries of the restored signal greater (in absolute value) than a given threshold (here we take $10^{-4}$). Figure \ref{Fig:sigAandB} shows the difficulty to distinguish the monoisotopic masses $\ex$ from the MS spectrum $\ey$, especially when different isotopic peaks are present with different intensities in the same mass region.



%
\begin{figure*}[htb]
\centering
\includegraphics[width=8.5cm, height=3.5cm]{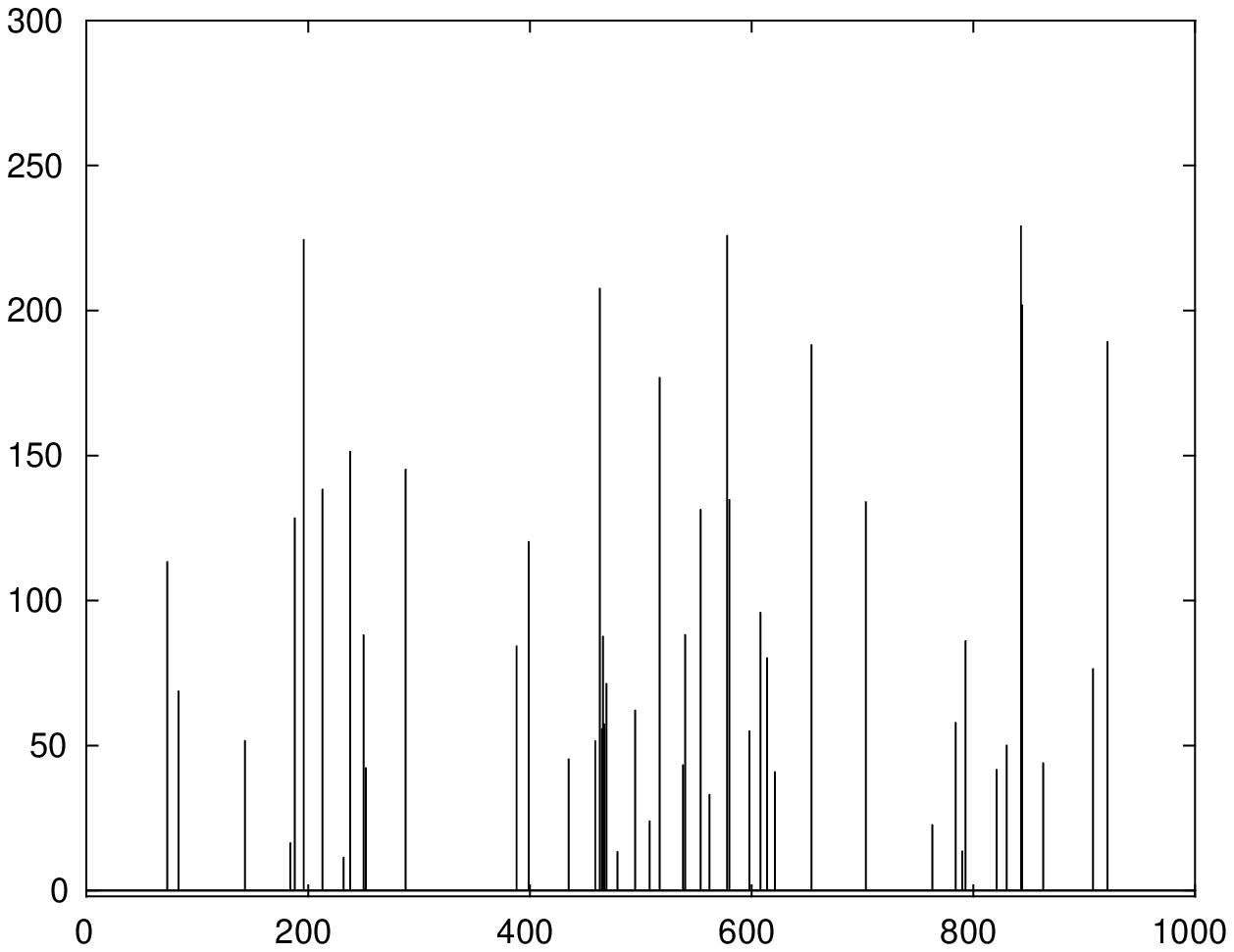}\hspace*{0.6cm}
\includegraphics[width=8.5cm, height=3.5cm]{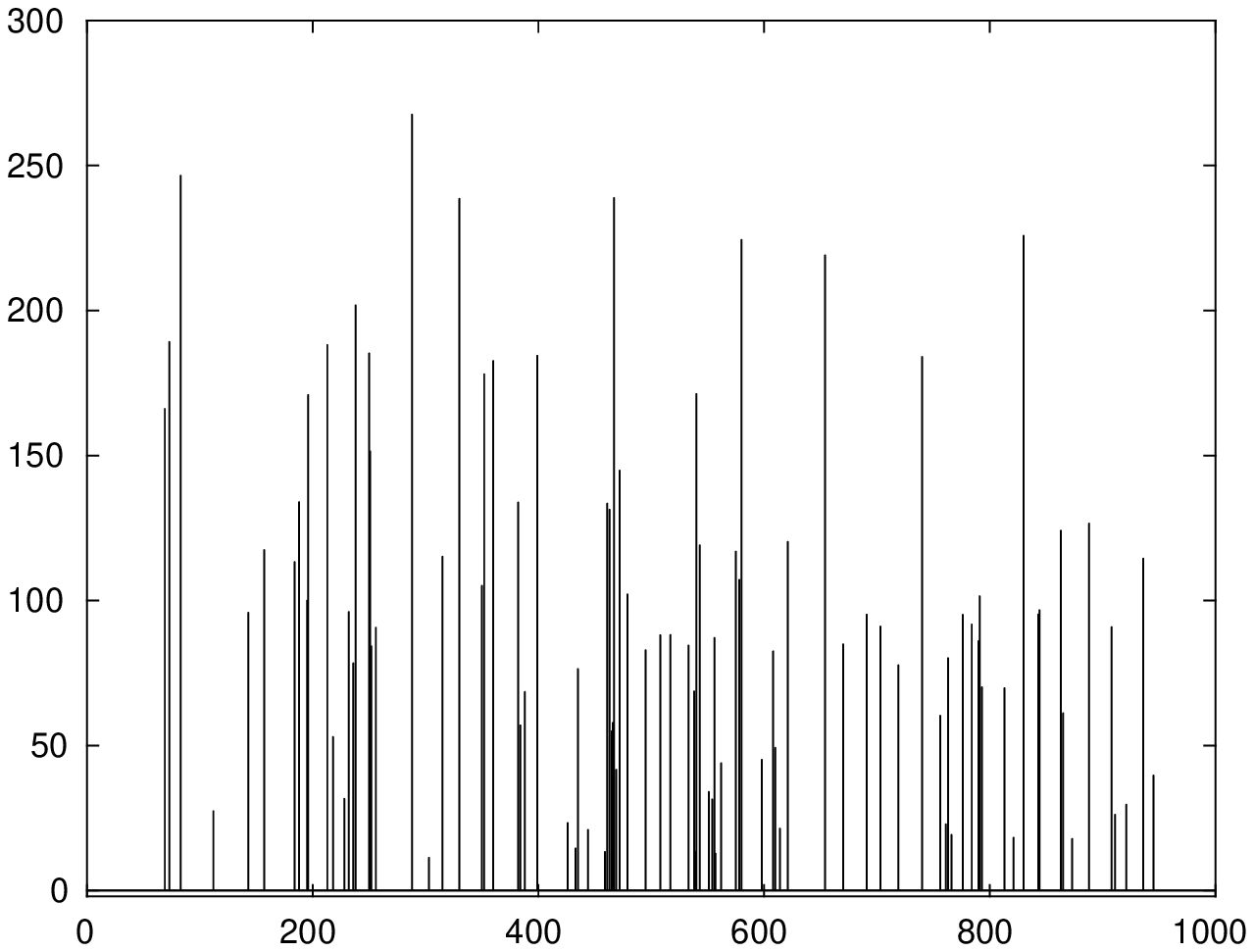}
\end{figure*}
\begin{figure*}[h!]
\centering
\hspace*{0.2cm}
\includegraphics[width=8.3cm, height=3.4cm]{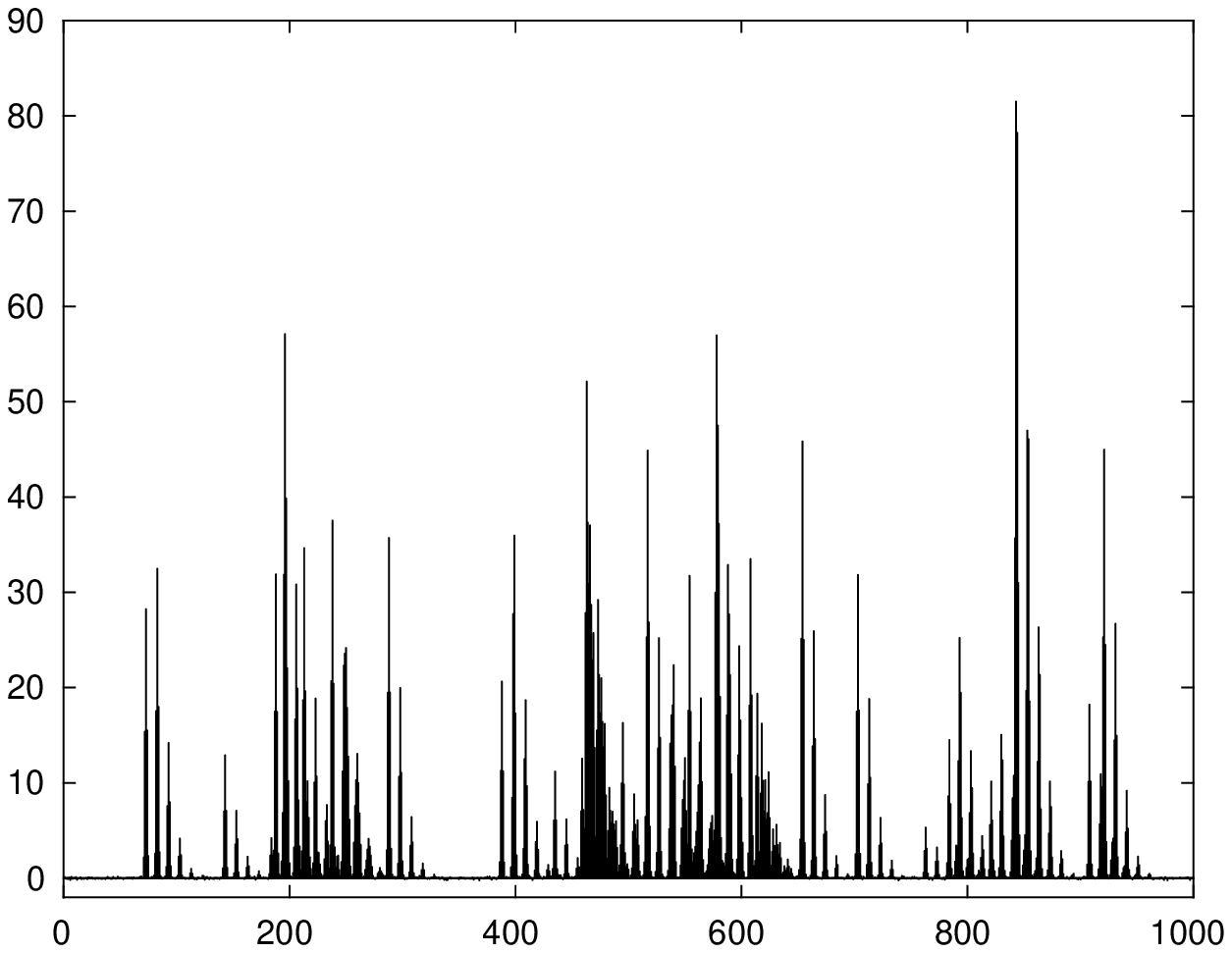}\hspace*{0.8cm}
\includegraphics[width=8.3cm, height=3.4cm]{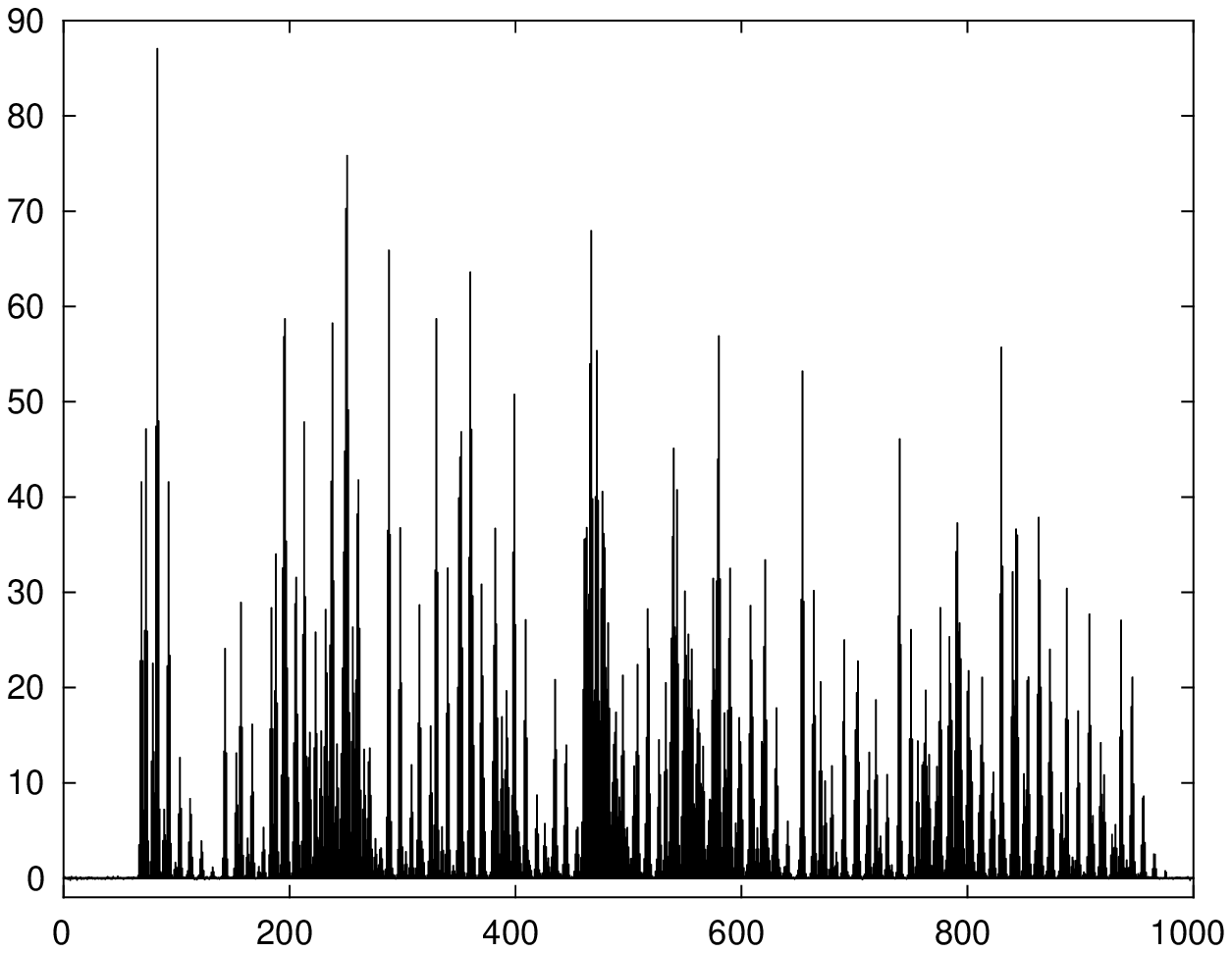}
\caption{Original sparse signals and associated MS spectra for dataset A (left, $N=1000$, $P=48$) and dataset B (right, $N=1000$, $P=94$), 
top: synthetic data, bottom: noisy MS spectra ($\sigma=0.1\%$ of the MS spectrum maximal amplitude).}
\label{Fig:sigAandB}
\end{figure*}

\subsection{Numerical results}

\subsubsection{Comparison of sparse penalties}

Tables \ref{tab:snrlplqA} and \ref{tab:snrlplqB} show the quality reconstruction of signals A and B for different regularization functions and two relative noise levels of the MS spectrum maximal amplitude (\SIlist[round-precision=1]{0.1;0.2}{\percent}), when the SNR, TSNR and sparsity degree are averaged on 10 noise realizations. \cred{It appears that the SPOQ approach yields very good performance for a suitable choice of $p$ and $q$. One can notice that the quality degrades for $p > 1$, especially for small values of $q$. A good compromise seems reached for $p \in \{0.75, 1\}$ and $q \in \{2,3\}$. \cred{Though $\ell_0$, $\ell_1$, \cred{SCAD} and \celo regularization functions ensure a good TSNR, SPOQ shows its clear superiority in all the experiments on this metric. In terms of SNR, it is again superior to all competitors, except on one configuration when SCAD is slightly better. However, in that case, SPOQ is able to provide a better TSNR, showing a better estimation of the sought support. This is confirmed by noticing that the estimated sparsity degree with SPOQ is, in all four experiments, the closest to the reference.} Finally, Cauchy and Welsch have slightly lower performance than the other regularization methods, possibly due to the smoothing induced by parameter $\delta$. These results prove that SPOQ can be the most efficient sparse penalty for an appropriate choice of $p$ and $q$. 
} 


\begin{table*}[ht!]
\tiny
\centering 
\renewcommand{\arraystretch}{1.2}
\setlength{\tabcolsep}{0.11cm}
\begin{minipage}[t]{0.49\linewidth}
\sisetup{round-precision=2}
{
\resizebox{\columnwidth}{!}{\begin{tabular}{|c|c|c|c|c|c|c|c|c|c|c|c|} \cline{3-12}
\multicolumn{2}{c|}{}  & \multicolumn{10}{c|}{$\sigma\ldr{\approx}{=}0.081 $ ($0.1 \%$ of the dataset A maximum amplitude)}  \\ \cline{3-12} 
\multicolumn{2}{c|}{}  & \multicolumn{4}{c|}{\cred{SPOQ}} &  \multirow{2}{*}{$\ell_0$} &  \multirow{2}{*}{$\ell_1$} & \cred{SCAD} & Cauchy & Welsch &  \celo \\ \cline{2-6} 
\multicolumn{1}{c|}{} & \backslashbox{$p$}{$q$} & 2 & 3 & 5 & 10 & &  & \cred{$a=2.25,\, \delta=1$} & $\delta=100$  & $\delta=2$ & $\delta=0.5$ \\ \hline
\multirow{10}{*}{\rotatebox[origin=c]{90}{\centering SNR}} 
 & 0.05   & \textbf{\num{52.1967}} &  \textbf{\num{52.1967}}  & \textbf{\num{52.1967}} &  \textbf{\num{52.1967}} &
\multirow{10}{*}{\num{42.8055}}  & \multirow{10}{*}{\num{47.8471}}  & \multirow{10}{*}{\cred{\num{50.4484}}} & \multirow{10}{*}{\num{41.1964}} & \multirow{10}{*}{\num{30.1605}} & \multirow{10}{*}{\num{37.6823}} \\ \cline{2-6} 
 & 0.1  & \textbf{\num{52.1967}} &  \textbf{\num{52.1967}}  & \textbf{\num{52.1967}} &  \textbf{\num{52.1967}} & & & & & &  \\ \cline{2-6}
 & 0.15 & \textbf{\num{52.1967}} &  \textbf{\num{52.1967}}  & \textbf{\num{52.1967}} &  \textbf{\num{52.1967}} & & & & & & \\ \cline{2-6}
 & 0.2  & \textbf{\num{52.1967}} &  \textbf{\num{52.1967}}  & \textbf{\num{52.1967}} &  \textbf{\num{52.1967}} & & & & & &  \\ \cline{2-6}
 & 0.25 & \textbf{\num{52.1967}} &  \textbf{\num{52.1967}}  & \textbf{\num{52.1967}} &  \textbf{\num{52.1967}} & & & & & & \\ \cline{2-6}
 & 0.5  & \textbf{\num{52.1967}} &  \textbf{\num{52.1967}}  & \textbf{\num{52.1967}} &  \textbf{\num{52.1967}} & & & & & & \\ \cline{2-6}
 & 0.75 & \textbf{\num{52.1967}} &  \textbf{\num{52.1967}}  & \textbf{\num{52.1967}} &  \textbf{\num{52.1967}} & & & & & & \\ \cline{2-6}
 & 1    & \num{50.4025} &  \num{47.6285} &  \num{46.1481} &  \num{44.8749} & & & & & & \\ \cline{2-6}
 & 1.25 & \num{29.8757} &  \num{32.1398} &  \num{34.8351} &  \num{38.4300} & & & & & & \\ \cline{2-6}
 & 1.5  & \num{10.0505} &  \num{34.1361} &  \num{38.1895} &  \num{41.2677} & & & & & & \\ \cline{2-12}
 \hline \hline

\multirow{10}{*}{\rotatebox[origin=c]{90}{\centering TSNR}}
 & 0.05 &  \textbf{\num{52.1967}}  &  \textbf{\num{52.1967}}  &  \textbf{\num{52.1967}}  &   \textbf{\num{52.1967}} & 
\multirow{10}{*}{\num{47.8164}}  & \multirow{10}{*}{\num{49.8708}} & \multirow{10}{*}{\cred{\num{50.4489}}} & \multirow{10}{*}{\num{44.9513}} & \multirow{10}{*}{\num{42.1260}} & \multirow{10}{*}{\num{46.5745}} \\ \cline{2-6} 
 & 0.1  & \textbf{\num{52.1967}} &  \textbf{\num{52.1967}}  & \textbf{\num{52.1967}} &  \textbf{\num{52.1967}} & & & & & & \\ \cline{2-6}
 & 0.15 & \textbf{\num{52.1967}} &  \textbf{\num{52.1967}}  & \textbf{\num{52.1967}} &  \textbf{\num{52.1967}} & & & & & & \\ \cline{2-6}
 & 0.2  & \textbf{\num{52.1967}} &  \textbf{\num{52.1967}}  & \textbf{\num{52.1967}} &  \textbf{\num{52.1967}} & & & & & & \\ \cline{2-6}
 & 0.25 & \textbf{\num{52.1967}} &  \textbf{\num{52.1967}}  & \textbf{\num{52.1967}} &  \textbf{\num{52.1967}} & & & & & & \\ \cline{2-6}
 & 0.5  & \textbf{\num{52.1967}} &  \textbf{\num{52.1967}} & \textbf{\num{52.1967}} &  \textbf{\num{52.1967}} & & & & & & \\ \cline{2-6}
 & 0.75 & \textbf{\num{52.1967}} &  \textbf{\num{52.1967}}  & \textbf{\num{52.1967}} &  \textbf{\num{52.1967}} & & & & & & \\ \cline{2-6}
 & 1   & \num{51.1751} &  \num{49.6989}  & \num{48.8950} &  \num{47.8519} & & & & & & \\ \cline{2-6}
 & 1.25 & \num{38.8529} &  \num{40.8117}  & \num{42.7315} &  \num{44.1649} & & & & & & \\ \cline{2-6}
 & 1.5  & \num{35.8070}  & \num{41.2168}  & \num{43.4731} &  \num{45.4011} & & & & & & \\ \cline{2-12}
 \hline \hline
    
\multirow{10}{*}{\rotatebox[origin=c]{90}{\centering Sparsity}}
 & 0.05 & \textbf{48} & \textbf{48} & \textbf{48} & \textbf{48} & \multirow{10}{*}{236}  & \multirow{10}{*}{77}  & \multirow{10}{*}{\cred{\textbf{\num{48}}}} & \multirow{10}{*}{248}  &\multirow{10}{*}{545}   & \multirow{10}{*}{435}   \\ \cline{2-6}
 & 0.1  & \textbf{48} & \textbf{48} & \textbf{48} & \textbf{48} & & & & & &   \\ \cline{2-6}
 & 0.15 & \textbf{48} & \textbf{48} & \textbf{48} & \textbf{48} & & & & & &   \\ \cline{2-6}
 & 0.2  & \textbf{48} & \textbf{48} & \textbf{48} & \textbf{48} & & & & & &   \\ \cline{2-6}
 & 0.25 & \textbf{48} & \textbf{48} & \textbf{48} & \textbf{48} & & & & & &   \\ \cline{2-6}
 & 0.5  & \textbf{48} & \textbf{48} & \textbf{48} & \textbf{48} & & & & & &  \\ \cline{2-6}
 & 0.75 & \textbf{48} & \textbf{48} & \textbf{48} & \textbf{48} & & & & & & \\ \cline{2-6}
 & 1   & 49 & 59 & 76 & 109 & & & & & &  \\ \cline{2-6}
 & 1.25 & 502 & 457 & 381 & 318 & & & & & &  \\ \cline{2-6}
 & 1.5  & 904 & 396 & 309 & 267 & & & & & &  \\ \cline{2-12}
 \hline
\end{tabular}}
}
\end{minipage}
\hfill\hfill
\begin{minipage}[t]{0.49\linewidth}
\sisetup{round-precision=2}
{
\resizebox{\columnwidth}{!}{\begin{tabular}{|c|c|c|c|c|c|c|c|c|c|c|c|} \cline{3-12}
\multicolumn{2}{c|}{}  & \multicolumn{10}{c|}{$\sigma \approx 0.163$ ($0.2\%$ of the dataset A maximum amplitude)}  \\ \cline{3-12} 
\multicolumn{2}{c|}{}  & \multicolumn{4}{c|}{\cred{SPOQ}} &  \multirow{2}{*}{$\ell_0$}  & \multirow{2}{*}{$\ell_1$} & \cred{SCAD} & Cauchy & Welsch & \celo \\ \cline{2-6}
\multicolumn{1}{c|}{} & \backslashbox{$p$}{$q$} & 2 & 3 & 5 & 10 & & & \cred{$a=2.75,\; \delta=1.25$} & $\delta=2$  & $\delta=2$ & $\delta=0.5$ \\ \hline

\multirow{10}{*}{\rotatebox[origin=c]{90}{\centering SNR}} 

& 0.05 &  \num{31.8303} &  \num{31.1845} &  \num{32.7764}  & \num{37.2335} & 
\multirow{10}{*}{\num{36.7761}}  & \multirow{10}{*}{\num{41.8415}} & \multirow{10}{*}{\cred{\textbf{\num{43.2003}}}} & \multirow{10}{*}{\num{32.3277}} & \multirow{10}{*}{\num{25.1867}} & \multirow{10}{*}{\num{26.9815}} \\ \cline{2-6}
& 0.1  &  \num{31.8303} &  \num{31.1845}  &\num{34.3790}  & \num{38.6395} & & & & & &  \\ \cline{2-6}
& 0.15 &  \num{31.8278} &  \num{31.1845} &  \num{37.2335}  & \num{38.6649} & & & & & &  \\ \cline{2-6}
& 0.2  &  \num{31.8356} &  \num{31.1845}  & \num{38.6395} &  \num{38.6649} & & & & & &  \\ \cline{2-6}
& 0.25 &  \num{31.8494} &  \num{31.1845} &  \num{38.6395} &  \num{38.6649} & & & & & &  \\ \cline{2-6}
& 0.5  &  \num{33.4494} &  \num{38.6649}  & \num{39.9892} &  \num{39.9892} & & & & & &  \\ \cline{2-6}
& 0.75 &  \num{39.6620} &  \num{39.6787} &  \num{40.0177} &  \num{40.0184} & & & & & &  \\ \cline{2-6}
& 1   &  \num{42.7212} &  \num{40.3245} &  \num{40.0551} & \num{38.6998} & & & & & &  \\ \cline{2-6}
& 1.25 &  \num{25.7828} &  \num{24.1823} &  \num{27.9907}  & \num{31.8170} & & & & & & \\ \cline{2-6}
& 1.5  &  \num{-4.7756} &  \num{23.0394}  & \num{30.3910} &  \num{34.9471} & & & & & & \\ \cline{2-12}
\hline \hline
   
\multirow{10}{*}{\rotatebox[origin=c]{90}{\centering TSNR}} 
& 0.05 & \num{34.4757}  & \num{33.3160}  & \num{34.6930} &  \num{38.5110}  &  \multirow{10}{*}{\num{41.7914}} & \multirow{10}{*}{{\num{43.8316}}} & \multirow{10}{*}{\cred{\num{43.7984}}} & \multirow{10}{*}{\num{37.1451}} & \multirow{10}{*}{\num{37.3378}} & \multirow{10}{*}{\num{41.1331}} \\ \cline{2-6}
   
& 0.1  &  \num{34.4757}  & \num{33.3160}  & \num{36.0759} &  \num{39.7377} & & & & & &  \\ \cline{2-6}
& 0.15 &  \num{34.4764} &  \num{33.3160} &  \num{38.5110}  & \num{39.7508} & & & & & &  \\ \cline{2-6}
& 0.2  &  \num{34.4351} &  \num{33.3160} &  \num{39.7377} & \num{ 39.7508} & & & & & &  \\ \cline{2-6}
& 0.25 &  \num{34.4391} &  \num{33.3160} &  \num{39.7377} &  \num{39.7508} & & & & & &  \\ \cline{2-6}
& 0.5  &  \num{35.8299} &  \num{39.7508} &  \num{40.8620} &  \num{40.8620} & & & & & &  \\ \cline{2-6}
& 0.75 &  \num{40.6518} &  \num{40.6471} &  \num{40.8643}  & \num{40.8497} & & & & & & \\ \cline{2-6}
& 1   &  \textbf{\num{43.9063}} & \num{42.6834} &  \num{42.8221}  & \num{41.7592} & & & & & &  \\ \cline{2-6}
& 1.25 &  \num{34.1788} & \num{33.4398} &  \num{36.4043}  &  \num{38.0950} & & & & & &\\ \cline{2-6}
& 1.5  &  \num{28.9497}  & \num{33.9638} &  \num{36.6544}  &  \num{39.3231} & & & & & & \\ \cline{2-12}
\hline \hline

\multirow{10}{*}{\rotatebox[origin=c]{90}{\centering Sparsity}} 
& 0.05 &  59   & 49  &  49 &   \textbf{48} & 
\multirow{10}{*}{236}  & \multirow{10}{*}{75} & \multirow{10}{*}{\cred{\num{49}}} & \multirow{10}{*}{310} & \multirow{10}{*}{496} & \multirow{10}{*}{513} \\ \cline{2-6}
& 0.1  &  59   & 49  &  \textbf{48}  &  \textbf{48} & & & & & & \\ \cline{2-6}
& 0.15 &  59   & 49  &  \textbf{48}  &  \textbf{48} & & & & & & \\ \cline{2-6}
& 0.2  &  59   & 49  &  \textbf{48}  &  \textbf{48} & & & & & & \\ \cline{2-6}
& 0.25 &  59   & 49  &  \textbf{48}  &  \textbf{48} & & & & & & \\ \cline{2-6}
& 0.5  &  59 &  \textbf{48} &  \textbf{48} &  \textbf{48} & & & & & &  \\ \cline{2-6}
& 0.75 &  \textbf{48} &  \textbf{48} &  \textbf{48} &  49 & & & & & &  \\ \cline{2-6}
& 1   &  51 &  64 &  75 & 118 & & & & & &  \\ \cline{2-6}
& 1.25 & 386 & 511 & 410 & 330 & & & & & &  \\ \cline{2-6}
& 1.5  & 957 & 480 & 345 & 273 & & & & & &  \\ \cline{2-12}
\hline
\end{tabular}}
}
\end{minipage}
\caption{Dataset A ($N=1000$, $P=48$): comparison of SNR, TSNR and sparsity degree values averaged on 10 noise realizations using SPOQ with different $p \in ]0,2[$ and $q \in [2, + \infty[$ and some other regularization functions.}
\label{tab:snrlplqA}
\end{table*}

\begin{table*}[ht!]
\tiny
\centering 
\renewcommand{\arraystretch}{1.2}
\setlength{\tabcolsep}{0.11cm}
\begin{minipage}[t]{0.49\linewidth}
\sisetup{round-precision=2}
{
\resizebox{\columnwidth}{!}{\begin{tabular}{|c|c|c|c|c|c|c|c|c|c|c|c|} \cline{3-12}
\multicolumn{2}{c|}{}  & \multicolumn{10}{c|}{$\sigma \approx 0.087$ ($0.1\%$ of the dataset B maximum amplitude)}  \\ \cline{3-12} 
\multicolumn{2}{c|}{}  & \multicolumn{4}{c|}{\cred{SPOQ}} &  \multirow{2}{*}{$\ell_0$} & \multirow{2}{*}{$\ell_1$} & \cred{SCAD} & Cauchy & Welsch & \celo\\ \cline{2-6} 
\multicolumn{1}{c|}{} & \backslashbox{$p$}{$q$} & 2 &	3  & 5 & 10 & & & \cred{$a=3.75,\; \delta=2.75$} & $\delta=100$  & $\delta=5$ & $\delta=0.5$ \\ \hline
\multirow{10}{*}{\rotatebox[origin=c]{90}{\centering SNR}}
 & 0.05 &  \textbf{\num{51.7792}} &  \textbf{\num{51.7792}} & \textbf{\num{51.7792}} &  \textbf{\num{51.7792}} & \multirow{10}{*}{\num{42.6961}} & \multirow{10}{*}{\num{45.3868}} & \multirow{10}{*}{\cred{\num{51.4788}}} & \multirow{10}{*}{\num{39.9649}} & \multirow{10}{*}{\num{37.2220}} & \multirow{10}{*}{\num{39.1707}}\\ \cline{2-6}
& 0.1  & \textbf{\num{51.7792}} &  \textbf{\num{51.7792}} & \textbf{\num{51.7792}} &  \textbf{\num{51.7792}} & & & & & & \\ \cline{2-6}
& 0.15 & \textbf{\num{51.7792}} &  \textbf{\num{51.7792}} & \textbf{\num{51.7792}} &  \textbf{\num{51.7792}} & & & & & &  \\ \cline{2-6}
& 0.2  & \textbf{\num{51.7792}} &  \textbf{\num{51.7792}} & \textbf{\num{51.7792}} &  \textbf{\num{51.7792}} & & & & & &  \\ \cline{2-6}
& 0.25 & \textbf{\num{51.7792}} &  \textbf{\num{51.7792}} & \textbf{\num{51.7792}} &  \textbf{\num{51.7792}} & & & & & & \\ \cline{2-6}
& 0.5  & \textbf{\num{51.7792}} &  \textbf{\num{51.7792}} & \textbf{\num{51.7792}} &  \textbf{\num{51.7792}} & & & & & & \\ \cline{2-6}
& 0.75 & \textbf{\num{51.7792}} &  \textbf{\num{51.7792}} & \textbf{\num{51.7792}} &  \textbf{\num{51.7792}} & & & & & & \\ \cline{2-6}
& 1    & \num{47.8159} &  \num{46.0837} &  \num{44.1265} &  \num{44.3465} & & & & & & \\ \cline{2-6}
& 1.25 & \num{32.2133} &  \num{31.5044}  & \num{34.8987}  & \num{38.6441} & & & & & & \\ \cline{2-6}
& 1.5  & \num{10.2535} &  \num{16.7105} &  \num{36.4629}  & \num{40.3559} & & & & & & \\ \cline{2-6}
\hline \hline

\multirow{10}{*}{\rotatebox[origin=c]{90}{\centering TSNR}} 
& 0.05 &  \textbf{\num{51.7792}} &  \textbf{\num{51.7792}} & \textbf{\num{51.7792}} &  \textbf{\num{51.7792}} & 
\multirow{10}{*}{\num{46.5223}} & \multirow{10}{*}{\num{47.8926}} & \multirow{10}{*}{\cred{\num{51.4791}}} & \multirow{10}{*}{\num{43.0911}} & \multirow{10}{*}{\num{44.7883}} & \multirow{10}{*}{\num{46.6922}}\\ \cline{2-6}

& 0.1  &  \textbf{\num{51.7792}} &  \textbf{\num{51.7792}} & \textbf{\num{51.7792}} &  \textbf{\num{51.7792}} & & & & & &  \\ \cline{2-6}

& 0.15 &  \textbf{\num{51.7792}} &  \textbf{\num{51.7792}} & \textbf{\num{51.7792}} &  \textbf{\num{51.7792}} & & & & & &  \\ \cline{2-6}

& 0.2  &  \textbf{\num{51.7792}} &  \textbf{\num{51.7792}} & \textbf{\num{51.7792}} &  \textbf{\num{51.7792}} & & & & & & \\ \cline{2-6}

& 0.25 &  \textbf{\num{51.7792}} &  \textbf{\num{51.7792}} & \textbf{\num{51.7792}} &  \textbf{\num{51.7792}} & & & & & & \\ \cline{2-6}

& 0.5  &  \textbf{\num{51.7792}} &  \textbf{\num{51.7792}} & \textbf{\num{51.7792}} &  \textbf{\num{51.7792}} & & & & & & \\ \cline{2-6}

& 0.75 &  \textbf{\num{51.7792}} &  \textbf{\num{51.7792}} & \textbf{\num{51.7792}} &  \textbf{\num{51.7792}} & & & & & & \\ \cline{2-6}

& 1    &  \num{49.1673} &  \num{48.2499}  &  \num{46.6108}  & \num{46.8162} & & & & & & \\ \cline{2-6}
& 1.25 &  \num{38.6805} &  \num{38.8708}  &  \num{40.6140}  & \num{43.0939} & & & & & & \\ \cline{2-6}
& 1.5  &  \num{35.2899} &  \num{35.9820}  &  \num{41.6147}  & \num{43.8278} & & & & & & \\ \cline{2-6}
    
\hline \hline
\multirow{10}{*}{\rotatebox[origin=c]{90}{\centering Sparsity}} 
& 0.05 & \textbf{94} & \textbf{94}  &  \textbf{94} &  \textbf{94}  & \multirow{10}{*}{297} & \multirow{10}{*}{157} &  \multirow{10}{*}{\cred{\textbf{\num{94}}}}  & \multirow{10}{*}{330} & \multirow{10}{*}{413} & \multirow{10}{*}{484}\\ \cline{2-6}
& 0.1  & \textbf{94} & \textbf{94}  &  \textbf{94} &  \textbf{94 }& & & & & & \\ \cline{2-6}
& 0.15 & \textbf{94} &\textbf{94}  &  \textbf{94} &  \textbf{94} & & & & & &  \\ \cline{2-6}
& 0.2  & \textbf{94} & \textbf{94}  &  \textbf{94} &  \textbf{94} & & & & & &  \\ \cline{2-6}
& 0.25 & \textbf{94}  & \textbf{94} &  \textbf{94} &  \textbf{94} & & & & & & \\ \cline{2-6}
& 0.5  & \textbf{94}  & \textbf{94} &  \textbf{94 }&  \textbf{94} & & & & & & \\ \cline{2-6}
& 0.75 & \textbf{94}  & \textbf{94} &  \textbf{94} &  \textbf{94} & & & & & & \\ \cline{2-6}
& 1    & 100 & 121 &  169 & 206 & & & & & & \\ \cline{2-6}
& 1.25 & 454 & 567 &  471 & 385 & & & & & & \\ \cline{2-6}
& 1.5  & 904 & 834 &  433 & 345 & & & & & & \\ \cline{1-12}
\end{tabular}}
}
\end{minipage}
\hfill\hfill
\begin{minipage}[t]{0.49\linewidth}
\sisetup{round-precision=2}
{
\resizebox{\columnwidth}{!}{\begin{tabular}{|c|c|c|c|c|c|c|c|c|c|c|c|} \cline{3-12}
\multicolumn{2}{c|}{}  & \multicolumn{10}{c|}{$\sigma \approx 0.174$ ($0.2 \%$ of the dataset B maximum amplitude)}  \\ \cline{3-12} 
\multicolumn{2}{c|}{}  & \multicolumn{4}{c|}{\cred{SPOQ}} &  \multirow{2}{*}{$\ell_0$}  & \multirow{2}{*}{$\ell_1$} & \cred{SCAD} &Cauchy & Welsch & \celo\\ \cline{2-6} 
\multicolumn{1}{c|}{} & \backslashbox{$p$}{$q$} & 2 &	3  & 5 & 10 &  & & \cred{$a=2.75,\; \delta=1.25$} & $\delta=90$  & $\delta=5$ & $\delta=0.5$ \\ \hline
\multirow{10}{*}{\rotatebox[origin=c]{90}{\centering SNR}} 

& 0.05 &  \num{33.6774} &  \num{33.6774}  & \num{31.9163}  & \num{30.9107} & 
\multirow{10}{*}{\num{45.4554}} & \multirow{10}{*}{39.1028} & \multirow{10}{*}{\cred{\num{42.5086}}} & \multirow{10}{*}{\num{31.6401}} & \multirow{10}{*}{29.1765} & \multirow{10}{*}{\num{30.7932}}\\ \cline{2-6} 
 & 0.1  &  \num{33.6774}  & \num{33.6774}  & \num{32.4214} &  \num{31.5043} & & & & & &  \\ \cline{2-6}
 & 0.15 &  \num{33.8662}  & \num{33.8662}  & \num{32.5359} & \num{31.5566} & & & & & &  \\ \cline{2-6}
 & 0.2  &  \num{33.4058} &  \num{33.3954}  & \num{32.9254} &  \num{32.9888} & & & & & &  \\ \cline{2-6}
 & 0.25 &  \num{33.5835} &  \num{33.9269}  & \num{34.4602} & \num{33.3914} & & & & & &  \\ \cline{2-6}
 & 0.5  &  \num{40.9727} &  \num{40.2355}  & \num{42.4438} & \num{36.7684} & & & & & &  \\ \cline{2-6}
 & 0.75 &  \textbf{\num{45.7586}} &  \textbf{\num{45.7586}}& \textbf{\num{45.7586}} &  \num{41.9264} & & & & & &  \\ \cline{2-6}
 & 1   &   \num{41.9905} & \num{39.6167} & \num{38.6775}  & \num{38.5151} & & & & & &  \\ \cline{2-6}
 & 1.25 &  \num{25.8749} & \num{25.1590} & \num{26.4849}  & \num{32.2250} & & & & & &  \\ \cline{2-6}
 & 1.5  &  \num{0.0346}  & \num{3.2290}  & \num{27.5926} &  \num{33.9418} & & & & & &  \\ \cline{2-6}
\hline \hline

\multirow{10}{*}{\rotatebox[origin=c]{90}{\centering TSNR}} 
& 0.05 & \num{35.3875}  & \num{35.3875}  & \num{33.4864} &  \num{32.6164} & 
\multirow{10}{*}{\textbf{\num{45.4598}}} & \multirow{10}{*}{\num{41.5453}} & \multirow{10}{*}{\cred{\num{43.3415}}} & \multirow{10}{*}{\num{34.8238}} & \multirow{10}{*}{\num{37.6808}} & \multirow{10}{*}{\num{41.5374}}\\ \cline{2-6}

& 0.1  & \num{35.3875} &  \num{35.3875} &  \num{33.5766}  & \num{32.8673} & & & & & &  \\ \cline{2-6}
& 0.15 & \num{35.6548} &  \num{35.6548} &  \num{33.4765}  & \num{32.9279} & & & & & &  \\ \cline{2-6}
& 0.2  & \num{35.0129} &  \num{35.0024} &  \num{33.9797} &  \num{34.1767} & & & & & &  \\ \cline{2-6}
& 0.25 & \num{35.2704} &  \num{35.5057} &  \num{35.1311} & \num{34.5387} & & & & & & \\ \cline{2-6}
& 0.5  & \num{41.8128} &  \num{40.6373} & \num{42.6521}  & \num{38.0371} & & & & & &  \\ \cline{2-6}
& 0.75 & \textbf{\num{45.7586}} &  \textbf{\num{45.7586}} &  \textbf{\num{45.7586}} &  \num{42.6635} & & & & & &  \\ \cline{2-6}
& 1   & \num{43.0874} &  \num{41.7618} &  \num{41.2854} & \num{41.0052} & & & & & &  \\ \cline{2-6}
& 1.25 & \num{32.3486} &  \num{32.6567} & \num{33.6302} & \num{36.9011} & & & & & &  \\ \cline{2-6}
& 1.5  & \num{28.8606} &  \num{29.5998} & \num{33.9029} &  \num{37.6653} & & & & & &  \\ \cline{2-6}

\hline \hline
\multirow{10}{*}{\rotatebox[origin=c]{90}{\centering Sparsity}} 
& 0.05 & 95 & 95 & \textbf{94} & \textbf{94} & 
\multirow{10}{*}{\textbf{94}} & \multirow{10}{*}{155} & \multirow{10}{*}{\cred{\num{96}}} & \multirow{10}{*}{330} & \multirow{10}{*}{472} & \multirow{10}{*}{531}\\ \cline{2-6}
& 0.1  & 95 & 95 & \textbf{94} & \textbf{94}  & & & & & &  \\ \cline{2-6}
& 0.15 & 95 & 95 & \textbf{94} & \textbf{94}  & & & & & &  \\ \cline{2-6}
& 0.2  & \textbf{94} & \textbf{94} & \textbf{94} & 93  & & & & & &  \\ \cline{2-6}
& 0.25 & \textbf{94}  & \textbf{94} &  93 &  93 & & & & & & \\ \cline{2-6}
& 0.5  & \textbf{94}  & \textbf{94} &  \textbf{94} &  \textbf{94} & & & & & & \\ \cline{2-6}
& 0.75 & \textbf{94}  & \textbf{94} &  \textbf{94} &  \textbf{94} & & & & & & \\ \cline{2-6}
& 1    & 97  & 124 &  176 & 208 & & & & & & \\ \cline{2-6}
& 1.25 & 439 & 559 &  534 & 397 & & & & & & \\ \cline{2-6}
& 1.5  & 955 & 893 &  513 & 355 & & & & & & \\ \cline{1-12}
\end{tabular}}
}
\end{minipage}
\caption{Dataset B ($N=1000$, $P=94$): comparison of SNR, TSNR and sparsity degree values averaged on 10 noise realizations using SPOQ with different $p \in ]0,2[$ and $q \in [2, + \infty[$  and some other regularization functions.}
\label{tab:snrlplqB}
\end{table*}

\subsubsection{Advantage of trust-regions}

Figure \ref{Fig:TestTimexAxB} shows the convergence profile, in terms of SNR evolution, of the trust-region VMFB algorithm \eqref{alg:TRVMFB}, the VMFB algorithm and the FB algorithm, to recover datasets A and B when \cred{$p=0.75$} and $q=2$, for a given noise realization. Let us remind that VMFB algorithm corresponds to \eqref{eq:proxineexact}. Here, we set $\eA_k = \eA_{q,0}(\ex_k)$ and $\gamma_k = 1.9$ for every $k \in \eN$. FB algorithm is obtained by setting $\eA_k = L \, \eI_N$ and $\gamma_k = 1.9$ in \eqref{eq:proxineexact}, where $L$ is the Lipschitz constant given by \eqref{eq:LipSPOQ}. It is worth noting that our trust-region VMFB algorithm converges much faster than the two other variants, which themselves behave here quite similarly. 
\cred{As already stated, the numerical value of the Lipschitz constant $L$ can reach high values, when the smoothing parameters are small. For instance, $L= 1.016 \times 10^{12}$ in the example presented in Figure \ref{Fig:TestTimexAxB}. In practice, one can notice that the potential negative effect of such large constant on the convergence speed is avoided thanks to our proposed trust-region approach. This illustrates the advantage of our local preconditioning scheme.}

\begin{figure}[h]
\centering
\hspace*{-0.5cm}
\begin{tabular}{@{}c@{}}
\includegraphics[width=8cm]{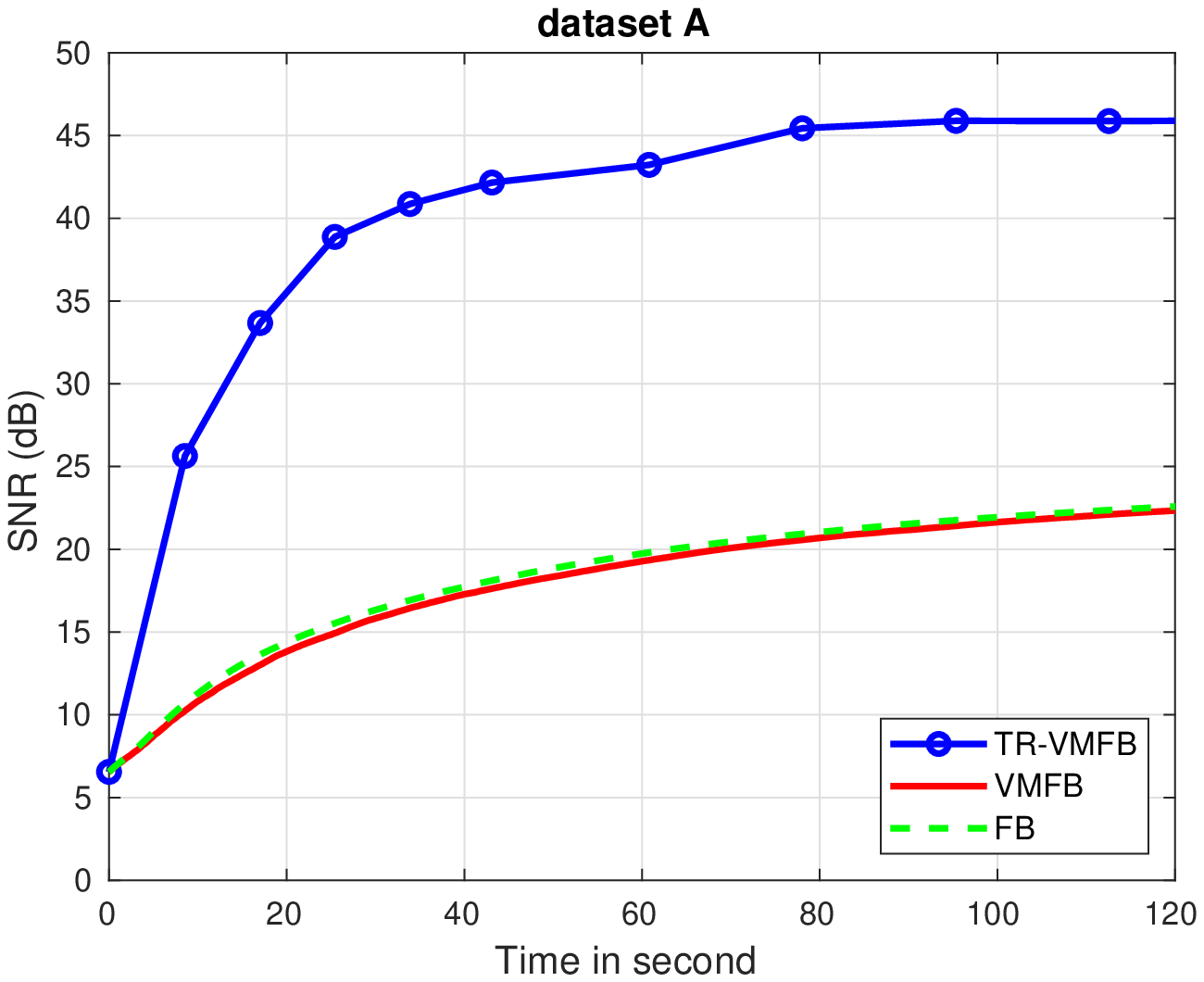} \\
\includegraphics[width=8cm]{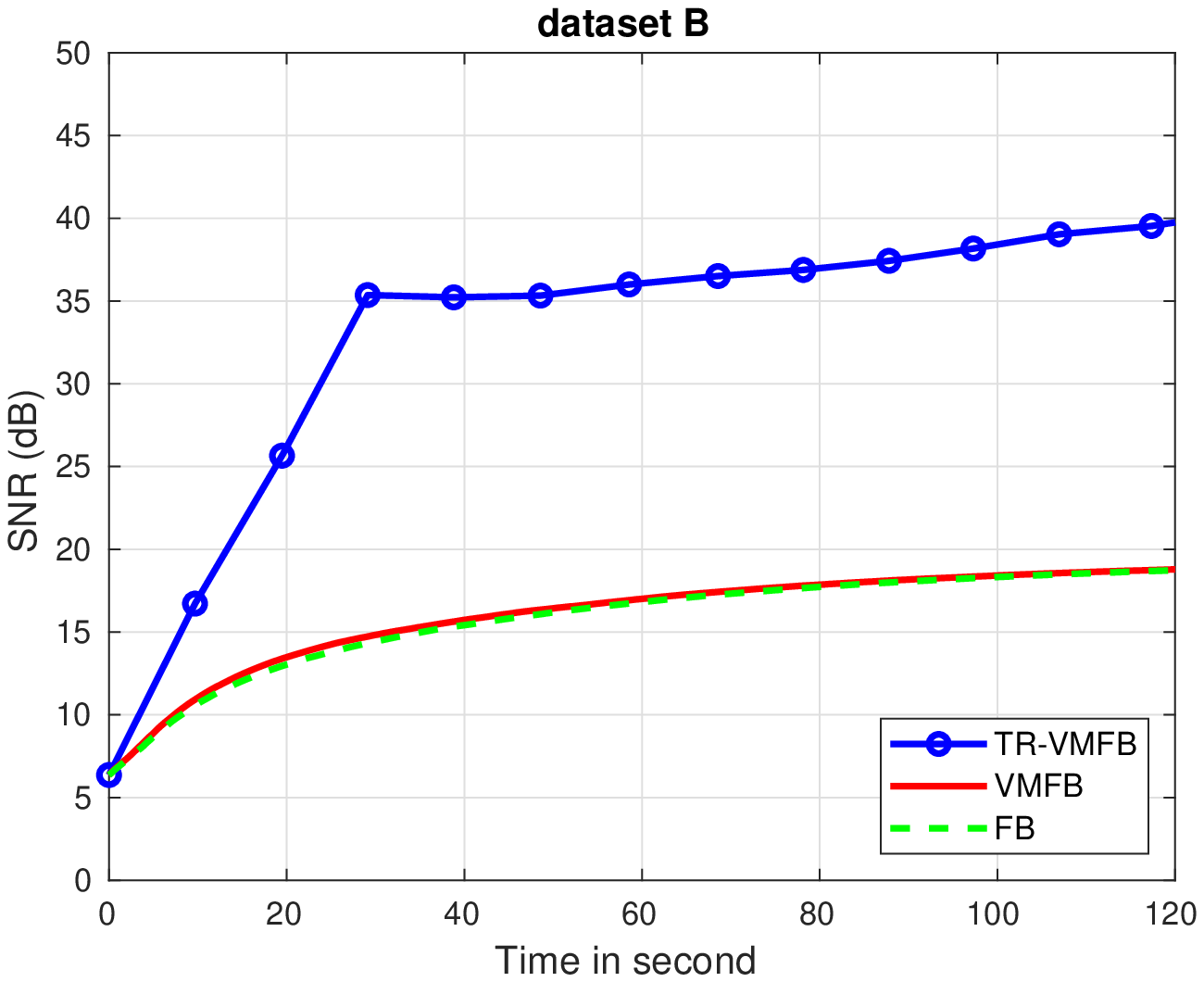} 
\end{tabular}
\caption{SNR evolution along time, for the proposed trust-region VMFB algorithm \ref{alg:TRVMFB}, VMFB algorithm \cite{Chouzenoux_E_2014_j-optim-theory-appl_variable_mfbamsdfcf} and FB algorithm \cred{with $L= 1.016 \times 10^{12}
$}, to process datasets A and B on a given noise realization (relative noise level: \SI[round-precision=1]{0.1}{\percent}).}
\label{Fig:TestTimexAxB}
\end{figure}

\subsubsection{Setting SPOQ parameters}
In all our tests, the smoothing parameters $(\delta, a)$ for Cauchy, Welsch, \cred{SCAD} and \celo penalties were chosen empirically, so as to maximize the final SNR. \eca{Aside, we provide a pairwise sensitivity analysis for the  $\alpha, \beta$ and $\eta$ parameters \eqref{eq:reg_sparse} of SPOQ in Figure \ref{Fig:Testalphabetamu}. We consider dataset A, for the noise level \SI[round-precision=1]{0.1}{\percent}, and the setting $p=0.75$ and $q=2$ as it was observed to lead to the best results in this case. One parameter being fixed, we cover a large span of orders of magnitude for the two others ($\alpha \in [10^{-7}, 10^2]$, $\beta \in [10^{-7}, 10^2]$ and $\eta \in [10^{-7},10^2]$). The first observation is the layered structure of both figures. This is interpreted as the notably weak interdependence of hyperparameters, which is \cred{advantageous}. Secondly, the horizontal red/dark red strip, where the best SNR performance is attained, is relatively large, spanning about one order in magnitude in the tuned parameter. This suggests robustness, with tenuous performance variation through mild parameter imprecision. Thirdly, $\alpha$ seems to have little impact, especially when $\beta$ and $\eta$  are optimized. Note that we did not display the variations for fixed $\alpha$ as we observed that the SNR exhibits non-noticeable value variations. Parameter $\alpha$ essentially controls the $L$-Lipschitz value \eqref{eq:LipSPOQ} and the derivability of $\ell_{p,\alpha}$ at $0$ (see \eqref{eq:lpsmooth}).} 



\begin{figure}[h]
\centering
\begin{tabular}{@{}c@{}}
\includegraphics[width=8cm]{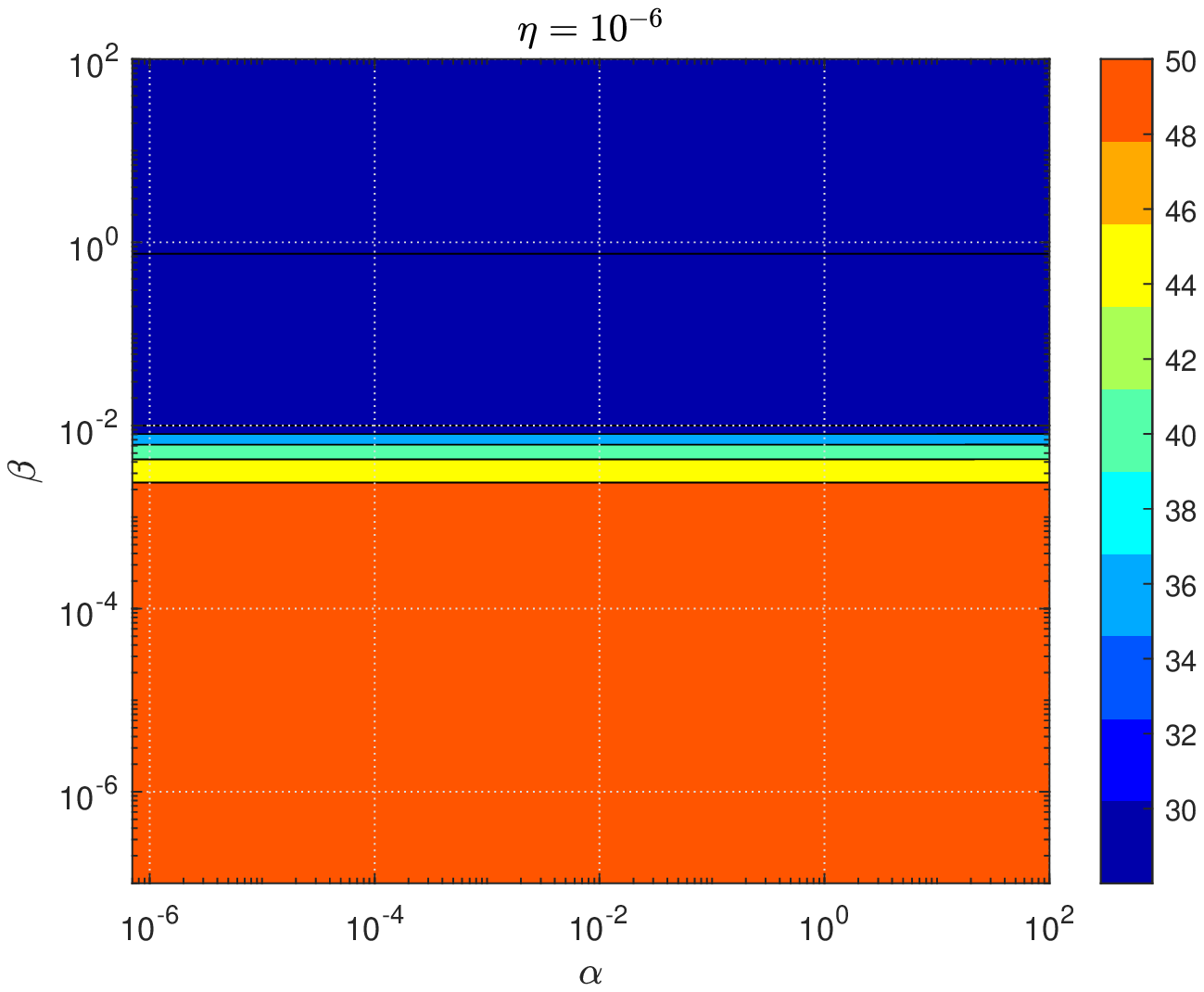}\\
\includegraphics[width=8cm]{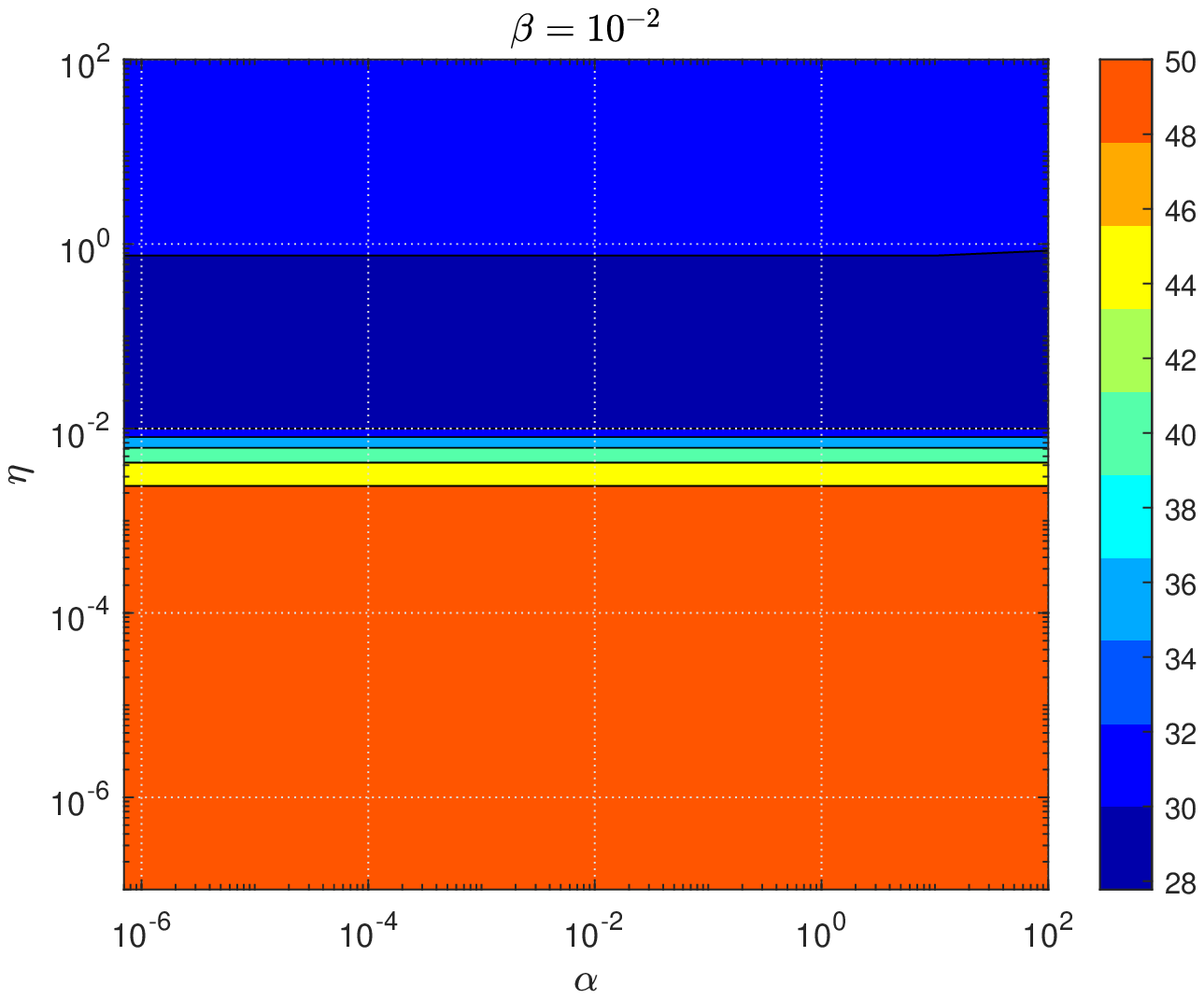}
\end{tabular}
\caption{SNR computed for dataset A ($N=1000$,  $P=48$) using SPOQ regularization with different $\alpha$, $\beta$ and $\eta$ parameters where \eca{$p=0.75$} and $q=2$ (relative noise: \SI[round-precision=1]{0.1}{\percent}).}
\label{Fig:Testalphabetamu}
\end{figure}


\subsubsection{Noise level influence}
Using different penalties ($\ell_0$, $\ell_1$, Cauchy, Welsch, \cred{SCAD}, \celo and two instances of SPOQ), we present  \cred{in Figure \ref{Fig:NoiseLevelTest}} SNR values obtained from datasets A and B reconstruction at different noise levels. \eca{As expected,  SNR for all methods decreases as  noise intensity increases}. Let us remind that the standard deviation $\sigma$ in our case is expressed as a percentage of the MS spectrum maximal amplitude. A noise level greater than $\SI[round-precision=1]{0.1}{\percent}$ corresponds here to a quite high noise level for our datasets, and obviously leads  to a deterioration of reconstruction quality. 
\cred{SPOQ proves its capability to ensure the best quality reconstruction in comparison with others penalties. 
The choice $p=0.75$ and $q=2$ shows its superiority over SOOT (i.e. $p=1$ and $q=2$) for all tested noise levels.}
\cred{Moreover, it is worthy to notice that, on this example, the superiority of SPOQ versus SCAD becomes more significant, as the noise level increases.}
\begin{figure}[h!]
\centering
\begin{tabular}{c}
\includegraphics[width=8cm]{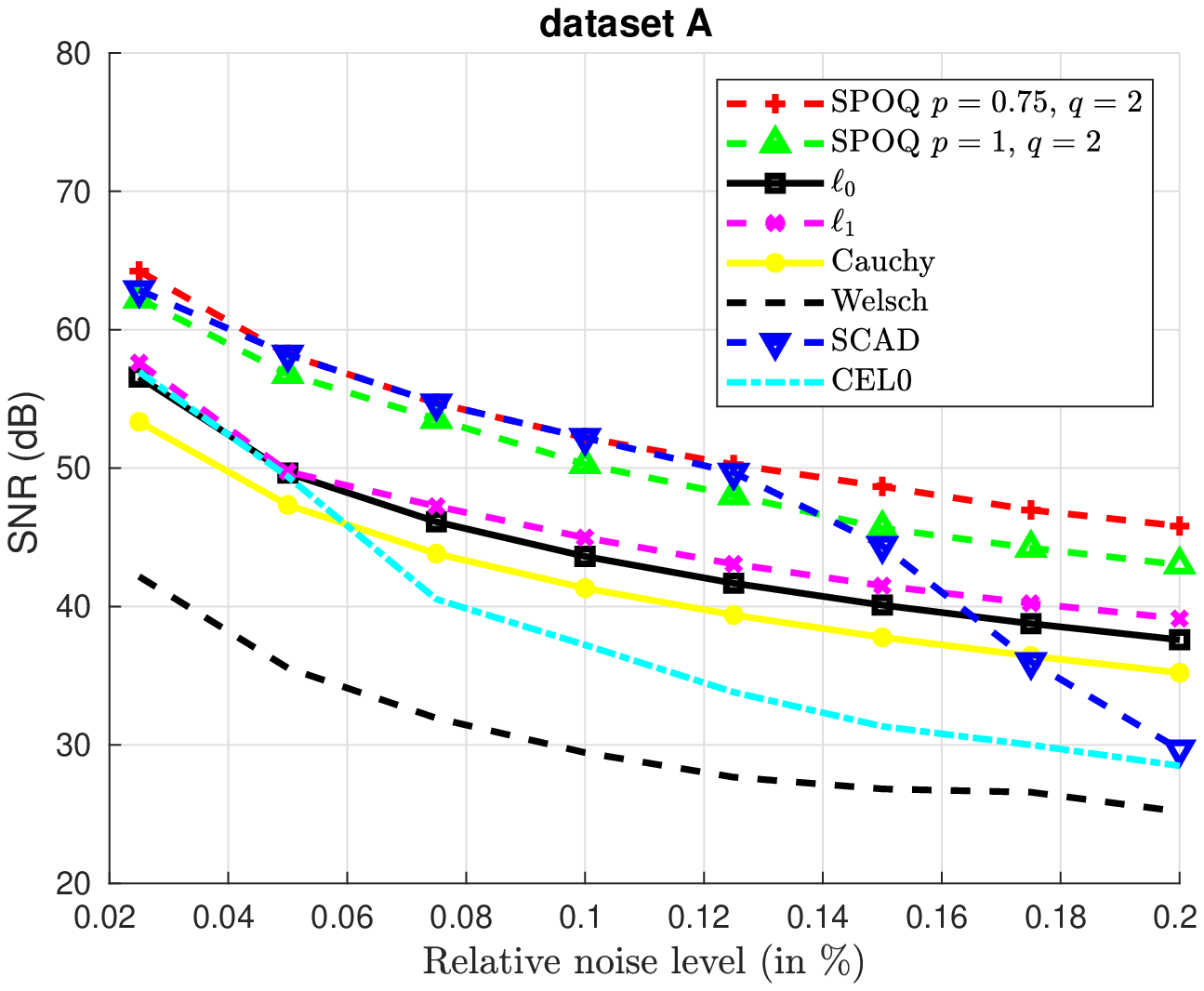}\\
\includegraphics[width=8cm]{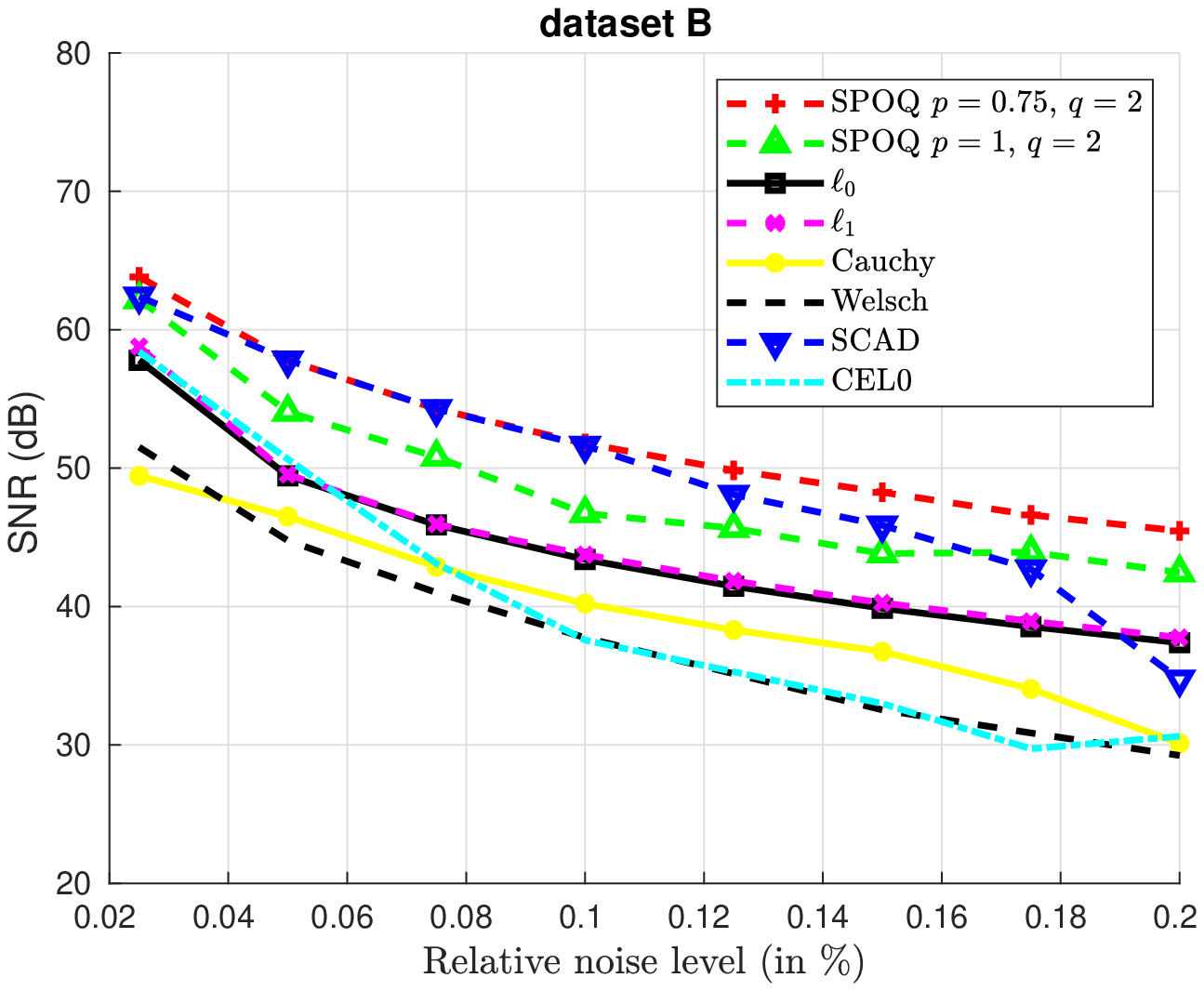}
\end{tabular}
\caption{Influence of noise level ($\sigma$ expressed as a percentage of the MS spectrum maximal amplitude) on quality reconstruction of datasets A (top) and dataset B (bottom) using various penalties: SPOQ \cred{with $p=0.75$ and $q=2$}, SPOQ \cred{with $p=1$ and $q=2$} \cred{ (or SOOT)}, $\ell_0$, $\ell_1$, Cauchy,  Welsch, \cred{SCAD} and \celo (SNR values averaged over 10 noise realizations).}
\label{Fig:NoiseLevelTest}
\end{figure}


\subsubsection{Sparsity level influence}
Our final test consists in evaluating the performance of SPOQ penalty for various sparsity degrees. To do so, we tried out different datasets with a fixed size $N=1000$ and different sparsity degrees $P \in \{ 10, 20, 48, 94, 182, 256, 323, 388 \}$, generated in a similar fashion as in our datasets A and B. We make use of the SPOQ penalty with $p \in \{0.25, 0.75, 1\}$ and $q = 2$. Figure \ref{Fig:Testsparsitydegree} presents the evolution of estimated sparsity degree. As we can see, the latter is well estimated when the signal presents a high sparsity level (Figure \ref{Fig:Testsparsitydegree}, case of $p=0.25$ and $q=2$, $p=0.75$ and $q=2)$. However as $P$ increases, the reconstruction quality of SPOQ where $p=1$ and $q=2$ (i.e., SOOT) tends to worsen. \cred{This confirms the interesting flexibility in setting parameter $p$}.

\begin{figure}[h!]
	\centering
		\begin{tabular}{c}
		\includegraphics[width=8cm]{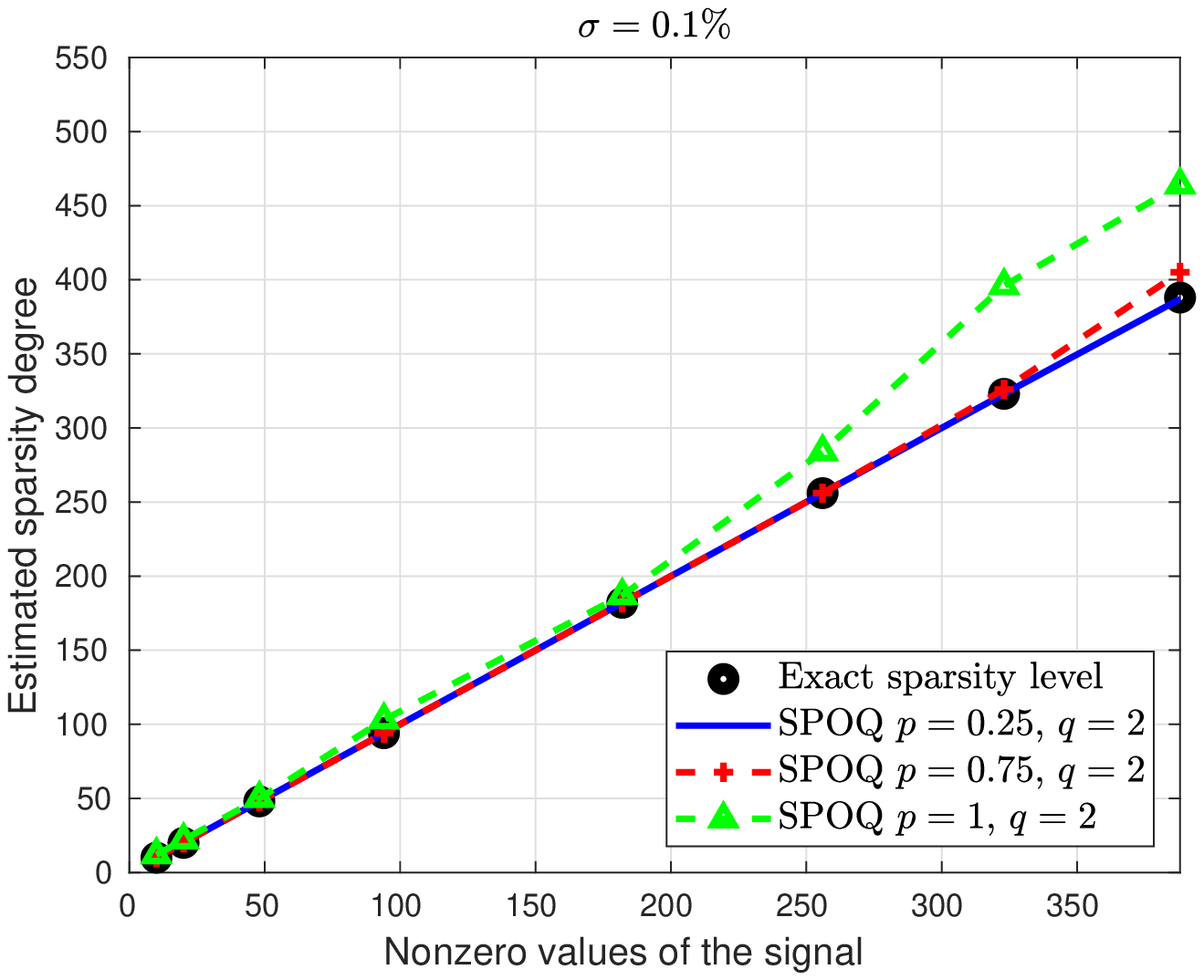}\\
		\includegraphics[width=8cm]{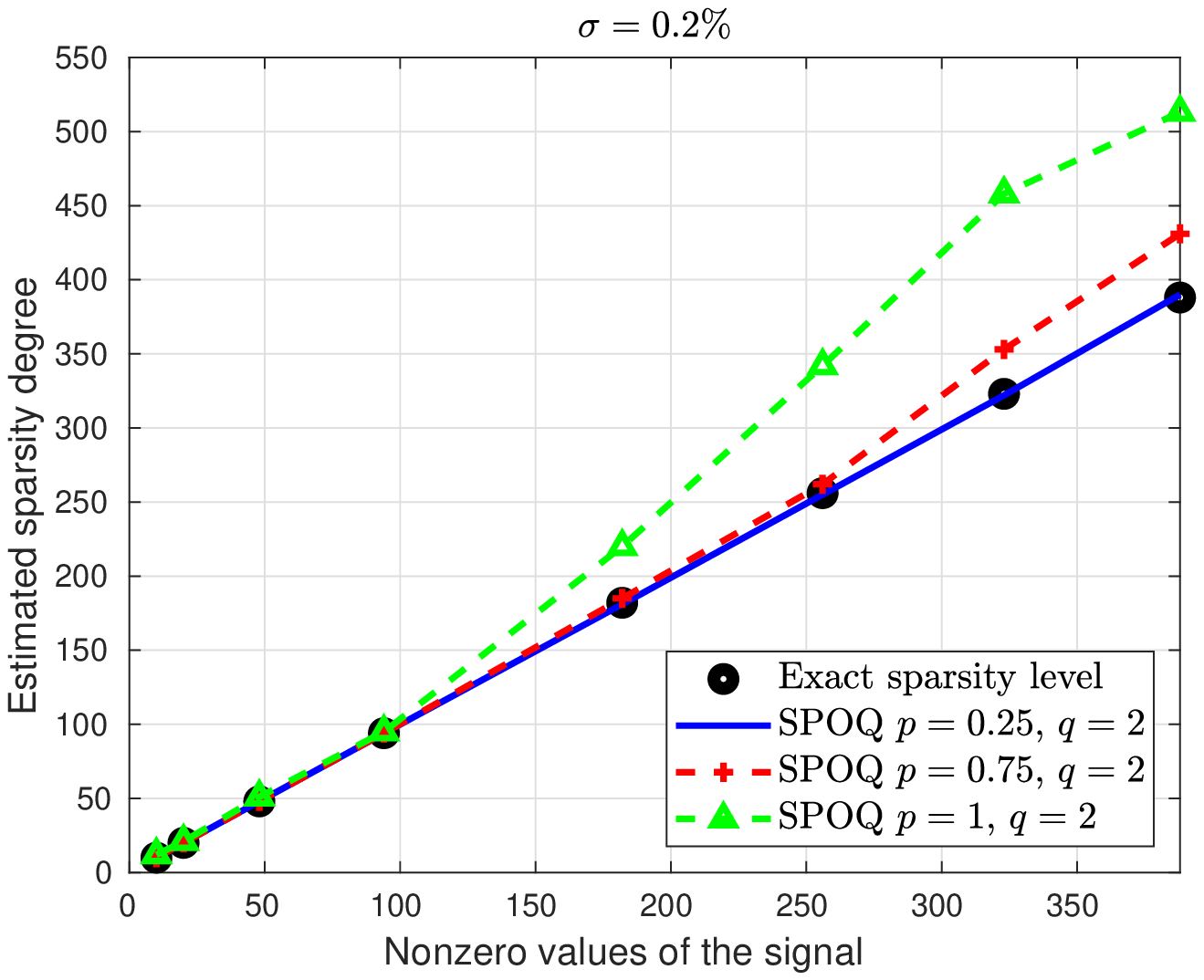}
	\end{tabular}
	\caption{Estimated sparsity degree for different \afa{sparse} signals \afd{sizes} using SPOQ on a single noise realization (relative noise: \SI[round-precision=1]{0.1}{\percent} \cred{(up)} and \SI[round-precision=1]{0.2}{\percent} \cred{(bottom)}).}
	\label{Fig:Testsparsitydegree}
\end{figure}


 

\section{Conclusion}
SPOQ offers scale-invariant penalties, based on  ratios of smoothed quasinorms and norms. These surrogates to the $\ell_0$ count index are non-convex, yet possess Lipschitz regularity, that permits \ldd{to deploy} efficient optimization algorithms based on the majorize-minimize methodology. In particular, we \cred{propose} a novel trust-region approach, that extends the variable metric forward-backward algorithm. On sparse mass-spectrometry peak signals, \cred{SPOQ} outperforms other sparsity penalties for various quality metrics. Moreover,  once the norm exponents are chosen, smoothing hyperparameters are easy to set. Further works include algorithmic acceleration and application to other types of sparse data processing, such as image deconvolution. \cred{SPOQ penalties are finally promising for sparsity estimation,  sparse regression or feature selection}.


%

\appendices
\section{Proof of Proposition \ref{prop:minimiseur}}
\label{Proof0}

First, we have $\nabla \ell_{p,\alpha}^p(\zerob_N) = \nabla \ell_{q,\eta}^q(\zerob_N) = \zerob_N$, so that $\zerob_N$ is a critical point of $\Psi$. 
By using \eqref{e:D2phi1},
\begin{align}
\nabla^2 \Psi_1(\zerob_N) &=  \frac{\alpha^{p-2}}{\beta^p} \eI_N,
\end{align}
with $\eI_N$ identity matrix of $\eR^N$.
If $q>2$, it follows from \eqref{eq_phi-hessian1} that
\begin{align}
\nabla^2  \Psi_2(\zerob_N) 
&= \frac{1}{q}\left( \frac{q(q-1) \DD{((0^{q-2})_{1\leq n \leq N})}}{\eta^q} - \frac{q}{\eta^{2q}}\zerob_{N \times N}  \right)\nonumber\\
&= \zerob_{N \times N},
\end{align}
otherwise, if $q=2$,
\begin{align}
\nabla^2  \Psi_2(\zerob_N) 
&= \frac{1}{2}\left( \frac{2(2-1) \DD{((0^0)_{1 \leq n \leq N})}}{\eta^2} - \frac{2}{\eta^{2q}}\zerob_{N \times N}  \right)\nonumber\\
&= \frac{1}{\eta^2} \eI_N 
\end{align}
since  $0^{0}=1$ by convention. Consequently
\begin{align}
\nabla^2 \Psi_2(\zerob_N)  &= \begin{cases}
\displaystyle \frac{1}{\eta^2} \eI_N & \mbox{if } q = 2, \\
\zerob_{N \times N} & \mbox{elsewhere}.
\end{cases}
\end{align}
According to these results, we deduce that $\nabla^2 \Psi(\zerob_N)$ is a positive definite matrix if ($q = 2$ and $\eta^2 \alpha^{p-2} > \beta^p$) or if $q > 2$. When these conditions are fulfilled, $\zerob_N$ is a local minimizer of $\Psi$.

Let us now show that, under suitable assumptions,
\begin{align}\label{e:psimin0}
(\forall \ex \in \mathbb{R}^{N)}\quad & \Psi(\ex) \ge  \Psi(\zerob_N) \nonumber\\
\Leftrightarrow\quad  & \dfrac{(\ell_{p,\alpha}^p(\ex) + \beta^p)^{1/p}}{\ell_{q,\eta}(\ex) } \ge \frac{\beta}{\eta}
\end{align}
that is 
\begin{multline}\label{e:ineqglobopt}
\left(1+\sum_{n=1}^{N} \frac{\alpha^p}{\beta^p}\Big(\big(\frac{z_{n}}{\alpha^{2}} + 1\big)^{p/2}  - 1\Big)\right)^{2/p}\\
\ge \left( 1 + \sum_{n=1}^N \frac{z_n^{q/2}}{\eta^q} \right)^{2/q}
 \end{multline}
 by setting, for every $n\in\{1,\ldots,N\}$, $z_{n}= x_n^2$. Let $\epsilon \in ]0,+\infty[$. According to the second-order mean value theorem,
 \begin{equation}
 (\forall \upsilon \in [0,\epsilon])\quad (\upsilon+1)^{p/2}-1 \ge \frac{p\upsilon}{2} \Big(1-\frac{2-p}{4}\epsilon\Big).
 \end{equation}
 On the other hand, since $\upsilon \mapsto \big((\upsilon+1)^{p/2}-1\big)/\upsilon^{p/2}$ is an increasing function on $]0,+\infty[$,
 \begin{equation}
 (\forall \upsilon \in [\epsilon,+\infty[)\quad (\upsilon+1)^{p/2}-1 \ge \frac{(\epsilon+1)^{p/2}-1}{\epsilon^{p/2}} \upsilon^{p/2}.
 \end{equation}
 In the following, we will assume that $\epsilon< 4/(2-p)$. Let
 \begin{equation}
 I = \{n \in \{1,\ldots,N\}\mid z_{n}< \epsilon \alpha^2\}
 \end{equation}
 and let $\overline{I} = \{1,\ldots,N\} \setminus I$. Since $2/p > 1$,
 \begin{align}
 &\left(1+\sum_{n=1}^{N} \frac{\alpha^p}{\beta^p}\Big(\big(\frac{z_{n}}{\alpha^{2}} + 1\big)^{p/2}  - 1\Big)\right)^{2/p}\nonumber\\
&\quad\ge \left(1+\sum_{n\in I} \frac{\alpha^p}{\beta^p}\Big(\big(\frac{z_{n}}{\alpha^{2}} + 1\big)^{p/2}  - 1\Big)\right)^{2/p}\nonumber\\
&\quad\;\;\;+ \left(\sum_{n\in \overline{I}} \frac{\alpha^p}{\beta^p}\Big(\big(\frac{z_{n}}{\alpha^{2}} + 1\big)^{p/2}  - 1\Big)\right)^{2/p}\nonumber\\
&\quad\ge 1+\sum_{n\in I} \frac{p\alpha^{p-2}}{2\beta^p} \Big(1-\frac{2-p}{4}\epsilon\Big)z_{n}\nonumber\\
&\quad\;\;\;+ \left(\sum_{n\in \overline{I}} \frac{(\epsilon+1)^{p/2}-1}{\epsilon^{p/2}\beta^p} z_{n}^{p/2}\right)^{2/p}.
\end{align}
If
\begin{align}
&\frac{p\alpha^{p-2}}{2\beta^p} \Big(1-\frac{2-p}{4}\epsilon\Big) \eta^2\ge 1\label{e:compcondglob1}\\
&\frac{\big((\epsilon+1)^{p/2}-1\big)^{2/p}}{\epsilon\beta^2} \eta^{2}\ge 1, \label{e:compcondglob2}
\end{align}
then
 \begin{multline}\label{e:globprevtrinqpre}
 \left(1+\sum_{n=1}^{N} \frac{\alpha^p}{\beta^p}\Big(\big(\frac{z_{n}}{\alpha^{2}} + 1\big)^{p/2}  - 1\Big)\right)^{2/p}\\
\ge 1 + \sum_{n\in I} \frac{z_n}{\eta^{2}} +\left(\sum_{n\in \overline{I}} \frac{z_n^{p/2}}{\eta^{p}} \right)^{2/p}.
\end{multline}
In addition, as $p/2< 1\le q/2$,
\begin{align}
&\Big(1 + \sum_{n\in I} \frac{z_n}{\eta^{2}}\Big)^{q/2} \ge 1 + \sum_{n\in I} \frac{z_n^{q/2}}{\eta^q}\\
& \left(\sum_{n\in \overline{I}} \frac{z_n^{p/2}}{\eta^p} \right)^{2/p} \ge \left(\sum_{n\in \overline{I}} \frac{z_n^{q/2}}{\eta^q} \right)^{2/q},
\end{align}
where the last inequality follows from \eqref{eq_lp-lq}.
This yields
 \begin{multline}\label{e:globprevtrinq}
 \left(1+\sum_{n=1}^{N} \frac{\alpha^p}{\beta^p}\Big(\big(\frac{z_{n}}{\alpha^{2}} + 1\big)^{p/2}  - 1\Big)\right)^{2/p}\\
\ge \left( 1 + \sum_{n\in I} \frac{z_n^{q/2}}{\eta^q} \right)^{2/q}+\left(\sum_{n\in \overline{I}} \frac{z_n^{q/2}}{\eta^q} \right)^{2/q}.
\end{multline}
We deduce Inequality \eqref{e:ineqglobopt} by applying the triangle inequality for the $\ell^{q/2}$ norm to the right-hand side of \eqref{e:globprevtrinq}.
The provided condition in \eqref{e:simpcondglob} corresponds to the choice $\epsilon = 1$ in \eqref{e:compcondglob1}-\eqref{e:compcondglob2}.

\section{Proof of Proposition \ref{proposPsi}}
\label{Proof1}
(i) Let us first show that $\Psi$ is Lipschitz-differentiable\ldr{by investigating}{. To do so, we will investigate} the properties of $\nabla^2 \Psi$.
We start by studying the behavior of $|||\nabla^2 \Psi_1(\ex)|||$, where $\ex\in \mathbb{R}^N$ and the spectral norm is denoted by $|||.|||$.
Using \eqref{e:D2ellpx} and \eqref{e:D2phi1}, we obtain
\begin{align}
\allx{}
|||\nabla^2 \Psi_1(\ex)||| & \leq   \frac{ |||\DD{(\mathbf{Z})}|||}{ \ell_{p,\alpha}^p(\ex)+\beta^p} +\frac{p \|\mathbf{Y}\|^2}{\left( \ell_{p,\alpha}^p(\ex)+\beta^p \right)^2} 
\end{align}
where we make use of the shorter notation:
\begin{equation}
\begin{cases}
\mathbf{Y} = \left( x_n (x_n^2 + \alpha^2)^{\frac{p}{2}-1}\right)_{1\leq n \leq N}\\
\mathbf{Z} = \left(\left((p-1)x_n^2+\alpha^2\right) (x_n^2+\alpha^2)^{\frac{p}{2}-2}\right)_{1\le n \le N}
\end{cases}
\end{equation}
First, we have 
\begin{eqnarray}
\lefteqn{\frac{ |||\DD (\mathbf{Z})|||}{\ell_{p,\alpha}^p(\ex)+\beta^p}} \nonumber  \\
& = & \frac{1}{ \ell_{p,\alpha}^p(\ex)+\beta^p} \nonumber \\
& &  \times \sup_{1\le n \le N} \left(\left|(p-1)x_n^2+\alpha^2 \right| (x_n^2+\alpha^2)^{\frac{p}{2}-2}\right).
\end{eqnarray}
Since $p<2$ and, for every $n\in \{1,\ldots,N\}$,
\begin{equation}
|(p-1)x_n^2+\alpha^2|\le x_{n}^{2}+\alpha^2,
\end{equation}
we deduce that
\begin{eqnarray}
\lefteqn{\frac{ |||\DD (\mathbf{Z})|||}{\ell_{p,\alpha}^p(\ex)+\beta^p}} \nonumber  \\
& \leq & \frac{1}{ \beta^p}  \sup_{1\le n \le N} \Big((p-1)(x_n^2+\alpha^2)^{\frac{p}{2}-1} + \alpha^2(x_n^2+\alpha^2)^{\frac{p}{2}-2}\Big)
\nonumber\\ 
& \leq & p\frac{\alpha^{p-2}}{\beta^p}.
\label{eq:Lip1Psi1}
\end{eqnarray}
Besides, by setting $\nu = \sum_{n=1}^N \left(x_n^2 + \alpha^2\right)^{p/2}$,
\begin{align}
\frac{\|\mathbf{Y}\|^2}{\left( \ell_{p,\alpha}^p(\ex)+\beta^p \right)^2} &= \frac{1}{\left( \ell_{p,\alpha}^p(\ex)+\beta^p \right)^2} \sum_{n=1}^N x_n^2 (x_n^2+\alpha^2)^{p-2}\nonumber\\
 &\le \frac{1}{\left( \ell_{p,\alpha}^p(\ex)+\beta^p \right)^2} \sum_{n=1}^N \frac{x_n^2}{(x_n^2+\alpha^2)^{2}}(x_n^2+\alpha^2)^{p}\nonumber\\
 &\le \frac{1}{2\alpha^{2}\left( \ell_{p,\alpha}^p(\ex)+\beta^p \right)^2} \sum_{n=1}^N (x_n^2+\alpha^2)^{p}\nonumber\\
&\le  \frac{1}{2\alpha^2\left( \ell_{p,\alpha}^p(\ex)+\beta^p \right)^2}  \Big(\sum_{n=1}^N  (x_n^2+\alpha^2)^{p/2}\Big)^2 \nonumber\\
&= \frac{\nu^2}{2\alpha^2(\nu-N\alpha^p+\beta^p)^2} \nonumber\\
& =  \frac{1}{2\alpha^2} \Big(1+\frac{N\alpha^p-\beta^p}{\nu-N\alpha^p+\beta^p}\Big)^{2}\nonumber\\
& \le \frac{1}{2\alpha^2} \max\Big\{1,\Big(\frac{N\alpha^p}{\beta^p}\Big)^{2}\Big\}.
\label{eq:Lip2Psi1}
\end{align}
These results prove that $\nabla^2 \Psi_1$ is bounded

Let us now study the Hessian of $\Psi_2$ at $\ex \in \eR^N$. Let $\epsilon \in ]0,+\infty[$, let
\begin{equation}\label{e:defLambda}
\boldsymbol{\Lambda}_\epsilon = \frac{\nabla^2 \ell_{q,\eta}^q(\ex) + q(q-1)\epsilon  \boldsymbol{I}_N}{\ell_{q,\eta}^q(\ex)},
\end{equation}
and let
\begin{equation}
\nabla^2_\epsilon \Psi_2(\ex)  = \frac{1}{q}\left( \boldsymbol{\Lambda}_\epsilon - \frac{ \nabla \ell_{q,\eta}^q(\ex) \left(\nabla \ell_{q,\eta}^q(\ex) \right)^\top}{\ell_{q,\eta}^{2q}(\ex)}  \right)
\end{equation}
By continuity,
\begin{equation}
\label{e:contD2phi2eps}
\lim_{\epsilon \to 0} |||\nabla^2_\epsilon \Psi_2(\ex)||| = |||\nabla^2 \Psi_2(\ex)|||
\end{equation}
On the other hand, since $\boldsymbol{\Lambda}_\epsilon$ is a positive definite matrix, 
\begin{equation}
\nabla^2_\epsilon \Psi_2(\ex) = \frac{1}{q}  \boldsymbol{\Lambda}_\epsilon^{1/2} (\boldsymbol{I}_N - \mathbf{v}_\epsilon \mathbf{v}_\epsilon^\top) \boldsymbol{\Lambda}_\epsilon^{1/2}
\end{equation}
where, by using \eqref{e:D1ellqx},
\begin{align}
\label{eq:vepsilon}
&\mathbf{v}_{\epsilon}\nonumber\\
& = \boldsymbol{\Lambda}_{\epsilon}^{-1/2} \frac{\nabla \ell_{q,\eta}^q(\ex)}{\ell_{q,\eta}^{q}(\ex)} \nonumber\\
& = \sqrt{\frac{q}{q-1}} \frac{1}{\ell_{q,\eta}^{q/2}(\ex)}\Big[\frac{\operatorname{sign}(x_1)|x_1|^{q-1}}{\sqrt{|x_1|^{q-2}+\epsilon}},\ldots,\frac{\sign(x_N)|x_N|^{q-1}}{\sqrt{|x_N|^{q-2}+\epsilon}}\Big]^\top.
\end{align}
Therefore,
\begin{align}\label{e:normnabla2Psi2eps}
|||\nabla^2_\epsilon \Psi_2(\ex)||| &=||| \frac{1}{q}  \boldsymbol{\Lambda}_\epsilon^{1/2} (\boldsymbol{I}_N - \mathbf{v}_\epsilon \mathbf{v}_\epsilon^\top) \boldsymbol{\Lambda}_\epsilon^{1/2} |||\nonumber\\
& \le \frac{1}{q} |||  \boldsymbol{\Lambda}_\epsilon |||\, ||| \boldsymbol{I}_N - \mathbf{v}_\epsilon \mathbf{v}_\epsilon^\top |||.
\end{align}
According to \eqref{e:defLambda},
\begin{align}
\boldsymbol{\Lambda}_\epsilon =
 &  \frac{q(q-1)}{\ell_{q,\eta}^q(\ex)}\,\DD{\left((|x_n|^{q-2}+\epsilon)_{1 \leq n \leq N}\right)}.
\end{align}
Consequently,
\begin{align}
||| \boldsymbol{\Lambda}_\epsilon ||| &=  \frac{q(q-1)}{\ell_{q,\eta}^{q}(\ex)}  \, \sup_{1\le n \le N} (|x_n|^{q-2}+\epsilon).
\end{align}
We thus derive from \eqref{e:normnabla2Psi2eps} that
\begin{align}
|||\nabla^2_\epsilon \Psi_2(\ex)||| 
& = \frac{q-1}{\ell_{q,\eta}^{q}(\ex)} \sup_{1\le n \le N} (|x_n|^{q-2}+\epsilon)\nonumber\\
& \quad  \quad \times \max\{1, \|\mathbf{v}_\epsilon\|^2-1\}
\end{align}
As $\epsilon\to 0$, \eqref{e:contD2phi2eps} yields
\begin{equation}
|||\nabla^2 \Psi_2(\ex)||| \le \frac{q-1}{\ell_{q,\eta}^{q}(\ex)} \sup_{1\le n \le N} |x_n|^{q-2} \;\max\{1, \|\mathbf{v}\|^2-1\}
\end{equation}
where, according to \eqref{eq:vepsilon},
\begin{align}
\|\mathbf{v}\|^2 = \lim_{\epsilon \to 0} \|\mathbf{v}_\epsilon\|^2 = \frac{q}{q-1} \frac{\sum_{n=1}^N |x_n|^q}{\ell_{q,\eta}^{q}(\ex)}
\end{align}
which is equivalent to
\begin{align}
\|\mathbf{v}\|^2 -1 
&=\frac{1}{q-1} \Big(1-\frac{q \eta^q}{\ell_{q,\eta}^{q}(\ex)}\Big) 
\end{align}
Since $\Big(1-\frac{q \eta^q}{\ell_{q,\eta}^{q}(\ex)}\Big) < 1$ and $\frac{1}{q-1} < 1$ for all $q \in [2, +\infty[$, we deduce that
$\|\mathbf{v}\|^2 -1  < 1.$
Resultingly,
\begin{align}
|||\nabla^2 \Psi_2(\ex)||| &\le \frac{q-1}{\ell_{q,\eta}^{q}(\ex)} \sup_{1\le n \le N} |x_n|^{q-2} \nonumber\\
&= \frac{q-1}{(\eta^q+\sum_{n=1}^N |x_n|^q)^{2/q}} \left(\frac{\sup_{1\le n \le N} |x_n|^{q}}{\eta^q+\sum_{n=1}^N |x_n|^q}\right)^{\frac{q-2}{q}}\nonumber\\
& \leq \frac{q-1}{(\eta^q+\sum_{n=1}^N |x_n|^q)^{2/q}} \label{e:majnormD2phi2pre} \\
& \leq \frac{q-1}{\eta^2} \label{e:majnormD2phi2}.
\end{align}
\ldr{F}{It can be concluded f}rom the boundedness of $\nabla^2 \Psi_1$ and $\nabla^2 \Psi_2$\ldr{, }{that} $\nabla^2 \Psi = \nabla^2 \Psi_1 - \nabla^2 \Psi_2$ is bounded, hence $\Psi$ is a Lipschitz-differentiable and
\begin{multline}
||| \nabla^2 \Psi(\ex) ||| = ||| \nabla^2 \Psi_1(\ex) - \nabla^2 \Psi_2(\ex) ||| 
\\ \leq ||| \nabla^2 \Psi_1(\ex) ||| + ||| \nabla^2 \Psi_2(\ex) |||.
\end{multline}
Using \eqref{eq:Lip1Psi1}, \eqref{eq:Lip2Psi1}, and \eqref{e:majnormD2phi2}, we conclude that
\begin{equation}
||| \nabla^2 \Psi(\ex) ||| \leq p \frac{\alpha^{p-2}}{\beta^p} +  \frac{p}{2\alpha^2} \max\Big\{1,\Big(\frac{N\alpha^p}{\beta^p}\Big)^{2}\Big\} + \frac{q-1}{\eta^2},\nonumber
\end{equation}
hence $\Psi$ is Lipschitz differentiable with constant $L$ as in \eqref{eq:LipSPOQ}.

(ii) Let us now prove that $\Psi$ satisfies the majorization inequality \eqref{e:majloc}. By noticing that  $\xi \mapsto (\xi+\alpha^2)^{p/2}$ is a concave function, it follows from standard majorization properties \cite{Hunter_D_2004_j-american-statistician_tutorial_mma} that for every 
$(\ex',ex) \in (\eR^N)^2$, and $ n \in \{1,\ldots,N\}$:
\begin{align}
(x_n'^2+\alpha^2)^{p/2} \leq &  (x_n^2+\alpha^2)^{p/2} + p x_n (x_n^2+\alpha^2)^{p/2-1} (x_n'-x_n) \nonumber \\
 & + \frac{p}{2} (x_n^2+\alpha^2)^{p/2-1} (x'_n-x_n)^2.
\end{align}
As a consequence,
\begin{equation}
\ell_{p,\alpha}^p(\ex') \le \ell_{p,\alpha}^p(\ex)+ (\ex'-\ex)^\top\nabla  \ell_{p,\alpha}^p(\ex)+\frac{p}{2} (\ex'-\ex)^\top \eA_1(\ex) (\ex'-\ex)\nonumber
\end{equation}
where
$\eA_1(\ex) = \DD \left( (x_n^2+\alpha^2)^{p/2-1}\big)_{1\le n \le N} \right)$. 
By using the Napier inequality expressed as
\begin{equation}
\big(\forall (u,v) \in ]0,+\infty[^2) \qquad  \log u \le \log v + \frac{u-v}{v},
\end{equation}
we get
\begin{align}
\Psi_1(\ex') \leq & \Psi_1(\ex)+(\ex'-\ex)^\top \nabla\Psi_1(\ex) \nonumber \\
& + \frac{1}{2(\ell_{p,\alpha}^p(\ex)+\beta^p)}(\ex'-\ex)^\top \eA_1(\ex) (\ex'-\ex).
\label{e:quadmajphi1}
\end{align}
By applying the descent lemma to function $-\Psi_2$, and using \eqref{e:majnormD2phi2pre} we obtain
\begin{align}
\big(\forall (\ex,\ex')\in \overline{\BB}_{q,\rho}^2\big) \quad -\Psi_2(\ex') & \leq -\Psi_2(\ex) - (\ex'-\ex)^\top \nabla\Psi_2(\ex) \nonumber \\
& + \frac{\chi_{q,\rho}}{2} \|\ex'-\ex\|^2.
\label{e:quadmajphi2}
\end{align}
The majorization property is then derived from \eqref{e:quadmajphi1} and \eqref{e:quadmajphi2}. 
\eca{The inequality \eqref{eq:maj_bnd} can be deduced in a straightforward manner, by noticing that both $\ell_{p,\alpha}^p(\ex)$ and $(x_n^2+\alpha^2)^{p/2-1}$ are minimal for $\ex = \zerob_N$.} 



\section*{Acknowledgment}
\ldr{The authors thank Prof. Audrey Repetti (Heriot-Watt University, UK) for initial motivations and thoughtful discussions, and Prof. Marc-André Delsuc (University of Strasbourg, France), for helping with MS  problem modeling.}{The authors thank Prof. Audrey Repetti from Heriot-Watt University, UK, for providing initial motivation and thoughtful discussions on this work, \afa{and Prof. Marc-André Delsuc from the University of Strasbourg, France, for helping to modelize the MS problem.}}
\ifCLASSOPTIONcaptionsoff
  \newpage
\fi




\tiny
\bibliographystyle{IEEEtran}
\bibliography{abbr,Reference_Base_JabRef}

\end{document}